\newtheorem{thm}{Theorem}[section]
\newtheorem{prop}[thm]{Proposition}
\newtheorem{lem}[thm]{Lemma}
\newtheorem{cor}[thm]{Corollary}
\newtheorem{prob}[thm]{Problem}
\theoremstyle{definition}
\newtheorem{definition}[thm]{Definition}
\newtheorem{example}[thm]{Example}
\theoremstyle{remark}
\newtheorem{remark}[thm]{Remark}
\numberwithin{equation}{section}
\newcommand{\RR}{\mathbb{R}}
\newcommand{\ZZ}{\mathbb{Z}}
\DeclareMathOperator{\cl}{\mathrm{cl}}
\DeclareMathOperator{\scl}{\mathrm{scl}}
\newcommand{\bg}{g}
\newcommand{\hg}{\hat{g}}
\newcommand{\bG}{G}
\newcommand{\hG}{\hat{G}}
\newcommand{\Symp}{\mathrm{Symp}}
\newcommand{\Ham}{\mathrm{Ham}}
\newcommand{\Supp}{\mathrm{Supp}}
\newcommand{\flux}{\mathrm{Flux}}
\begin{document}

\title{$\hat{G}$-invariant  quasimorphisms and symplectic geometry of surfaces}

\author{Morimichi Kawasaki}
\address[Morimichi Kawasaki]{Research Institute for Mathematical Sciences, Kyoto University, Kyoto 606-8502, Japan}
\email{kawasaki@kurims.kyoto-u.ac.jp}

\author{Mitsuaki Kimura}
\address[Mitsuaki Kimura]{Graduate School of Mathematical Sciences, The University of Tokyo, Tokyo, 153-8914, Japan}
\email{mkimura@ms.u-tokyo.ac.jp}

\begin{abstract}
Let $\hat{G}$ be a group and $G$ its normal subgroup.
In this paper, we study $\hat{G}$-invariant quasimorphisms on $G$ which appear in symplectic geometry and low dimensional topology.
As its application, we prove the non-existence of a section of the flux homomorphism on closed surfaces of higher genus.
We also prove that Py's Calabi quasimorphism and Entov--Polterovich's partial Calabi quasimorphism are non-extendable to the group of symplectomorphisms.
We show that Py's Calabi quasimorphism is the unique non-extendable quasimorphism to some group.
\end{abstract}

\maketitle

\tableofcontents

\section{Introduction}
In this paper, we introduce and study the notions of $\hG$-invariant quasimorphism and $(\hG,\bG)$-commutator length.
Many examples in this paper come from the symplectic geometry.
See Section \ref{symplectic section} for notions in the symplectic geometry.
\subsection{$\hG$-invariant quasimorphism}
A real-valued function $\phi$ on a group $G$ is a \emph{quasimorphism}
if there exists a constant $C$ such that
\[|\phi(gh)-\phi(g)-\phi(h)| \leq C\]
for all $g,h \in G$.
Such the smallest $C$ is called the \emph{defect} of $\phi$ and denoted by $D(\phi)$.
A quasimorphism $\phi$ on $G$ is \emph{homogeneous} if $\phi(g^n)=n\phi(g)$ for all $g \in G$ and $n \in \ZZ$.
For a group $G$, let $Q(G)$ denote the real linear space of homogeneous quasimorphisms on $G$.

The main object we consider in this paper is \emph{$\hG$-invariant quasimorphism}.

\begin{definition}
  For a group $\hG$ and its normal subgroup $\bG$,
  we say that a quasimorphism $\phi \colon \bG \to \mathbb{R}$ on $\bG$ is \emph{$\hG$-invariant} if
  $\phi(\hg \bg \hg^{-1})=\phi(\bg)$ for all $\hg \in \hG$ and $\bg \in \bG$.
  The real linear space of $\hG$-invariant homogeneous quasimorphisms on $G$ is denoted by $Q(G)^{\hG}$.
\end{definition}

Quasimorphisms appear in dynamical systems as the rotation number, in symplectic topology as spectral invariants, in geometric group theory as a characterization of non-positively curved groups, in the theory of bounded cohomology and so on (see, e.g., \cite{Ca,Fr}).
$\hG$-invariant quasimorphisms on $\bG$ also appear in several contexts. For example,
\begin{itemize}
  \item
  Let $(M,\omega)$ be a symplectic manifold.
Let $\hG$ be the identity component of the group $\Symp_0(M,\omega)$ of symplectomorphisms and
$\bG$ the group $\Ham(M,\omega)$ of Hamiltonian diffeomorphisms \cite[et al.]{EP03, GG, Py06, Bra, BKS, FOOO}.
  \item $\hG$ is the mapping class group $\mathcal{M}(\Sigma)$ of a compact oriented surface $\Sigma$ with non-empty boundary and
  $\bG$ is the Torelli group $\mathcal{I}(\Sigma)$ of $\Sigma$ or the Johnson kernel $\mathcal{K}(\Sigma)$ of $\Sigma$ \cite{CHH}.
\end{itemize}

In \cite{BM}, Brandenbursky and Marcinkowski also studied a similar concept, \emph{Aut-invariant quasimorphism} (i.e., quasimorphism which is invariant under the automorphisms), for the free group $F_n$ of rank $n$ and the surface group $\Gamma_g$ of genus $g$.
Since $F_n$ and $\Gamma_g$ have trivial center, they are isomorphic to the inner automorphism group and regarded as a normal subgroup of the automorphism group ${\rm Aut}(F_n)$ and ${\rm Aut}(\Gamma_g)$. Therefore, these Aut-invariant quasimorphisms can be seen as ${\rm Aut}(F_n)$-invariant quasimorphisms and ${\rm Aut}(\Gamma_g)$-invariant quasimorphisms in our sence.


In the present paper, we provide some observations and applications of $\hat{G}$-invariant quasimorphisms.


\subsection{Bavard-type duality theorem}
For a group $G$, let $\cl_G$ denote the commutator length on $[G,G]$ and \emph{the stable commutator length} $\scl_G$ is defined by
$\scl_G(x)=\lim_{n \to \infty} \cl_G(x^n)/n$ for $x \in [G,G]$.
As written in Calegari's famous book \cite{Ca}, quasimorphism and $\scl$ have a very deep relation.
The following Bavard's duality is a symbolic theorem of that relation.
\begin{thm}[\cite{Bav}]\label{original Bavard}
  Let $G$ be a group. For any $x \in [G,G]$,
  \[ \scl_{G}(x) = \sup_{\phi \in Q(G)}  \frac12 \frac{|\phi(x)|}{D(\phi)}. \]
\end{thm}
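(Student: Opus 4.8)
The plan is to prove the two inequalities of Bavard's formula separately, writing $\ell(x)$ for the supremum on the right-hand side (a homomorphism $\phi$ may be ignored, since $D(\phi)=0$ forces $\phi(x)=0$ for $x\in[G,G]$). For the inequality $\scl_G(x)\ge\ell(x)$, fix $\phi\in Q(G)$ with $D(\phi)>0$. Because $\phi$ is homogeneous one first records the standard facts that $\phi(g^{-1})=-\phi(g)$ and that $\phi$ is conjugation invariant, which together give $|\phi([a,b])|=|\phi(aba^{-1}b^{-1})-\phi(aba^{-1})-\phi(b^{-1})|\le D(\phi)$. Writing $x=\prod_{i=1}^{k}[a_i,b_i]$ with $k=\cl_G(x)$ and telescoping the quasimorphism inequality yields $|\phi(x)|\le\sum_{i=1}^{k}|\phi([a_i,b_i])|+(k-1)D(\phi)\le(2k-1)D(\phi)$, hence $\cl_G(x)\ge\frac12\bigl(|\phi(x)|/D(\phi)+1\bigr)$. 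Applying this to $x^n$, using $\phi(x^n)=n\phi(x)$, dividing by $n$ and letting $n\to\infty$ gives $\scl_G(x)\ge\frac12|\phi(x)|/D(\phi)$; taking the supremum over $\phi$ gives $\scl_G(x)\ge\ell(x)$.

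For the reverse inequality $\scl_G(x)\le\ell(x)$ --- the real content --- I would argue by Hahn--Banach duality, realizing $\scl_G$ as a fixed multiple of a norm whose dual is $Q(G)$ modulo homomorphisms, with the defect as dual norm. Concretely: let $C_*$ be the real bar complex of $G$, let $B_1\subset C_1$ be the space of $1$-boundaries, and let $B_1^H$ be the quotient of $B_1$ by the subspace spanned by the elements $g+g^{-1}$, $g^n-ng$, and $hgh^{-1}-g$ (each of which lies in $B_1$); the $\ell^1$ filling seminorm on $B_1$ induces a seminorm on $B_1^H$. A chain-to-surface counting argument identifies $\scl_G(x)$ with a universal multiple of the $B_1^H$-seminorm of the class of $x$ (here one uses that $x\in[G,G]$, so $x$ is already a $1$-boundary). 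On the dual side, a functional on $B_1^H$ bounded in this seminorm lifts to a real function on $G$ whose coboundary is bounded --- that is, to a quasimorphism --- and vanishing on $g^n-ng$ and on $hgh^{-1}-g$ forces it to be homogeneous; one checks its dual norm equals its defect exactly. Applying Hahn--Banach to the class of $x$ then produces a homogeneous quasimorphism $\phi$ realizing the seminorm, and hence, after accounting for the multiplicative constant, the inequality $\scl_G(x)\le\frac12|\phi(x)|/D(\phi)\le\ell(x)$; an approximating sequence handles the case where the supremum is not attained.

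The main obstacle is this reverse direction, and two points inside it need genuine care: first, setting up $B_1^H$ and its seminorm so that it really equals $\scl_G$ up to a fixed constant --- this is the step converting ``$x^n$ is a product of $k$ commutators'' into an efficient rational filling chain, and it is also where one must verify that restricting to \emph{homogeneous} (equivalently, conjugation-invariant) quasimorphisms does not lower the optimal ratio; and second, checking that the functional handed back by Hahn--Banach, read as a function on $G$, is indeed a quasimorphism whose defect is controlled by the reciprocal of the seminorm --- i.e., that the chain-level filling estimate is \emph{equivalent} to the defect bound for $\phi$, with no loss. By contrast, the easy direction, the existence of the limit defining $\scl_G$, the elementary identities for homogeneous quasimorphisms, and the tracking of universal constants are all routine.
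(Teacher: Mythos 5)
The paper does not prove Theorem~\ref{original Bavard} at all; it cites Bavard \cite{Bav} and instead proves the generalization Theorem~\ref{thm:bavard} in Section~\ref{bavard section}, so the fair comparison is against that proof (which, for $\hG=\bG=G$ perfect, specializes to Bavard's theorem). Your easy direction is exactly the paper's Lemma~\ref{lem:lower_bound}: the conjugation-invariance and antisymmetry of a homogeneous quasimorphism give $|\phi([a,b])|\le D(\phi)$, and telescoping over a minimal commutator decomposition of $x^n$ and stabilizing yields $\scl_G(x)\ge \tfrac12|\phi(x)|/D(\phi)$. For the hard direction you and the paper both run a Hahn--Banach duality argument, but through genuinely different normed spaces. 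You use the bar-complex route of Calegari \cite{Ca}: form $B_1^H(G)$, the space of real $1$-boundaries modulo $g+g^{-1}$, $g^n-ng$, $hgh^{-1}-g$, with the $\ell^1$ filling (Gersten boundary) seminorm, identify $\scl_G$ as a universal multiple of that seminorm by a rational-surface/filling argument, and identify the dual isometrically with $Q(G)/H^1(G;\RR)$ carrying (half) the defect. The paper instead follows Calegari--Zhuang \cite{CZ} via \cite{Ka17}: build the formal set $A_{\Gamma}$ of words $x_1^{s_1}\cdots x_k^{s_k}$ with real exponents, define the seminorm $\|\cdot\|_\Gamma$ directly as stabilized commutator length, pass to the quotient normed space $A$, apply Hahn--Banach, and then check by hand (Proposition~\ref{prop:G-inv_hqm}) that a bounded linear functional on $A$ restricted to $G$ is a homogeneous quasimorphism with $D(\phi)\le\tfrac12\|\tilde\phi\|^*$ --- here the commutator identity $(xy)^{2n}x^{-2n}y^{-2n}$ being a product of $n$ commutators replaces your filling lemma, and $\hG$-invariance (which you would have to impose on $B_1^H$ separately) falls out for free from $\cl_{\hG,\bG}([\hg,x^n])\le 1$. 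What the bar-complex route buys is the structural statement that the dual space \emph{is} $Q(G)/H^1$, whereas the Calegari--Zhuang route is more combinatorial, needs no topology or boundary-norm theory, and extends cleanly to the relative setting the paper actually needs. Your outline correctly flags the two hard steps (the $\scl$-versus-filling-norm identification and the dual-norm-equals-defect computation), and modulo those your proof goes through; do note that ``a functional on $B_1^H$ lifts to a function on $G$'' needs care, since $G$ is not a subset of $B_1$ --- one must argue via bounded $2$-cocycles and the exact sequence $H^1\to Q\to H^2_b\to H^2$, or mimic the paper's Proposition~\ref{prop:G-inv_hqm}.
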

It is a natural question what is to $\hG$-invariant quasimorphism is what $\scl$ is to quasimorphism.
To answer this question, we will show a Bavard-type duality for $\hG$-invariant quasimorphisms and a variant of commutator length.
We refer to an element of the form $[\hg,\bg]$, where $\hg \in \hG$ and $\bg \in \bG$, as a \emph{$(\hG,\bG)$-commutator}.
We define \emph{the $(\hG,\bG)$-commutator subgroup} $[\hG,\bG]$, \emph{the $(\hG,\bG)$-commutator length} $\cl_{\hG,\bG}$ and \emph{the stable $(\hG,\bG)$-commutator length} $\scl_{\hG,\bG}$ in the same way as the ordinary ones (see Section 2.1).

\begin{thm} \label{thm:bavard}
  Assume that $\bG=[\hG,\bG]$. For any $x \in [\hG,\bG]$,
  \[ \scl_{\hG,\bG}(x) = \sup_{\phi \in Q(G)^{\hG}} \frac12 \frac{|\phi(x)|}{D(\phi)}. \]
\end{thm}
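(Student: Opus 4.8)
The plan is to mimic Bavard's original argument, but with the commutator-length functional replaced by $\scl_{\hG,\bG}$ and the space of test functions restricted to $Q(G)^{\hG}$. The inequality $\scl_{\hG,\bG}(x) \ge \sup_{\phi} \frac12 |\phi(x)|/D(\phi)$ is the easy direction: if $\phi \in Q(G)^{\hG}$ and $x = \prod_{i=1}^n [\hg_i, \bg_i]$ with $\hg_i \in \hG$, $\bg_i \in \bG$, then $\hg_i \bg_i \hg_i^{-1} \in \bG$ by normality, so $\phi$ is defined on all the relevant elements and $\hG$-invariance gives $|\phi([\hg_i,\bg_i])| = |\phi(\hg_i \bg_i \hg_i^{-1} \cdot \bg_i^{-1})| \le D(\phi) + |\phi(\hg_i \bg_i \hg_i^{-1}) + \phi(\bg_i^{-1})| = D(\phi)$, since $\phi(\hg_i \bg_i \hg_i^{-1}) = \phi(\bg_i) = -\phi(\bg_i^{-1})$. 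Summing and using the defect bound on the product of $n$ terms gives $|\phi(x)| \le (2n-1)D(\phi)$; applying this to $x^m$, dividing by $m$ and letting $m \to \infty$ yields $|\phi(x)| \le 2\,\scl_{\hG,\bG}(x)\, D(\phi)$, hence the inequality. Here I would want to verify carefully that $\cl_{\hG,\bG}(x^m)$ grows at most linearly (so that $\scl_{\hG,\bG}$ is finite and the limit exists), which follows from subadditivity exactly as in the classical case.

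For the reverse inequality $\scl_{\hG,\bG}(x) \le \sup_{\phi} \frac12 |\phi(x)|/D(\phi)$, I would set up the appropriate Banach-space / Hahn--Banach framework. Let $B_{\hG,\bG}$ be the subspace of the space of $\bG$-valued (or rather, real) functions spanned by $(\hG,\bG)$-commutators, equipped with the "filling norm" coming from $\cl_{\hG,\bG}$; more precisely, one works on the quotient of the free $\RR$-vector space on $\bG$ modulo the subspace generated by $\bg h \bg^{-1} - \bg$ (for $\bg\in\bG$, $h\in\hG$... careful: $\hG$ acts, so mod out by $\hg \bg \hg^{-1} - \bg$ for all $\hg\in\hG$, $\bg\in\bG$) and by $\bg_1\bg_2 - \bg_1 - \bg_2$, and one shows that $\scl_{\hG,\bG}$ is (twice) the stable norm of the class of $x$ on this space. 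The key structural fact is that the dual of the relevant normed space is identified with $Q(G)^{\hG}$: a homogeneous quasimorphism that is $\hG$-invariant is exactly a bounded linear functional on this space of "$\hG$-coinvariant chains modulo homogenization," with operator norm comparable to $D(\phi)$. This is the $\hG$-equivariant analogue of the identification, in Bavard's proof, of $(\overline{B_1^H})^*$ with $Q(G)/H^1(G)$.

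Concretely I would: (i) define the space $C = C_{\hG,\bG}$ as above and a seminorm $\|\cdot\|$ on it induced by $\cl_{\hG,\bG}$, check it is a genuine norm on the quotient by its null space and that the class $[x]$ satisfies $\|[x]\| = \scl_{\hG,\bG}(x)$ up to the usual factor; (ii) show every $\phi \in Q(G)^{\hG}$ descends to a bounded functional on $(C,\|\cdot\|)$ with norm $\le D(\phi)$ (this uses $\hG$-invariance to kill the conjugation relations and homogeneity to kill the homogenization relations, exactly as in Calegari's book, Chapter 2); (iii) conversely, show a bounded functional on $C$ lifts to an element of $Q(G)^{\hG}$ with controlled defect — this is where one recovers a quasimorphism from a "defect-bounded" function, again following Bavard/Calegari but checking the construction preserves $\hG$-invariance (it does, since the defining cocycle relation is $\hG$-equivariant); (iv) apply Hahn--Banach to separate $[x]$ from the unit ball to produce the extremal $\phi$.

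The main obstacle I anticipate is step (i) together with the role of the hypothesis $\bG = [\hG,\bG]$: one must be sure that the "chain complex" being dualized is the right one so that its bounded cohomology / dual really is $Q(G)^{\hG}$ and not something larger or smaller, and in particular that $\scl_{\hG,\bG}$ — defined via decompositions into $(\hG,\bG)$-commutators, which is a more restrictive notion than ordinary commutators — matches the stable norm on that space. The assumption $\bG=[\hG,\bG]$ guarantees $\cl_{\hG,\bG}$ is everywhere finite on $\bG$, so there is no issue of infinite values, but I would need to check that no spurious relations (e.g. from the interplay of $\hG$-conjugation with products) force the norm to collapse, and that the homogenization step is compatible with $\hG$-invariance. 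A secondary technical point is making sure the supremum on the right-hand side is not over an empty or trivial space in degenerate cases; if $Q(G)^{\hG} = 0$ then one must separately argue $\scl_{\hG,\bG}(x) = 0$ for all $x$, which should again come out of the Hahn--Banach duality (the zero functional being the only one forces the stable norm to vanish).
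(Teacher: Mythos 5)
Your easy direction is essentially identical to the paper's Lemma~\ref{lem:lower_bound}, including the use of $\hG$-invariance to bound $|\phi([\hg,\bg])|\le D(\phi)$ and the telescoping defect estimate; that part is fine.

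For the hard direction you take a genuinely different route from the paper. The paper does not set up a filling norm on 1-boundaries at all: following \cite{CZ} and especially \cite{Ka17}, it builds the ``Zhuang space'' $A_\Gamma=\bigsqcup_k(\Gamma\times\RR)^k$ of formal real-exponent words $x_1^{s_1}\cdots x_k^{s_k}$ (with $\Gamma=[\hG,\bG]$), puts on it the seminorm $\|x_1^{s_1}\cdots x_k^{s_k}\|_\Gamma=\lim_n \frac1n\cl_{\hG,\bG}(x_1^{\lfloor s_1n\rfloor}\cdots x_k^{\lfloor s_kn\rfloor})$, which is \emph{directly} the stabilization of $\cl_{\hG,\bG}$, passes to the quotient $A$ by the null space, applies Hahn--Banach, and then proves (Proposition~\ref{prop:G-inv_hqm}) that any $\tilde\phi\in A^*$ gives a homogeneous quasimorphism $\phi(x)=\tilde\phi([x^1])$ with $D(\phi)\le\frac12\|\tilde\phi\|^*$. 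You instead propose the classical Bavard/Calegari picture: 1-boundaries of the bar complex, a filling norm, and an identification of the dual with $Q(G)^{\hG}$. That can be made to work, but it requires an extra nontrivial step the paper avoids, namely relating the filling norm of a $1$-boundary to the $(\hG,\bG)$-commutator length; the $A_\Gamma$ construction sidesteps 2-chains entirely by defining the norm from stable $\cl_{\hG,\bG}$ and then citing \cite[Prop.~2.1--2.3]{Ka17} for the linear-algebra of the quotient. The key new point in either route — that the quasimorphism produced is $\hG$-invariant — is in both cases the observation that $(\hg x\hg^{-1})^n x^{-n}=[\hg,x^n]$ is a \emph{single} $(\hG,\bG)$-commutator, so its stabilized $\cl_{\hG,\bG}$ vanishes; the paper uses this in Proposition~\ref{prop:G-inv_hqm}, and your step (iii) is gesturing at the same fact.

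One caveat: as literally written, your space is wrong. You say to quotient the free vector space $\RR[\bG]$ by the span of $\bg_1\bg_2-\bg_1-\bg_2$; but these elements span the full space of $1$-boundaries, so the quotient is $H_1(\bG;\RR)$, on which a genuine quasimorphism (not a homomorphism) does \emph{not} descend. In Bavard's argument one takes the \emph{subspace} $B_1$ spanned by such elements, then quotients $B_1$ by the homogenization relations $g^n-ng$ and the conjugation relations. Your parenthetical in (ii) (``homogeneity to kill the homogenization relations'') shows you know the correct relations, but the description in the paragraph above it conflates ``the subspace of boundaries'' with ``relations to mod out.'' If you carry out the Bavard-style proof you should fix this, and you will also need the analogue of the classical comparison between filling norm and commutator length adapted to $(\hG,\bG)$-commutators; the paper's choice of the $A_\Gamma$ model is precisely what makes that comparison unnecessary.
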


We use this theorem to prove Proposition \ref{section and scl comparison}. Since $\bG$ is a normal subgroup of $\hG$, we have $[\bG,\bG]<[\hG,\bG]<\bG$.
Thus, we note that $\bG=[\hG,\bG]$ if $\bG$ is perfect \textit{i.e.} $\bG=[\bG,\bG]$.

\subsection{Comparison with the ordinary commutator length}
We say that two functions $\nu$ and $\mu$ are \textit{equivalent} if there are positive constants $C_1$ and $C_2$ such that $C_1\mu \leq \nu \leq C_2 \mu$.
In \cite{CZ}, Calegari and Zhuang gave a concept of $W$-length generalizes the commutator length.
They proved that the stabilization of some $W$-lengths are equivalent to the stable commutator length \cite[Corollary 3.25]{CZ}.
In this paper, we consider a similar problem for our situation.
Namely, we compare our norm $\cl_{\hG,\bG}$ with the norms $\cl_{\hG}$ or $\cl_{\bG}$.

We can prove that the stabilizations of $\cl_{\hG,\bG}$ and $\cl_{\hG}$
are equivalent in the following situation.

\begin{prop}\label{section and scl comparison}
  Let $\bG$ be a normal subgroup of a group $\hG$. Assume that $\bG=[\hG,\bG]$.
  If there exists a section homomorphism of the qutient map $q\colon\hG \to \hG/\bG$ \textit{i.e.} there is a group homomorphism $s \colon \hG/\bG \to \hG$ such that $q\circ s=\mathrm{id}$, then
  \[\scl_{\hG}(x)\leq\scl_{\hG,\bG}(x) \leq 2\scl_{\hG}(x)\]
for any $x \in [\hG,\bG]$.
\end{prop}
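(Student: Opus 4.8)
The plan is to bound the two stable lengths against each other by comparing factorizations into ordinary commutators of $\hG$ with factorizations into $(\hG,\bG)$-commutators. The easy inequality is $\scl_\hG(x) \le \scl_{\hG,\bG}(x)$: every $(\hG,\bG)$-commutator $[\hg,\bg]$ is in particular an honest commutator in $\hG$ (since $\bG<\hG$), so $\cl_\hG(x^n)\le \cl_{\hG,\bG}(x^n)$ for all $n$, and dividing by $n$ and passing to the limit gives the claim. This step needs no hypothesis beyond $\bG \triangleleft \hG$.

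For the reverse inequality $\scl_{\hG,\bG}(x)\le 2\scl_\hG(x)$, I would use the section $s\colon \hG/\bG \to \hG$ to build a retraction-type map. Write $r=s\circ q\colon \hG\to\hG$; it is a homomorphism with image $s(\hG/\bG)$, and for every $\hg\in\hG$ the element $\hg\cdot r(\hg)^{-1}$ lies in $\bG=\ker q$. Now take $x\in[\hG,\bG]$ and a factorization $x^n = \prod_{i=1}^{k}[a_i,b_i]$ with $a_i,b_i\in\hG$ and $k=\cl_\hG(x^n)$; note that since $x\in\bG$ we have $r(x)=1$, so applying $r$ to this identity gives $1=\prod_i[r(a_i),r(b_i)]$. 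The idea is to "correct" each factor: write $a_i = \alpha_i\, r(a_i)$ and $b_i=\beta_i\, r(b_i)$ with $\alpha_i,\beta_i\in\bG$, and expand $[a_i,b_i]$ using the standard commutator identities $[uv,w]=u[v,w]u^{-1}\,[u,w]$ and $[u,vw]=[u,v]\,v[u,w]v^{-1}$. Each resulting commutator has at least one entry conjugate into $\bG$ — i.e. is a conjugate of a $(\hG,\bG)$-commutator — except for the "pure" terms involving only $r(a_i),r(b_i)$, whose product is $1$ by the computation above. Collecting and using that $\cl_{\hG,\bG}$ is conjugation-invariant and subadditive, one gets $\cl_{\hG,\bG}(x^n)\le 2k + (\text{lower order})$, hence after stabilizing $\scl_{\hG,\bG}(x)\le 2\scl_\hG(x)$.

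The main obstacle is making the "correction" bookkeeping precise: each $[a_i,b_i]$ splits into four commutators of the shifted pieces, of which three are (conjugates of) $(\hG,\bG)$-commutators and one is $[r(a_i),r(b_i)]$; one must track that the product of the pure pieces over all $i$ is $1$ up to conjugations that can be absorbed, and that the number of genuinely new $(\hG,\bG)$-commutators introduced is at most $3k$ before any cancellation — so a naive count gives a worse constant than $2$. To reach the sharp factor $2$ I would instead argue on the level of $\scl$ via Bavard-type duality (Theorem \ref{thm:bavard}): every $\phi\in Q(G)^{\hG}$ with defect $D(\phi)$ extends (or can be compared) to a homogeneous quasimorphism on $\hG$ of defect at most $2D(\phi)$ by the standard averaging/induction argument using invariance under the $\bG$-action and the splitting $s$, giving $\frac{|\phi(x)|}{D(\phi)} \le 2\cdot\frac{|\psi(x)|}{D(\psi)}$ for a suitable $\psi\in Q(\hG)$; taking suprema and applying Bavard's duality (Theorem \ref{original Bavard}) on $\hG$ together with Theorem \ref{thm:bavard} on the pair yields $\scl_{\hG,\bG}(x)\le 2\scl_\hG(x)$. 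I expect the clean route to be this dual one, with the factor $2$ coming from the defect estimate $D(\psi)\le 2D(\phi)$, and the chain-of-commutators argument serving only as a sanity check.
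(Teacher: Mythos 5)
Your final route is exactly the paper's: combine the Bavard-type duality of Theorem \ref{thm:bavard} for $\scl_{\hG,\bG}$ with the extension result (Proposition \ref{prop:section_hom homog}, which gives $D(\hat\phi)\leq 2D(\phi)$ via composing with the section and then homogenizing) and then apply classical Bavard duality on $\hG$ to obtain the factor $2$. Your initial direct chain-of-commutators attempt is a reasonable sanity check but, as you correctly note, yields a worse constant and is abandoned; the paper skips it entirely, and also remarks that the duality argument is the only one they know, leaving the question of whether the \emph{unstabilized} lengths are equivalent open.
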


Because we use Theorem \ref{thm:bavard} to prove Proposition \ref{section and scl comparison}, the authors do not know whether $\cl_{\hG,\bG}$ and $\cl_{\hG}$ (not stabilized) are equivalent or not.

\begin{example}\label{braid and index hom}

Let $\hG$ be the braid group $B_n$ of $n$ strands and $\bG$ its commutator subgroup $[B_n,B_n]$. For any integer $n>4$, $\bG$ is a perfect group \cite{GL}, especially $\bG=[\hG,\bG]$.
It is known that $\hG/\bG \cong \ZZ$ and the abelianization map $\hG \to \hG/\bG$ is given by
the index sum homomorphism $\hG \to \ZZ$ defined by $\sigma_i \mapsto 1$ for $i=1,2,\dots,n-1$,
where $\sigma_i$ is the $i$th Artin generator. Since there is a section homomorphism $s\colon \ZZ \to \hG$,
the pair $(\hG,\bG)$ satisfies the assumptions of Proposition \ref{section and scl comparison} if $n>4$.

\end{example}

\begin{example}\label{exact and Calabi hom}
Let $(M,\omega)$ be an exact symplectic manifold.
Let $\hG$ be the group $\Ham(M,\omega)$ of Hamiltonian diffeomorphisms and $\bG$ the commutator subgroup of $\Ham(M,\omega)$.
Let $\mathrm{Cal}\colon\Ham(M,\omega)\to\RR$ denote the Calabi homomorphism (See Section \ref{symplectic section}).

It is known that $\hG/\bG \cong \RR$ and the abelianization map $\hG \to \hG/\bG$ is given by the Calabi homomorphism \cite{Ban}.
We can take a time-independent Hamiltonian function $H\colon M\to\RR$ such that $\mathrm{Cal}(H)=1$ (for instance, consider a function supported on a Darboux ball).
Then, the map $s\colon\RR\to\Ham(M,\omega)$ defined by $s(t)=\varphi_{tH}$ is a section homomorphism of $\mathrm{Cal}$.
Since it is known that $\bG$ is a perfect group (\cite{Ban}),
the pair $(\hG,\bG)$ satisfies the assumptions of Proposition \ref{section and scl comparison}.
\end{example}

\begin{example}\label{torus section}
Let $T^2$ be a 2-dimensional torus and $\omega$ a symplectic form on $T^2$ such that $\int_{T^2}\omega=1$.
Let $\hG$ be the identity component $\Symp_0(T^2,\omega)$ of the group of symplectomorphisms of $(T^2,\omega)$ and $\bG$ the group $\Ham(T^2,\omega)$ of Hamiltonian diffeomorphisms of $(T^2,\omega)$.
Let $\flux_\omega \colon \Symp_0(T^2,\omega) \to H^1(T^2 ; \RR)/H^1(T^2 ; \ZZ)$ be the (descended) flux homomorphism (See Section \ref{symplectic section}).
Then, $\mathrm{Ker}(\flux_\omega)=\bG$ and $\bG$ is known to be perfect \cite{Ban}.
Thus, since there exists a section homomorphism of $\flux_\omega \colon \Symp_0(T^2,\omega) \to H^1(T^2 ; \RR)/H^1(T^2 ; \ZZ)$,
$\hG$ and $\bG$ satisfy the assumption of Proposition \ref{section and scl comparison}.
Hence, $\scl_{\hG,\bG}$ and $\scl_{\hG}$ are equivalent.
\end{example}


However, in the following example, $\scl_{\hG,\bG}(x)$ and $\scl_{\hG}$ are not equivalent.

\begin{thm}\label{two scl different on Ham}
Let $\Sigma$ be a closed orientable surface whose genus is larger than one and $\omega$ a symplectic form on $\Sigma$.
Set $\hG=\Symp_0(\Sigma,\omega)$ and $\bG=\Ham(\Sigma,\omega)$.
Then, there exists $f\in\bG$ such that $\scl_{\hG,\bG}(f)>0$ and $\scl_{\hG}(f)=0$.
\end{thm}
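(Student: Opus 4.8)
The plan is to prove the two halves $\scl_{\hG,\bG}(f)>0$ and $\scl_{\hG}(f)=0$ by two applications of Bavard-type duality, where $\hG=\Symp_0(\Sigma,\omega)$ and $\bG=\Ham(\Sigma,\omega)$. First note that $\bG$ is perfect (Banyaga \cite{Ban}), so $\bG=[\bG,\bG]\subseteq[\hG,\bG]\subseteq\bG$, whence $[\hG,\bG]=\bG$; thus Theorem~\ref{thm:bavard} applies on all of $\bG$, and since also $\bG\subseteq[\hG,\hG]$ the quantity $\scl_{\hG}$ is finite on $\bG$. One should also keep in mind that every $(\hG,\bG)$-commutator is an ordinary commutator in $\hG$, so $\scl_{\hG}\le\scl_{\hG,\bG}$ on $\bG$ automatically; the content of the theorem is to produce an $f$ on which the right-hand side is strictly positive while the left-hand side vanishes.

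For the lower bound I would invoke Py's homogeneous Calabi quasimorphism $\mu\colon\Ham(\Sigma,\omega)\to\RR$, which for a closed surface of genus $\ge 2$ is non-zero and $\Symp_0(\Sigma,\omega)$-invariant, i.e. $\mu\in Q(\bG)^{\hG}\setminus\{0\}$ (cf. \cite{Py06}). Picking $f\in\bG$ with $\mu(f)\ne 0$, Theorem~\ref{thm:bavard} gives
\[ \scl_{\hG,\bG}(f)=\sup_{\phi\in Q(\bG)^{\hG}}\frac{1}{2}\frac{|\phi(f)|}{D(\phi)}\ \ge\ \frac{|\mu(f)|}{2D(\mu)}>0. \]
Of course $f$ must be chosen compatibly with the upper bound, which is the delicate part.

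For the upper bound I would apply the ordinary Bavard duality (Theorem~\ref{original Bavard}) to $\hG$: since $f\in[\hG,\hG]$, one has $\scl_{\hG}(f)=\sup_{\psi\in Q(\hG)}\frac{1}{2}|\psi(f)|/D(\psi)$, and for $f\in\bG$, $\psi(f)=(\psi|_{\bG})(f)$ with $\psi|_{\bG}$ an \emph{extendable} element of $Q(\bG)^{\hG}$. Hence it suffices to choose $f$ with $\mu(f)\ne 0$ such that every extendable homogeneous quasimorphism on $\Ham(\Sigma,\omega)$ vanishes on $f$; this is exactly where the non-extendability of Py's quasimorphism — and the fact, established earlier in the paper, that it represents the unique non-extendable direction in $Q(\bG)^{\hG}$ — enters, since it makes the line $\RR\mu$ transverse to the extendable quasimorphisms and lets one find such an $f$; then the two computations give $\scl_{\hG,\bG}(f)>0=\scl_{\hG}(f)$. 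The hard part is precisely this construction of $f$, and the naive candidates do not work: a $(\hG,\bG)$-commutator $[\psi,g]$ with $\psi$ displacing $\Supp(g)$ (or a product of such with pairwise disjoint supports) has $\cl_{\hG}$ of its powers bounded, so $\scl_{\hG}=0$, but it also has $\mu$ equal to zero, by $\hG$-invariance of $\mu$ and additivity on commuting elements, so it fails the lower bound; and an $f$ supported in a displaceable subsurface is in general still detected by a quasimorphism of $\Symp_0(\Sigma,\omega)$ through its Calabi-type part. So the argument must genuinely rest on the structure of $Q(\Ham(\Sigma,\omega))^{\Symp_0(\Sigma,\omega)}$; if in addition $\scl_{\hG}$ were known to vanish identically on $\bG$, the upper bound would be immediate for any $f$ with $\mu(f)\ne 0$.
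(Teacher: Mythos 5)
Your framework for the lower bound is exactly the paper's: apply Theorem~\ref{thm:bavard} with Py's $\Symp_0(\Sigma,\omega)$-invariant Calabi quasimorphism $\mu_P$ to get $\scl_{\hG,\bG}(f)\ge |\mu_P(f)|/(2D(\mu_P))>0$ for a suitable $f$. But the proposal does not actually produce the element $f$, and for the upper bound it retreats to a reformulation (``every extendable quasimorphism vanishes on $f$'') that is equivalent to $\scl_{\hG}(f)=0$ by Bavard duality but gives no mechanism for constructing $f$. The appeal to the uniqueness statement (Corollary~\ref{Py unique}) cannot close the gap here: that result concerns only a small subgroup $\hG_0$, not all of $\Symp_0(\Sigma,\omega)$, and the paper derives it \emph{after} this theorem, so building the construction of $f$ on it would be circular. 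You correctly flag the missing step as ``the hard part,'' but leaving it open means the proof is incomplete.

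The paper's actual mechanism for the upper bound is elementary and does not invoke Bavard duality at all. It produces $f_0,g_0\in\Symp_0(\Sigma,\omega)$ (supported in an annulus neighbourhood of a non-separating curve and its $g_0$-translate) such that $f_0$ commutes with $g_0f_0^{-1}g_0^{-1}$; this forces $[f_0,g_0]^n=[f_0^n,g_0]$ for all $n$, so $\cl_{\hG}([f_0,g_0]^n)\le 1$ and $\scl_{\hG}([f_0,g_0])=0$ directly. Simultaneously $[f_0,g_0]=\varphi_F$ with $F$ a Hamiltonian supported in the annulus, and the Calabi property of $\mu_P$ on annuli (Proposition~\ref{Py calabi annulus}) gives $\mu_P([f_0,g_0])=\int_\Sigma F\omega>0$, so the lower bound applies to the same element. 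Note also that your dismissal of ``disjoint-support commutators'' misses this construction: the commuting factorization $\varphi_F=f_0\cdot(g_0f_0^{-1}g_0^{-1})$ is into elements of $\hG$ that are \emph{not} Hamiltonian (they have nonzero flux), so $\mu_P$ is not defined on the factors and the ``$\hG$-invariance plus additivity on commuting elements'' argument does not apply to kill $\mu_P([f_0,g_0])$. Finally, the closing speculation that $\scl_{\hG}$ might vanish identically on $\bG$ is unsupported; the paper establishes vanishing only for the specific element $[f_0,g_0]$, which is all that is needed.
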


By Proposition \ref{section and scl comparison}, Theorem \ref{two scl different on Ham} gives the following negative answer to a symplectic version of (Nielsen) realization problem.

\begin{cor}\label{section of flux}
Let $\Sigma$ be a closed orientable surface whose genus is larger than one and $\omega$ a symplectic form on $\Sigma$.
Then, there is no section homomorphism of the flux homomorphism $\flux_\omega\colon\Symp_0(\Sigma,\omega)\to H^1(\Sigma;\RR)$.
\end{cor}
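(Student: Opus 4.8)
The plan is to deduce the statement by contradiction from Theorem~\ref{two scl different on Ham} and Proposition~\ref{section and scl comparison}, so essentially the only work is to verify that the hypotheses of Proposition~\ref{section and scl comparison} are satisfied. Keeping the notation $\hG=\Symp_0(\Sigma,\omega)$ and $\bG=\Ham(\Sigma,\omega)$ of Theorem~\ref{two scl different on Ham}, I would first recall the standard facts: since $\Sigma$ is closed, $\mathrm{Ker}(\flux_\omega)=\bG$, and $\flux_\omega\colon\hG\to H^1(\Sigma;\RR)$ is surjective (so its flux group is trivial, which is why it is stated with target $H^1(\Sigma;\RR)$); moreover, for a closed surface of genus larger than one, $\bG=\Ham(\Sigma,\omega)$ is perfect (\cite{Ban}). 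From perfectness we get $\bG=[\bG,\bG]\subseteq[\hG,\bG]\subseteq\bG$, hence $\bG=[\hG,\bG]$; and from surjectivity with kernel $\bG$, the map $\flux_\omega$ induces an isomorphism $\hG/\bG\xrightarrow{\ \sim\ }H^1(\Sigma;\RR)$ through which the quotient map $q\colon\hG\to\hG/\bG$ is identified with $\flux_\omega$.

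Next, suppose for contradiction that $\flux_\omega$ admits a section homomorphism $s\colon H^1(\Sigma;\RR)\to\hG$, i.e.\ $\flux_\omega\circ s=\mathrm{id}$. Under the identification above, $s$ is precisely a section homomorphism of $q$, so Proposition~\ref{section and scl comparison} applies to the pair $(\hG,\bG)$ and yields
\[\scl_{\hG}(x)\leq\scl_{\hG,\bG}(x)\leq 2\,\scl_{\hG}(x)\]
for every $x\in[\hG,\bG]=\bG$. On the other hand, Theorem~\ref{two scl different on Ham} supplies an element $f\in\bG$ with $\scl_{\hG,\bG}(f)>0$ and $\scl_{\hG}(f)=0$. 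Taking $x=f$ in the displayed chain of inequalities gives $0<\scl_{\hG,\bG}(f)\leq 2\,\scl_{\hG}(f)=0$, which is absurd. Therefore no section homomorphism of $\flux_\omega$ exists.

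As for where the difficulty lies: the corollary itself is formal once Theorem~\ref{two scl different on Ham} is available, so the genuine obstacle is upstream, namely producing the element $f\in\Ham(\Sigma,\omega)$ with $\scl_{\hG,\bG}(f)>0$ yet $\scl_{\hG}(f)=0$ — one expects the positivity to be certified by a nonzero $\hG$-invariant homogeneous quasimorphism on $\bG$ via the Bavard-type duality of Theorem~\ref{thm:bavard} (Py's Calabi quasimorphism being the natural candidate), while the vanishing $\scl_{\hG}(f)=0$ must be checked inside the larger group $\Symp_0(\Sigma,\omega)$. Within the proof of the corollary proper, the only points demanding care are the ``standard facts'' invoked above: $\mathrm{Ker}(\flux_\omega)=\Ham(\Sigma,\omega)$, surjectivity of $\flux_\omega$ onto $H^1(\Sigma;\RR)$ (so that a section of $\flux_\omega$ is literally a section of $\hG\to\hG/\bG$), and perfectness of $\Ham(\Sigma,\omega)$ for genus greater than one — all of which are classical.
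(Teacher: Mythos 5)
Your proof is correct and is exactly the argument the paper intends: the paper states Corollary~\ref{section of flux} with the one-line remark that it follows from Proposition~\ref{section and scl comparison} and Theorem~\ref{two scl different on Ham}, and you have simply spelled out that contradiction argument, verifying the hypotheses ($\bG=[\hG,\bG]$ via perfectness, the identification of $q$ with $\flux_\omega$). One small slip worth noting: the triviality of the flux group $\Gamma_\omega$ is not a consequence of surjectivity of $\flux_\omega$, but of the contractibility of $\Symp_0(\Sigma,\omega)$ for genus $\geq 2$ (Earle--Eells plus Moser), which the paper records as $\widetilde{\Symp}_0(\Sigma,\omega)=\Symp_0(\Sigma,\omega)$.
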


For various versions of (Nielsen) realization problems by diffeomorphisms, \cite{MT} is a good survey.

Corollary \ref{section of flux} is slightly surprising because the following proposition is essentially proved by Fathi.
\begin{prop}[\cite{Fa}]\label{Fathi}
Let $M$ be an $n$-dimensional closed manifold and $\Omega$ a volume form on $M$.
Suppose that $n\geq3$ and there is a basis of $H_1(M;\RR)$ which is represented by embedded curves having tubular neighborhoods.
Then, there is a section homomorphism of the flux homomorphism $\flux_\Omega\colon \widetilde{\mathrm{Diff}}_0(M,\Omega)\to H^{n-1}(M;\RR)$.
\end{prop}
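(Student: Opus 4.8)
The plan is to represent a basis of $H^{n-1}(M;\RR)$ by the fluxes of $b:=\dim H_1(M;\RR)$ mutually commuting autonomous volume-preserving flows, each supported in a thin tube around one of the given basis curves for $H_1$, and then to bundle these flows into a single homomorphism $\RR^b\cong H^{n-1}(M;\RR)\to\widetilde{\mathrm{Diff}}_0(M,\Omega)$.

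First I would fix the geometry. Let $\gamma_1,\dots,\gamma_b$ be embedded curves representing a basis of $H_1(M;\RR)$; by Poincar\'e duality on the closed oriented manifold $M$, their duals $\mathrm{PD}[\gamma_1],\dots,\mathrm{PD}[\gamma_b]$ form a basis of $H^{n-1}(M;\RR)$. Since $n\geq 3$, two embedded $1$-manifolds in $M$ have negative expected intersection dimension $2-n$, so a $C^\infty$-small isotopy makes the $\gamma_i$ pairwise disjoint without changing their homology classes, and after shrinking the given tubular neighbourhoods I may assume they lie in pairwise disjoint tubes $N_i$. As $M$ and each $\gamma_i\cong S^1$ are orientable, the normal bundle of $\gamma_i$ is trivial, so $N_i\cong S^1\times D^{n-1}$.

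Next, in each $N_i$ I would choose a closed $(n-1)$-form $\beta_i$ supported in $N_i$ whose integral over a normal $D^{n-1}$-slice is nonzero, for instance the pullback of a compactly supported top-degree form on $D^{n-1}$. Since $H^{n-1}_c(N_i)$ is one-dimensional and its image in $H^{n-1}(M;\RR)$ is $\RR\cdot\mathrm{PD}[\gamma_i]$, the class $[\beta_i]$ is a nonzero multiple of $\mathrm{PD}[\gamma_i]$, so $\{[\beta_i]\}_{i}$ is again a basis of $H^{n-1}(M;\RR)$. Because $\Omega$ is nowhere zero there is a unique vector field $X_i$ with $\iota_{X_i}\Omega=\beta_i$; it is supported in $N_i$, and $\mathcal L_{X_i}\Omega=d\iota_{X_i}\Omega=d\beta_i=0$, so its flow $\{\phi^i_t\}_{t\in\RR}$ is a one-parameter subgroup of $\mathrm{Diff}_0(M,\Omega)$ and the flux of the element $\widetilde{\phi^i_t}\in\widetilde{\mathrm{Diff}}_0(M,\Omega)$ determined by the flow path equals $t\,[\beta_i]$.

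Finally I would assemble the section. Since the $N_i$ are pairwise disjoint the flows $\phi^i$ commute, so $(t_1,\dots,t_b)\mapsto\phi^1_{t_1}\cdots\phi^b_{t_b}$ is a group homomorphism $\RR^b\to\mathrm{Diff}_0(M,\Omega)$; as $\RR^b$ is simply connected it lifts to a homomorphism $\RR^b\to\widetilde{\mathrm{Diff}}_0(M,\Omega)$ sending the $i$-th coordinate flow to $t\mapsto\widetilde{\phi^i_t}$ (the obstruction to the lift being a homomorphism is a continuous, hence constant, map into the discrete fibre). Composing with the linear isomorphism $H^{n-1}(M;\RR)\cong\RR^b$ dual to the basis $([\beta_i])_i$ gives a homomorphism $s\colon H^{n-1}(M;\RR)\to\widetilde{\mathrm{Diff}}_0(M,\Omega)$, and since $\flux_\Omega$ is itself a homomorphism we get $\flux_\Omega(s(\sum_i t_i[\beta_i]))=\sum_i t_i\,[\beta_i]$, i.e.\ $\flux_\Omega\circ s=\mathrm{id}$. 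The step needing the most care is the disjointness reduction: this is exactly where $n\geq 3$ enters, and its breakdown on surfaces, where a symplectic basis of $H_1$ cannot be made disjoint, is precisely what leaves room for Corollary \ref{section of flux}; a secondary point is the verification that the $\RR^b$-family of flows lifts to the (infinite-dimensional) universal cover as a homomorphism.
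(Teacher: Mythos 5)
The paper states this proposition with a citation to Fathi and gives no proof of its own, so there is nothing in the source to compare against line by line; I can only evaluate your argument on its merits, and it holds up. Your construction is the natural one and is self-contained in the smooth category: Poincar\'e duals of the $\gamma_i$ give a basis of $H^{n-1}(M;\RR)$; the hypothesis $n\geq 3$ is used exactly once, to perturb the $\gamma_i$ to be pairwise disjoint by general position, after which the tubes $N_i\cong S^1\times D^{n-1}$ can be taken disjoint (triviality of the normal bundle follows from orientability of $M$ and of the circle); the compactly supported bump forms $\beta_i$ push forward to nonzero multiples of $\mathrm{PD}[\gamma_i]$ via the Thom isomorphism for $H^{n-1}_c(N_i)\cong\RR$; the vector fields $X_i$ defined by $\iota_{X_i}\Omega=\beta_i$ are $\Omega$-preserving with compact support in disjoint sets, hence have commuting complete flows; the resulting continuous homomorphism $\RR^b\to\mathrm{Diff}_0(M,\Omega)$ lifts uniquely to $\widetilde{\mathrm{Diff}}_0(M,\Omega)$, and the lift is a homomorphism because the coboundary term lands in the discrete fibre $\pi_1(\mathrm{Diff}_0(M,\Omega))$ and vanishes at the identity; finally $\flux_\Omega$ of the lifted one-parameter group generated by $X_i$ is $t[\beta_i]$ directly from the definition of $\flux_\Omega$ in the paper, so precomposing with the inverse of the isomorphism $\RR^b\cong H^{n-1}(M;\RR)$, $(t_i)\mapsto\sum t_i[\beta_i]$, gives a genuine section. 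Two small points worth being explicit about if you write this up: you should remark that the perturbed $\gamma_i$ still admit tubular neighbourhoods (automatic for smooth embeddings, which is all you need after the isotopy, so the hypothesis in the statement is really only there to cover the topological setting Fathi works in), and you should note that the disjointness reduction is precisely what fails on a closed surface of positive genus, where the intersection form on $H_1$ is nondegenerate, so no basis of $H_1$ can be represented by pairwise disjoint curves -- this matches your closing remark and explains why Corollary~\ref{section of flux} is not in conflict with the proposition.
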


Note that for a closed orientable surface $\Sigma$ whose genus is larger than 1 and a symplectic form $\omega$,
$\widetilde{\mathrm{Diff}}_0(\Sigma,\omega)=\widetilde{{\Symp}}_0(\Sigma,\omega)={\Symp}_0(\Sigma,\omega)$.
We also note that the symplectic flux homomorphism corresponds to the volume flux homomorphism when the dimension of the manifold is two.
Thus, Corollary \ref{section of flux} shows that Proposition \ref{Fathi} does not hold when $n=2$.

We have the following geometric interpretation of Corollary \ref{section of flux}.
For a vector field $X$ on a manifold, let $\mathcal{L}_X$ and $\iota_X$ denote the Lie derivative and the interior product with respect to $X$, respectively.

\begin{cor}\label{nonexistence of vector field}
Let $\Sigma$ be a closed orientable surface whose genus is larger than one and $\omega$ a symplectic form on $\Sigma$.
There are no smooth vector fields $X_1,\ldots,X_{2g}$ on $\Sigma$ satisfying the following conditions.
\begin{enumerate}
  \item $\mathcal{L}_{X_i}\omega=0$,
  \item $\{[\iota_{X_1}\omega],\ldots,[\iota_{X_{2g}}\omega]\}$ is a basis of $H^1(\Sigma;\RR)$,
  \item $[X_i,X_j]=0$ for any $i,j$.
\end{enumerate}
\end{cor}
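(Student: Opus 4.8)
The plan is to deduce this from Corollary \ref{section of flux}: I will argue by contradiction, showing that a family $X_1,\dots,X_{2g}$ as in the statement would manufacture a section homomorphism of the flux homomorphism $\flux_\omega\colon\Symp_0(\Sigma,\omega)\to H^1(\Sigma;\RR)$, which is impossible for genus larger than one. The genus hypothesis enters only through Corollary \ref{section of flux}; the rest of the construction is valid on any closed surface.

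So suppose such $X_1,\dots,X_{2g}$ exist. Since $\Sigma$ is closed, each $X_i$ is complete, and by condition (1) it is a symplectic vector field, so it generates a one-parameter subgroup $\{\varphi_{X_i}^t\}_{t\in\RR}$ of $\Symp_0(\Sigma,\omega)$. By condition (3) we have $[X_i,X_j]=0$, and the standard fact that pairwise commuting complete vector fields have commuting flows gives $\varphi_{X_i}^s\circ\varphi_{X_j}^t=\varphi_{X_j}^t\circ\varphi_{X_i}^s$ for all $i,j$ and all $s,t\in\RR$. Consequently the map
\[\Phi\colon\RR^{2g}\to\Symp_0(\Sigma,\omega),\qquad \Phi(t_1,\dots,t_{2g})=\varphi_{X_1}^{t_1}\circ\cdots\circ\varphi_{X_{2g}}^{t_{2g}},\]
is a group homomorphism from the additive group $\RR^{2g}$, because the commuting of the flows lets us rearrange and combine $\Phi(s)\Phi(t)$ into $\Phi(s+t)$.

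Next I would compute $\flux_\omega\circ\Phi$. As noted after Corollary \ref{section of flux}, for genus larger than one we have $\widetilde{\Symp}_0(\Sigma,\omega)=\Symp_0(\Sigma,\omega)$, so $\flux_\omega$ is a genuine homomorphism to $H^1(\Sigma;\RR)$, and the flux of the isotopy $\{\varphi_{X_i}^t\}_{0\le t\le 1}$ generated by the time-independent vector field $X_i$ equals $[\iota_{X_i}\omega]$. Using that $\flux_\omega$ is a homomorphism, $\flux_\omega(\Phi(t_1,\dots,t_{2g}))=\sum_i t_i[\iota_{X_i}\omega]$. By condition (2) the linear map $L\colon\RR^{2g}\to H^1(\Sigma;\RR)$, $L(t)=\sum_i t_i[\iota_{X_i}\omega]$, is an isomorphism (both spaces have dimension $2g$). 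Hence $s:=\Phi\circ L^{-1}\colon H^1(\Sigma;\RR)\to\Symp_0(\Sigma,\omega)$ is a group homomorphism with $\flux_\omega\circ s=L\circ L^{-1}=\mathrm{id}$, i.e.\ a section homomorphism of $\flux_\omega$, contradicting Corollary \ref{section of flux}. The only points needing care are the two standard facts invoked above — commuting complete vector fields have commuting flows, and the flux of the flow of an autonomous symplectic vector field $X$ is $[\iota_X\omega]$ — but neither is a real obstacle, and once they are in place the argument is a one-line diagram chase; the genuine content is entirely in Theorem \ref{two scl different on Ham} and Corollary \ref{section of flux}.
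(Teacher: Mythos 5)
Your proof is correct and follows the same route as the paper: both argue by contradiction, use the commuting complete vector fields to generate commuting symplectic flows, and assemble them into a section homomorphism of $\flux_\omega$ contradicting Corollary \ref{section of flux}. You are somewhat more explicit about the standard facts invoked (completeness, commuting flows, the linear isomorphism with $\RR^{2g}$), but the content is identical.
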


\begin{remark}
Kaoru Ono pointed out that we can prove Corollary \ref{nonexistence of vector field} by an elementary calculation of vector analysis.
\end{remark}

We also provide examples of $\bG$, $\hG$ and $\alpha \in [\hG,\bG]$
such that $\scl_{\hG,\bG}(\alpha)=0$ and $\scl_{\bG}(\alpha)>0$ are not equivalent even if the quotient group $\hG/\bG$ is a finite group.
(see Proposition \ref{clhgbg and clbg}).

\subsection{Extension problem of (partial) quasimorphisms}\label{extension subsec}
It is a quite natural problem whether a homogeneous quasimorphism $\phi$ on $\bG$ can be extended as a homogeneous quasimorphism on $\hG$.
It is known that every homogeneous quasimorphism on $\hG$ is $\hG$-invariant (\cite{Ca}).
Thus, we see that $\hG$-invariance is necessary to be extended to $\phi\colon \bG\to\RR$ to a homogeneous quasimorphism on $\hG$.
Shtern and the first author also studied a similar topic \cite{Sh,Ka18}.

We give a sufficient condition of quasimorphisms to be extended.

\begin{prop}\label{prop:section_hom homog}
  Let $\bG$ be a normal subgroup of a group $\hG$.
  If there exists a section homomorphism $s \colon \hG/\bG \to \hG$ of the quotient homomorphism $\hG\to\hG/\bG$,
  then for any homogeneous $\hG$-invariant quasimorphism $\phi$ on $\bG$, there exists a homogeneous quasimorphism $\hat\phi$  on $\hG$ such that
  $\hat\phi|_{\bG}=\phi$ and $D(\hat\phi) \leq 2 D(\phi)$.
\end{prop}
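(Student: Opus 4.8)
The plan is to use the section homomorphism $s$ to split $\hat g \in \hat G$ as a product of something in $\bar G$ and something in the image $s(\hat G/\bar G)$, and then define $\hat\phi$ by pulling back $\phi$ along the $\bar G$-part. Concretely, every $\hat g \in \hat G$ can be written uniquely as $\hat g = \theta(\hat g)\, s(q(\hat g))$, where $q\colon \hat G \to \hat G/\bar G$ is the quotient map and $\theta(\hat g) := \hat g \, s(q(\hat g))^{-1} \in \bar G$. I would then set $\hat\phi(\hat g) := \phi(\theta(\hat g))$, and verify three things: that $\hat\phi$ extends $\phi$ (clear, since $\theta|_{\bar G} = \mathrm{id}$ because $q|_{\bar G}$ is trivial and $s$ is a section), that $\hat\phi$ is a quasimorphism with $D(\hat\phi) \le 2D(\phi)$, and that it is homogeneous (or at least that its homogenization still extends $\phi$ with controlled defect).

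The heart of the matter is the defect estimate. For $\hat g, \hat h \in \hat G$ one computes
\[
\theta(\hat g \hat h) = \hat g \hat h\, s(q(\hat h))^{-1} s(q(\hat g))^{-1}
= \theta(\hat g)\,\bigl(s(q(\hat g))\,\theta(\hat h)\,s(q(\hat g))^{-1}\bigr),
\]
using that $s$ is a homomorphism so $s(q(\hat g \hat h)) = s(q(\hat g)) s(q(\hat h))$. Write $c_{\hat g}\colon \bar G \to \bar G$ for conjugation by $s(q(\hat g)) \in \hat G$; since $\bar G$ is normal this is well-defined, and the displayed identity reads $\theta(\hat g\hat h) = \theta(\hat g)\cdot c_{\hat g}(\theta(\hat h))$. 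Therefore
\[
\hat\phi(\hat g\hat h) - \hat\phi(\hat g) - \hat\phi(\hat h)
= \phi\bigl(\theta(\hat g)\,c_{\hat g}(\theta(\hat h))\bigr) - \phi(\theta(\hat g)) - \phi\bigl(c_{\hat g}(\theta(\hat h))\bigr)
+ \Bigl(\phi\bigl(c_{\hat g}(\theta(\hat h))\bigr) - \phi(\theta(\hat h))\Bigr).
\]
The first parenthesized group is bounded by $D(\phi)$ since it is a quasimorphism cocycle on $\bar G$, and the second group \emph{vanishes} because $\phi$ is $\hat G$-invariant and $c_{\hat g}$ is conjugation by an element of $\hat G$. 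Hence $|\hat\phi(\hat g\hat h) - \hat\phi(\hat g) - \hat\phi(\hat h)| \le D(\phi)$, so $\hat\phi$ is a quasimorphism with $D(\hat\phi) \le D(\phi)$.

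It remains to address homogeneity: the $\hat\phi$ just constructed need not be homogeneous, so I would pass to its homogenization $\overline{\hat\phi}(\hat g) := \lim_{n\to\infty} \hat\phi(\hat g^n)/n$, which is a homogeneous quasimorphism with $D(\overline{\hat\phi}) \le 2 D(\hat\phi) \le 2 D(\phi)$ by the standard estimate. The one point needing care is that homogenization still restricts to $\phi$ on $\bar G$: for $\bar g \in \bar G$ we have $\hat\phi(\bar g^n) = \phi(\theta(\bar g^n)) = \phi(\bar g^n) = n\phi(\bar g)$ since $\bar g^n \in \bar G$ and $\phi$ is already homogeneous on $\bar G$, so $\overline{\hat\phi}(\bar g) = \phi(\bar g)$. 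This gives the claimed $\hat\phi$ with $\hat\phi|_{\bar G} = \phi$ and $D(\hat\phi) \le 2D(\phi)$. I do not expect any serious obstacle here; the only subtlety is keeping straight that the $\hat G$-invariance of $\phi$ is exactly what kills the conjugation term in the cocycle computation, and being slightly careful that the quantity $\theta(\hat h)$ appearing under $c_{\hat g}$ lies in $\bar G$ so that $\hat G$-invariance applies.
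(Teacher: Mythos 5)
Your proof is correct and follows essentially the same route as the paper's: both decompose $\hat g$ as a $\bar G$-part times an element of $s(\hat G/\bar G)$ (you put the section on the right, the paper puts it on the left), both use $\hat G$-invariance of $\phi$ to absorb the resulting conjugation so that the preliminary extension has defect $\le D(\phi)$, and both then homogenize to get defect $\le 2D(\phi)$ while preserving the restriction to $\bar G$. The only cosmetic difference is that you make the twisted-cocycle identity $\theta(\hat g\hat h)=\theta(\hat g)\,c_{\hat g}(\theta(\hat h))$ explicit before invoking invariance, whereas the paper conjugates inside the absolute value in a single line.
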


On the other hand, we also give an example of non-extendable quasimorphism.

Shtern \cite[Example 1]{Sh} provided an example of $\hG$-invariant homomorphism on $\bG$ which cannot be extended to $\hG$ as a quasimorphism when $\hG$ is the Heisenberg group and $\bG$ is the commutator subgroup of $\hG$.

 For a closed orientable surface $\Sigma$ whose genus is larger than one and a symplectic form $\omega$ on $\Sigma$,
Py constructed a Calabi quasimorphism $\mu_P\colon\Ham(\Sigma,\omega)\to\RR$ called \textit{Py's Calabi quasimorphism} \cite{Py06}.
Py's Calabi quasimorphism $\mu_P$ is known to be a $\Symp_0(\Sigma,\omega)$-invariant quasimorphism.

\begin{thm}\label{Py nonex}
Let $\Sigma$ be a closed orientable surface whose genus is larger than one and $\omega$ a symplectic form on $\Sigma$.
There does not exists a homogeneous quasimorphism $\hat\mu$  on $\Symp_0(\Sigma,\omega)$ such that $\hat\mu|_{\Symp_0(\Sigma,\omega)}=\mu_P$.
\end{thm}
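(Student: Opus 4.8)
The plan is to argue by contradiction using ordinary Bavard duality for $\hG=\Symp_0(\Sigma,\omega)$, fed by the element that already appears in Theorem \ref{two scl different on Ham}. So, suppose there is a homogeneous quasimorphism $\hat\mu$ on $\hG$ with $\hat\mu|_{\bG}=\mu_P$, where $\bG=\Ham(\Sigma,\omega)$. I will produce an $f\in\bG$ with $\mu_P(f)\neq 0$ on which nevertheless every homogeneous quasimorphism of $\hG$ vanishes; applied to $\hat\mu$ this gives $0=\hat\mu(f)=\mu_P(f)\neq0$, a contradiction.

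For the element $f$, take (exactly as in the proof of Theorem \ref{two scl different on Ham}) a Hamiltonian diffeomorphism supported in a small embedded disc $D\subset\Sigma$ whose Calabi invariant $\mathrm{Cal}_{D}(f)$ is nonzero. Since the genus is larger than one, $D$ is displaceable in $\Sigma$, so the defining Calabi property of Py's quasimorphism gives $\mu_P(f)=\mathrm{Cal}_{D}(f)\neq0$, hence $\hat\mu(f)=\mu_P(f)\neq0$. The other property of this $f$, namely $\scl_{\hG}(f)=0$, is precisely the (harder) half of Theorem \ref{two scl different on Ham}, which we are entitled to quote: inside $\Symp_0(\Sigma,\omega)$ there is enough extra room — supplied by symplectomorphisms with nontrivial flux, which exist because $\Sigma$ has positive genus — to realize powers of $f$ as products of a sublinear number of commutators (a commuting‑conjugates / infinite‑swindle argument moving disjoint translates of $D$ around a handle).

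Now combine the two facts. Since $\Ham(\Sigma,\omega)$ is perfect (\cite{Ban}), we have $f\in[\bG,\bG]\subseteq[\hG,\hG]$, so by Theorem \ref{original Bavard} applied to $\hG$,
\[
0=\scl_{\hG}(f)=\sup_{\psi\in Q(\hG)}\frac12\frac{|\psi(f)|}{D(\psi)}.
\]
Hence $\psi(f)=0$ for every homogeneous quasimorphism $\psi$ on $\hG$ with $D(\psi)>0$, while for $\psi$ with $D(\psi)=0$ (a homomorphism to $\RR$) we get $\psi(f)=0$ because $f\in[\hG,\hG]$. In either case, taking $\psi=\hat\mu$ yields $\hat\mu(f)=0$, contradicting $\hat\mu(f)=\mu_P(f)\neq0$. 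This proves Theorem \ref{Py nonex}.

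The only genuinely substantial input is the vanishing $\scl_{\hG}(f)=0$, i.e.\ that a disc‑supported Hamiltonian diffeomorphism has vanishing stable commutator length in $\Symp_0$ of a higher‑genus surface; the rest is a formal manipulation of Bavard duality together with the Calabi property of $\mu_P$. Since that vanishing is exactly Theorem \ref{two scl different on Ham}, the remaining work is bookkeeping — chiefly checking that the element detected by $\scl_{\hG,\bG}$ in Theorem \ref{two scl different on Ham} may be taken to be one detected by $\mu_P$ itself, which it is, being a disc‑supported map with nonzero Calabi invariant.
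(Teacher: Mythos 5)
Your logical skeleton is correct and amounts to the paper's own argument in different clothing: you use the same element $[f_0,g_0]$ from Theorem~\ref{two scl different on Ham}, and you deduce that any extension $\hat\mu$ must vanish on it. The packaging differs: you route through $\scl_{\hG}([f_0,g_0])=0$ plus (the easy direction of) Bavard duality, whereas the paper extracts the same vanishing directly from the commuting-conjugates identity $[f_0,g_0]^n=[f_0^n,g_0]$ via Lemma~\ref{general nonextendability of qm}. These are equivalent for honest quasimorphisms; the paper's lemma is phrased for partial ($\nu$-controlled) quasimorphisms so that it can be reused verbatim for Theorem~\ref{EP nonex}, which Bavard duality cannot handle. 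For the present statement your route is perfectly adequate and arguably makes the link to Theorem~\ref{two scl different on Ham} more transparent.

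There is, however, a factual slip that needs to be corrected. The element $[f_0,g_0]=\varphi_F$ is \emph{not} supported in a disc: $F$ is supported on an embedded annulus $\iota((-1,1)\times\RR/r\ZZ)$ that wraps around a handle, and the positivity $\mu_P(\varphi_F)>0$ is deduced from Proposition~\ref{Py calabi annulus} (Calabi property on annuli), not from the Calabi property on displaceable discs. This is not merely cosmetic. If you tried to run the argument with a disc-supported Hamiltonian $f$: to get $\scl_{\hG}(f)=0$ by the commuting-conjugates trick you would want $f=[h,g]$ with $h$ and $ghg^{-1}$ disjointly supported; but if $g$ can be chosen Hamiltonian (which it can if the whole configuration sits inside a disc), then $\mu_P([h,g])=\mathrm{Cal}(h)+\mathrm{Cal}(gh^{-1}g^{-1})=0$, so $\mu_P$ does not detect the element. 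The construction in the paper avoids this precisely because $f_0$ and $g_0$ are symplectomorphisms with nontrivial flux (they live in $\Symp_0\setminus\Ham$), which is the extra room afforded by genus $\ge 2$. So your proof is correct once you replace "disc-supported" by "supported in an annulus around a handle" and cite Proposition~\ref{Py calabi annulus}; with "disc-supported" as written, the key inequality $\mu_P(f)\ne 0$ together with $\scl_{\hG}(f)=0$ is not actually established.
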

We note that Proposition \ref{prop:section_hom homog} and Theorem \ref{Py nonex} give another proof of Corollary \ref{section of flux}.

Theorem \ref{Py nonex} has the following corollary.
To explain it, we introduce some notions.
For a closed orientable surface $\Sigma$ whose genus is larger than one, let $B_n(\Sigma)$ denote the full braid group on $n$ strings on $\Sigma$.
For a symplectic form $\omega$ on $\Sigma$, Brandenbursky \cite{Bra} constructed a liner map $\Gamma_n\colon Q(B_n(\Sigma))\to Q(\Ham(\Sigma,\omega))^{\Symp_0(\Sigma,\omega)}$ by generalizing Gambaudo-Ghys' idea \cite{GG}.

Generalizing and sophisticating Ishida's idea \cite{I}, Brandenbursky \cite{Bra} proved that the image $\mathrm{Im}(\Gamma_n)$ of $\Gamma_n$ is infinite-dimensional vector space for any $n\geq2$.
Moreover, he proved that the image $\mathrm{Im}(\Gamma_2)$ of $\Gamma_2$ contains infinitely many $\Symp_0(\Sigma,\omega)$-invariant Calabi quasimorphisms.
Thus, it is a natural problem whether Py's Calabi quasimorphism $\mu_P$ can be constructed by Brandenbursky's method or not.
We note that there exists a linear map $\bar\Gamma_n\colon Q(B_n(\Sigma))\to Q(\Symp_0(\Sigma,\omega))$ and $\Gamma_n=i_Q^\ast\circ\bar\Gamma_n$, where $i^{\ast}_Q\colon Q(\Symp_0(\Sigma,\omega))\to Q(\Ham(\Sigma,\omega))^{\Symp_0(\Sigma,\omega)}$ is the restriction map.
In particular, all elements of $\mathrm{Im}(\Gamma_n)$ are known to be extendable to $\Symp_0(\Sigma,\omega)$.
Hence, we obtain the following corollary of Theorem \ref{Py nonex}.

\begin{cor}\label{Brandenbursky Py}
Let $\Sigma$ be a closed orientable surface whose genus is larger than one and $\omega$ a symplectic form on $\Sigma$.
Then,
$\mu_P\notin\mathrm{Im}(\Gamma_n)$ for any $n\geq2$.
\end{cor}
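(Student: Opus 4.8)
The plan is to argue by contradiction, using only two ingredients already in hand: the factorization $\Gamma_n = i_Q^\ast \circ \bar\Gamma_n$ through the group $\Symp_0(\Sigma,\omega)$ that was recorded just above the statement, and the non-extendability result of Theorem \ref{Py nonex}. Conceptually, the point is that every quasimorphism produced by Brandenbursky's construction on $\Ham(\Sigma,\omega)$ is by design the restriction of a quasimorphism already living on $\Symp_0(\Sigma,\omega)$, whereas $\mu_P$ is not of this form.

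Concretely, I would proceed as follows. Suppose, for contradiction, that $\mu_P \in \mathrm{Im}(\Gamma_n)$ for some $n \geq 2$. Then there is $\psi \in Q(B_n(\Sigma))$ with $\Gamma_n(\psi) = \mu_P$. Applying the identity $\Gamma_n = i_Q^\ast \circ \bar\Gamma_n$ and writing $\hat\mu := \bar\Gamma_n(\psi) \in Q(\Symp_0(\Sigma,\omega))$, we obtain $i_Q^\ast(\hat\mu) = \mu_P$. Since $i_Q^\ast \colon Q(\Symp_0(\Sigma,\omega)) \to Q(\Ham(\Sigma,\omega))^{\Symp_0(\Sigma,\omega)}$ is the restriction map, this says precisely $\hat\mu|_{\Ham(\Sigma,\omega)} = \mu_P$; that is, $\hat\mu$ is a homogeneous quasimorphism on $\Symp_0(\Sigma,\omega)$ extending $\mu_P$. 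This directly contradicts Theorem \ref{Py nonex}, so no such $\psi$ exists, and $\mu_P \notin \mathrm{Im}(\Gamma_n)$ for every $n \geq 2$.

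The only routine verifications are that the objects invoked behave as stated: that $\bar\Gamma_n$ indeed takes values in $Q(\Symp_0(\Sigma,\omega))$ and satisfies $\Gamma_n = i_Q^\ast \circ \bar\Gamma_n$ (this is quoted from \cite{Bra} in the excerpt), that $i_Q^\ast$ is well defined — i.e. the restriction to $\Ham(\Sigma,\omega)$ of a homogeneous quasimorphism on $\Symp_0(\Sigma,\omega)$ is again a homogeneous quasimorphism and is automatically $\Symp_0(\Sigma,\omega)$-invariant, which follows from the conjugation-invariance of homogeneous quasimorphisms — and that $\mu_P$ is genuinely a homogeneous quasimorphism on all of $\Ham(\Sigma,\omega)$, as in \cite{Py06}. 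None of these is an obstacle.

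Thus the corollary is a formal consequence of Theorem \ref{Py nonex}; there is no substantive difficulty left in it, and the real content lies entirely in the proof of Theorem \ref{Py nonex}. If desired, one could also note that this gives a second, independent reason that $\mathrm{Im}(\Gamma_n)$ does not exhaust the space of $\Symp_0(\Sigma,\omega)$-invariant homogeneous quasimorphisms on $\Ham(\Sigma,\omega)$, complementing Brandenbursky's infinite-dimensionality results.
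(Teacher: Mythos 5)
Your argument is exactly the paper's: the factorization $\Gamma_n = i_Q^\ast \circ \bar\Gamma_n$ through $Q(\Symp_0(\Sigma,\omega))$ shows every element of $\mathrm{Im}(\Gamma_n)$ extends to $\Symp_0(\Sigma,\omega)$, and Theorem \ref{Py nonex} rules out $\mu_P$. Your version just phrases the same observation as a proof by contradiction.
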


We also consider ``$C^0$-versions'' of Theorem \ref{Py nonex} (Theorem \ref{C0 nonextend}) and Corollary \ref{section of flux} (Theorem \ref{c0 non-section}).
For this application, we use not only Py's Calabi quasimorphism, but also Brandenbursky's Calabi quasimorphism.

We also study the extension problem of ``partial quasimorphisms''.
For the precise definitions of partial quasimorphism and its extendability, see Section \ref{partial_qm_section}.

For a closed orientable surface $\Sigma$ and a symplectic form $\omega$ on $\Sigma$,
Entov and Polterovich constructed a partial quasimorphism $\mu_{EP}\colon\Ham(\Sigma,\omega)\to\RR$ as the asymptotization of the Oh-Schwarz spectral invariant (\cite{EP06}).
The asymptotization $\mu_{EP}$ is a semi-homogeneous $\nu_{\Ham(U)}$-quasimorphism for any displaceable open subset $U$ of $M$.
(Note that we regard $\Ham(U,\omega)$ as a subgroup of $\Ham(M,\omega)$)

\begin{thm}\label{EP nonex}
Let $\Sigma$ be a closed orientable surface of positive genus and $\omega$ a symplectic form on $\Sigma$.
Then, Entov--Polterovich's partial Calabi quasimorphism $\mu_{EP}\colon\Ham(\Sigma,\omega)\to\RR$ is non-extendable to $\Symp_0(\Sigma,\omega)$.
\end{thm}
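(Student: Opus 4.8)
The plan is to argue by contradiction. Suppose $\mu_{EP}$ extends to $\Symp_0(\Sigma,\omega)$ in the sense of Section~\ref{partial_qm_section}: there is a semi-homogeneous $\nu_{\Ham(U)}$-quasimorphism $\hat\mu$ on $\Symp_0(\Sigma,\omega)$, for some displaceable open $U$, with $\hat\mu|_{\Ham(\Sigma,\omega)}=\mu_{EP}$. I would use two structural facts: (a) the \emph{Calabi property} of $\mu_{EP}$, namely that on $\Ham(V,\omega)$ for every displaceable open $V$ the restriction $\mu_{EP}|_{\Ham(V,\omega)}$ is a fixed nonzero multiple of the Calabi homomorphism $\mathrm{Cal}_V$, which is \emph{unbounded}; and (b) the $\Symp_0(\Sigma,\omega)$-invariance of $\mu_{EP}$, so that (as $\Ham$ is normal in $\Symp_0$) $\hat\mu(\alpha\beta\alpha^{-1})=\hat\mu(\beta)$ for all $\alpha\in\Symp_0(\Sigma,\omega)$ and $\beta\in\Ham(\Sigma,\omega)$, together with the fact that, taking $U$ to be a small ball, $\nu_{\Ham(U)}(\gamma)\le 1$ whenever $\gamma$ is supported in a ball Hamiltonianly isotopic into $U$.

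The mechanism is a ``commutator-peeling'' estimate. For $\beta\in\Ham(U,\omega)$, any $\alpha\in\Symp_0(\Sigma,\omega)$, and any $x$, applying the $\nu$-controlled quasimorphism inequality to the decompositions $x[\alpha,\beta]=(x\cdot\alpha\beta\alpha^{-1})\cdot\beta^{-1}$ and then $(x)\cdot(\alpha\beta\alpha^{-1})$, and using (b), gives $|\hat\mu(x[\alpha,\beta])-\hat\mu(x)|\le C'$ with $C'$ depending only on $\hat\mu$. Peeling such commutators off one at a time, it follows that if some $f\in\Ham(V,\omega)$ --- or a large power $f^n$, after which semi-homogeneity is invoked --- can be written as a product of a \emph{uniformly bounded} number of commutators $[\alpha_k,\beta_k]$ with each $\beta_k$ supported in a small ball, then $|\mu_{EP}(f)|$ is bounded independently of $f$, contradicting the unboundedness of $\mathrm{Cal}_V$. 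So the statement reduces to the group-theoretic assertion that, for a small embedded disk $B\subset\Sigma$, every element of $\Ham(B,\omega)$ (up to a bounded power) is such a bounded product; here one exploits the positive genus, running a \emph{finite} displacement argument in which a symplectomorphism rotating the annular neighbourhood of a non-separating simple closed curve produces many pairwise-disjoint copies of $B$ inside a single displaceable region --- and for $\Sigma=T^2$ one uses translations in place of that symplectomorphism.

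I expect this last reduction to be the main obstacle. The $\nu$-controlled inequality bounds errors only by $\min(\nu(\cdot),\nu(\cdot))$, so an uncontrolled fragmentation into $N$ displaceable pieces costs an error of order $N$ and is useless; the naive infinite swindle, which would give the cleanest conclusion, is not available on a compact surface, so one must keep the number of pieces bounded by a genuinely finite and geometrically careful construction --- exactly the point where $g\ge 1$ is used, and which has no analogue on $S^2$, where $\mu_{EP}$ is already a genuine quasimorphism. For $g\ge 2$ one might instead play $\mu_{EP}$ off against Py's quasimorphism $\mu_P$ and invoke Theorem~\ref{Py nonex}; the torus case appears to need the direct argument, since there the flux homomorphism admits a section and hence, by Proposition~\ref{prop:section_hom homog}, genuine $\Symp_0(T^2,\omega)$-invariant quasimorphisms on $\Ham(T^2,\omega)$ do extend.
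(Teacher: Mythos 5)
Your structural intuition is in the right neighbourhood --- use $\Symp_0$-invariance of a putative extension $\hat\mu$ to bound its values on commutators, and contradict a computed nonzero value of $\mu_{EP}$ --- but the reduction you propose is much stronger than what is needed, and it is precisely the part you acknowledge not being able to close. You try to show that every element of $\Ham(B,\omega)$, up to a bounded power, is a uniformly bounded product of commutators of the right form; no such fragmentation statement is needed, and the ``finite displacement argument'' you gesture at does not obviously work. What the paper actually does is exhibit a \emph{single} pair $f_0,g_0\in\Symp_0(\Sigma,\omega)$, supported in an annular neighbourhood of a non-separating simple closed curve, with the crucial disjoint-support property $\Supp(f_0)\cap\Supp(g_0f_0^{-1}g_0^{-1})=\emptyset$; this forces $f_0$ and $g_0f_0^{-1}g_0^{-1}$ to commute, hence $[f_0,g_0]^n=[f_0^n,g_0]$ for all $n$ (equation \eqref{commu lem}), and then Lemma~\ref{extendbility implies vanishing} --- which packages exactly the ``peeling'' estimate you describe but applied to a single commutator written as $[f_0^n,g_0]$ and divided by $n$ --- shows $\hat\mu([f_0,g_0])=0$ for \emph{any} semi-homogeneous partial quasimorphism $\hat\mu$ on the ambient group. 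No fragmentation, no bounded-product claim.

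The second place where you are off the mark is the source of the nonzero value. You invoke the Calabi property of $\mu_{EP}$ on displaceable sets and the unboundedness of $\mathrm{Cal}_V$. But $[f_0,g_0]=\varphi_F$ is supported in $\mathrm{Im}(\iota)$, an annular neighbourhood of a non-contractible, hence non-displaceable, curve, so the Calabi property does not apply there. The paper instead cites the \emph{heaviness} of the meridian (Proposition~\ref{meridian heavy}, from Example 1.18 of Entov--Polterovich), which gives the exact value $\mu_{EP}(\varphi_F)=\int_\Sigma F\omega-\int_\Sigma\omega<0$. The negative shift by $\int_\Sigma\omega$ is precisely what the Calabi property fails to predict, and it is what makes $\mu_{EP}([f_0,g_0])\neq 0$. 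Your closing remark about playing $\mu_{EP}$ off against $\mu_P$ for $g\geq 2$ is also not how the paper proceeds: the argument is uniform in the genus $g\geq 1$ and does not pass through Theorem~\ref{Py nonex}. So, concretely: you are missing the disjoint-support construction of $f_0,g_0$ and the resulting $[f_0,g_0]^n=[f_0^n,g_0]$ identity, and you are reaching for the wrong input (Calabi property) where you need heaviness.
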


Theorem \ref{EP nonex} is interesting because of the following reason.
As we noted in Example \ref{torus section}, the (descended) flux homomorphism $\mathrm{Flux}_\omega\colon\Symp_0(T^2,\omega)\to H^1(T^2;\RR)/H^1(T^2;\ZZ)$ has a section homomorphism.
Thus, Theorem \ref{EP nonex} shows that the same statement as Proposition \ref{prop:section_hom homog} does not hold for partial quasimorphisms.

\subsection{The space of non-extendable quasimorphisms}\label{space nonex subsec}
We study the space of non-extendable quasimorphisms.
We show that that space can be described in terms of the group cohomology under some assumptions.
For notations on the cohomology and the bounded cohomology of discrete groups, see Section \ref{2nd coh section}. 
\begin{thm} \label{thm:embed}
Let $1 \to G  \xrightarrow{i} \hG \xrightarrow{q} H \to 1$ be an exact sequence of groups.
Let $c_{\hG} \colon H_b^2(\hG;\RR) \to H^2(\hG;\RR)$ denote the comparison map.
If $H^1(G;\RR)=0$ and $H$ is amenable, there are an isomorphism
  \[\tau_{\hG} \colon Q(G)^{\hG}/Q(\hG) \to \mathrm{Im}(c_{\hG})\cap\mathrm{Im}(q^\ast)\subset H^2(\hG;\RR)\]
  and an injective homomorphism
  \[\tau_H \colon Q(G)^{\hG}/Q(\hG) \to H^2(H;\RR).\]
Here we regard $Q(\hG)$ as a subgroup of $Q(G)^{\hG}$ by the restriction map.
\end{thm}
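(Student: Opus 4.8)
The plan is to build the two maps directly from the long exact sequences that relate (bounded) cohomology of groups to spaces of (homogeneous) quasimorphisms, and then to identify the image precisely using the amenability of $H$. Recall the basic machinery: for any group $\Gamma$ there is an exact sequence
\[
0 \to H^1(\Gamma;\RR) \to Q(\Gamma) \xrightarrow{\delta} H_b^2(\Gamma;\RR) \xrightarrow{c_\Gamma} H^2(\Gamma;\RR),
\]
where $\delta$ sends a homogeneous quasimorphism to the bounded class of its coboundary, and $\ker c_\Gamma = \mathrm{Im}(\delta) \cong Q(\Gamma)/H^1(\Gamma;\RR)$. Applying this to $\Gamma = G$ and using the hypothesis $H^1(G;\RR)=0$, we get $Q(G) \cong EH_b^2(G;\RR) := \ker(c_G)$. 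The first step is to check that this isomorphism is $\hG$-equivariant for the conjugation action, so that it restricts to an isomorphism $Q(G)^{\hG} \cong EH_b^2(G;\RR)^{\hG}$, where the superscript denotes the invariants of the $Q$-action of $\hG$ on the bounded cohomology of its normal subgroup $G$.

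Second, I would feed the exact sequence $1 \to G \to \hG \to H \to 1$ with $H$ amenable into the Hochschild--Serre / Lyndon spectral sequence in bounded cohomology. Because $H$ is amenable, $H_b^\ast(H; V) = 0$ in positive degrees for any coefficient module, and the edge/inflation--restriction phenomena force the restriction map $H_b^2(\hG;\RR) \to H_b^2(G;\RR)^{\hG}$ (landing in the $\hG$-invariants, since $\hG$-inner automorphisms act trivially on $H_b^\ast(\hG;\RR)$) to be an \emph{isomorphism}; more precisely one gets an exact sequence in which the $H$-terms vanish, so inflation from $H$ is zero in degree $2$ and restriction to $G$ is injective with image the invariants. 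Combining this with the first step yields a canonical isomorphism $Q(\hG)/H^1(\hG;\RR)\,\hookrightarrow\,H_b^2(\hG;\RR) \cong H_b^2(G;\RR)^{\hG} \cong Q(G)^{\hG}$, under which $Q(\hG)$ maps onto the image of the restriction $Q(\hG) \to Q(G)^{\hG}$. Quotienting out, $Q(G)^{\hG}/Q(\hG)$ becomes identified with $EH_b^2(\hG;\RR)/\delta_{\hG}(Q(\hG)) \cong EH_b^2(\hG;\RR) \cap (\text{complement of }\ker c_{\hG})$; chasing through the sequence for $\hG$, $c_{\hG}$ carries this isomorphically onto $\mathrm{Im}(c_{\hG}) \cap \mathrm{Im}(q^\ast)$, because a class in $H^2(\hG;\RR)$ is inflated from $H$ precisely when its restriction to $G$ vanishes, i.e. precisely when it comes from an $\hG$-invariant bounded class on $G$ that dies in $H^2(G;\RR)$. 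This defines $\tau_{\hG}$ and establishes that it is an isomorphism onto $\mathrm{Im}(c_{\hG})\cap\mathrm{Im}(q^\ast)$.

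Third, for $\tau_H$, I would use that $q^\ast \colon H^2(H;\RR) \to H^2(\hG;\RR)$ is injective: again from the five-term exact sequence of the Hochschild--Serre spectral sequence in \emph{ordinary} cohomology, $H^1(\hG;\RR) \to H^1(G;\RR)^{H} \to H^2(H;\RR) \xrightarrow{q^\ast} H^2(\hG;\RR)$, and the middle term $H^1(G;\RR)^H$ vanishes by hypothesis, so $q^\ast$ is injective. Therefore $\mathrm{Im}(q^\ast) \cong H^2(H;\RR)$, and composing the inverse of $q^\ast$ (onto its image) with $\tau_{\hG}$ gives the injective homomorphism $\tau_H \colon Q(G)^{\hG}/Q(\hG) \to H^2(H;\RR)$, landing inside (but in general not onto) $H^2(H;\RR)$ — it surjects exactly onto $(q^\ast)^{-1}(\mathrm{Im}(c_{\hG}))$, the ``boundedly attained'' inflated classes.

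The main obstacle I expect is the second step: making the bounded-cohomology spectral-sequence argument rigorous enough to conclude that restriction $H_b^2(\hG;\RR) \to H_b^2(G;\RR)^{\hG}$ is an isomorphism purely from amenability of $H$. The Hochschild--Serre spectral sequence in bounded cohomology is more delicate than its classical analogue (one must work with, e.g., the Ivanov--Monod framework of resolutions by relatively injective modules, and be careful that $H_b^\ast(H; W)=0$ requires $W$ to be a dual Banach $H$-module, which $H_b^2(G;\RR)$ with the appropriate topology is); alternatively one can bypass the full spectral sequence and argue by hand that every bounded $2$-cocycle on $G$ that is $\hG$-invariant can be extended to a bounded $2$-cocycle on $\hG$, using an amenable mean on $H$ to average a (possibly unbounded) set-theoretic extension — but controlling boundedness of the averaged cocycle is exactly where the care is needed. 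Once that extension-by-averaging lemma is in place, the identifications of images in terms of $\mathrm{Im}(c_{\hG})$ and $\mathrm{Im}(q^\ast)$ are formal diagram chases.
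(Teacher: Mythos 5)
Your plan matches the paper's proof in its essential structure: both use the exact sequence $0 \to H^1(\Gamma;\RR) \to Q(\Gamma) \to H_b^2(\Gamma;\RR) \to H^2(\Gamma;\RR)$ for $\Gamma = G$ and $\Gamma = \hG$, the key isomorphism $i_b^\ast \colon H_b^2(\hG;\RR) \to H_b^2(G;\RR)^{\hG}$ coming from amenability of $H$, and the five-/seven-term exact sequence in ordinary cohomology (to identify $\mathrm{Ker}(i^\ast) = \mathrm{Im}(q^\ast)$ and to show $q^\ast$ is injective, using $H^1(G;\RR)=0$). The map $\tau_{\hG}$ is in both cases essentially $c_{\hG} \circ (i_b^\ast)^{-1} \circ \delta_G$. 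The main difference is that the paper does not re-derive the amenability isomorphism; it is exactly Gromov's theorem (Theorem 7.2 in the paper, citing [Gr]), so the worry you flag as "the main obstacle" --- setting up a bounded Hochschild--Serre spectral sequence and controlling the averaging --- is not needed; you can just cite it. Your worry is well-founded as a mathematical point, but it is a solved problem in the literature, and recognizing it as such would shorten the argument considerably.

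One genuine inaccuracy in your chain of identifications: you write $H_b^2(\hG;\RR) \cong H_b^2(G;\RR)^{\hG} \cong Q(G)^{\hG}$, but the second isomorphism is false in general. From $H^1(G;\RR)=0$ you only get $Q(G) \cong \ker(c_G)$, hence $Q(G)^{\hG} \cong \ker(c_G)^{\hG}$, which is a proper subspace of $H_b^2(G;\RR)^{\hG}$ unless $c_G$ vanishes on the invariants (equivalently, unless $\mathrm{Im}(c_{\hG}) \cap \mathrm{Im}(q^\ast) = 0$, exactly the case when the theorem's conclusion is trivial). The subsequent "quotienting out" step is also garbled as written: $EH_b^2(\hG;\RR) \cap (\text{complement of } \ker c_{\hG})$ is not a meaningful subspace. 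The correct bookkeeping is that $(i_b^\ast)^{-1} \circ \delta_G$ carries $Q(G)^{\hG}$ isomorphically onto $\ker(i^\ast \circ c_{\hG}) \subset H_b^2(\hG;\RR)$ and carries $i_Q^\ast(Q(\hG))$ onto $\ker(c_{\hG})$; applying $c_{\hG}$ to the quotient then lands isomorphically in $c_{\hG}(\ker(i^\ast c_{\hG})) = \mathrm{Im}(c_{\hG}) \cap \mathrm{Ker}(i^\ast) = \mathrm{Im}(c_{\hG}) \cap \mathrm{Im}(q^\ast)$. With those two repairs, your argument coincides with the paper's.
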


Theorem \ref{thm:embed} is very useful when we study non-extendable quasimorphisms on $\Ham(M,\omega)$.

\begin{cor}\label{symp:embed}
  Let $(M,\omega)$ be a closed symplectic manifold.
Then, there are an isomorphism
  \[\tau_{\Symp_0} \colon Q\left(\Ham(M, \omega)\right)^{\Symp_0(M, \omega)}/Q(\Symp_0(M, \omega)) \to \mathrm{Im}(c_{\Symp_0(M,\omega)})\cap\mathrm{Im}(\flux_\omega^\ast)\]
  and an injective homomorphism
  \[\tau_{H^1} \colon Q\left(\Ham(M, \omega)\right)^{\Symp_0(M, \omega)}/Q(\Symp_0(M, \omega)) \to H^2(H^1(M; \RR)/\Gamma_\omega;\RR).\]
 Here, $\Gamma_\omega$ is the symplectic flux group.
\end{cor}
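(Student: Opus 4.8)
The plan is to obtain Corollary~\ref{symp:embed} as a direct specialization of Theorem~\ref{thm:embed}. The relevant exact sequence is the \emph{flux exact sequence}
\[
  1 \longrightarrow \Ham(M,\omega) \xrightarrow{i} \Symp_0(M,\omega) \xrightarrow{\flux_\omega} H^1(M;\RR)/\Gamma_\omega \longrightarrow 1,
\]
where $\flux_\omega$ is the descended flux homomorphism and $\Gamma_\omega$ is the symplectic flux group. First I would recall the classical facts (due to Banyaga \cite{Ban}) that make this sequence exact: for a closed symplectic manifold $M$ the kernel of $\flux_\omega$ is exactly $\Ham(M,\omega)$, which is therefore a normal subgroup of $\Symp_0(M,\omega)$, and $\flux_\omega$ is surjective (every cohomology class is the flux of the time-one map of a symplectic vector field). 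Thus we are in the situation of Theorem~\ref{thm:embed} with $G=\Ham(M,\omega)$, $\hG=\Symp_0(M,\omega)$, $H=H^1(M;\RR)/\Gamma_\omega$ and $q=\flux_\omega$.

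Next I would verify the two hypotheses of Theorem~\ref{thm:embed}. The quotient group $H=H^1(M;\RR)/\Gamma_\omega$ is abelian, hence amenable. For the vanishing $H^1(\Ham(M,\omega);\RR)=0$, I would invoke the theorem of Banyaga \cite{Ban} that $\Ham(M,\omega)$ is a perfect group when $M$ is closed; since $H^1(G;\RR)=\mathrm{Hom}(G,\RR)$ for any group $G$, perfectness of $\Ham(M,\omega)$ immediately gives $H^1(\Ham(M,\omega);\RR)=0$. This perfectness statement is the only genuinely nontrivial ingredient; everything else is bookkeeping.

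With both hypotheses in place, Theorem~\ref{thm:embed} applies verbatim and produces the isomorphism
\[
  \tau_{\Symp_0}\colon Q\bigl(\Ham(M,\omega)\bigr)^{\Symp_0(M,\omega)}/Q(\Symp_0(M,\omega)) \xrightarrow{\cong} \mathrm{Im}(c_{\Symp_0(M,\omega)})\cap\mathrm{Im}(\flux_\omega^\ast)
\]
inside $H^2(\Symp_0(M,\omega);\RR)$, together with the injective homomorphism $\tau_{H^1}$ into $H^2(H^1(M;\RR)/\Gamma_\omega;\RR)$, which is exactly the statement of the corollary. Here $Q(\Symp_0(M,\omega))$ is regarded as a subgroup of $Q(\Ham(M,\omega))^{\Symp_0(M,\omega)}$ via the restriction map, which makes sense because every homogeneous quasimorphism on $\Symp_0(M,\omega)$ is $\Symp_0(M,\omega)$-invariant and hence restricts to a $\Symp_0(M,\omega)$-invariant quasimorphism on $\Ham(M,\omega)$, as noted in the introduction. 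I do not anticipate any obstacle beyond carefully transporting the notation of Theorem~\ref{thm:embed} into the symplectic setting: the entire content of the corollary is contained in that theorem together with the perfectness of $\Ham(M,\omega)$.
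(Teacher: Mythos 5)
Your proposal is correct and follows essentially the same route as the paper: specialize Theorem~\ref{thm:embed} to the flux exact sequence, using amenability of the abelian group $H^1(M;\RR)/\Gamma_\omega$ and Banyaga's perfectness of $\Ham(M,\omega)$ to verify the hypotheses. No gaps.
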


Let $\Sigma$ be a closed orientable surface whose genus is larger than one and $\omega$ a symplectic form on $\Sigma$.
As a corollary of  Corollary \ref{symp:embed}, we see that $\tau_{\hG}([\mu_P])\in H^2(\Symp_0(\Sigma,\omega))$ and $\tau_H([\mu_P])\in H^2(H^1(\Sigma;\RR))$ are non-trivial cohomology classes.
Since we do not know a precise description of the map $(i^\ast_b)^{-1}\colon H^2_b(G;\RR)^{\hG} \to H^2_b(\hG;\RR)$ used in the construction of $\tau_{\Symp_0}$ and $\tau_{H^1}$, we do not know the precise value of $\tau_{\Symp_0}([\mu_P])$ and $\tau_{H^1}([\mu_P])$.
As pointed out by Kotschick and Morita \cite{KM}, $H^\ast(H^1(\Sigma;\RR);\RR)$ can be a very large space.
However, they constructed a natural embedding $\iota_{KM}\colon H_\ast(T^{2g};\RR)\to H^\ast(H^1(\Sigma;\RR);\RR)$ and calculated $\flux^\ast\circ\iota_{KM}(H_\ast(T^{2g};\RR))$,
where $g$ is the genus of $\Sigma$.
It is an interesting problem whether $\tau_H([\mu_P])\in \iota_{KM}(H_2(T^{2g};\RR))$ or not.

We also prove that Py's Calabi quasimorphism $\mu_P$ is the ``unique'' quasimorphism which is non-extendable to some subgroup of $\Symp_0(\Sigma,\omega)$.
\begin{cor}\label{Py unique}
Let $\Sigma$ be a closed orientable surface whose genus is larger than one and $\omega$ a symplectic form on $\Sigma$.
Let $f_0,g_0$ be elements of $\Symp_0(\Sigma,\omega)$ defined in Section \ref{symplectic section} and set $a=\flux_\omega(f_0)$, $b=\flux_\omega(g_0)$ and $\hG_0=(\flux_\omega)^{-1}(\ZZ\langle a,b\rangle)$, where $\ZZ\langle a,b\rangle$ is the lattice generated by $a$ and $b$ in $H^1(\Sigma,\RR)$.
Then,
  \[Q\left(\Ham(\Sigma,\omega)\right)^{\hG_0}/Q(\hG_0)=\RR\langle[\mu_P]\rangle,\]
  where $\RR\langle[\mu_P]\rangle$ is the linear subspace spanned by $[\mu_P]$ of $Q\left(\Ham(\Sigma,\omega)\right)^{\hG_0}/Q(\hG_0)$.
\end{cor}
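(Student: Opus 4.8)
The plan is to combine two ingredients: an upper bound on $\dim_\RR Q(\Ham(\Sigma,\omega))^{\hG_0}/Q(\hG_0)$ coming from Theorem~\ref{thm:embed}, and the non-triviality of the class $[\mu_P]$ coming from (the proof of) Theorem~\ref{Py nonex}.

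First I would set up the short exact sequence $1 \to \Ham(\Sigma,\omega) \to \hG_0 \to \ZZ\langle a,b\rangle \to 1$. Since $\Sigma$ has genus larger than one, the flux group is trivial, so $\flux_\omega \colon \Symp_0(\Sigma,\omega)\to H^1(\Sigma;\RR)$ is a genuine homomorphism with kernel $\Ham(\Sigma,\omega)$; hence $\hG_0/\Ham(\Sigma,\omega)\cong\ZZ\langle a,b\rangle$, which for the $f_0,g_0$ fixed in Section~\ref{symplectic section} is isomorphic to $\ZZ^2$, in particular amenable. Moreover $\Ham(\Sigma,\omega)$ is perfect by Banyaga~\cite{Ban}, so $H^1(\Ham(\Sigma,\omega);\RR)=0$. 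Thus the hypotheses of Theorem~\ref{thm:embed} hold for this sequence, and it yields an injective homomorphism
\[ \tau_H \colon Q(\Ham(\Sigma,\omega))^{\hG_0}/Q(\hG_0) \hookrightarrow H^2(\ZZ^2;\RR). \]
As $\ZZ^2$ is the fundamental group of the $2$-torus, $H^2(\ZZ^2;\RR)\cong\RR$, and therefore $\dim_\RR Q(\Ham(\Sigma,\omega))^{\hG_0}/Q(\hG_0)\le 1$.

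Next I would note that $\mu_P$, being a $\Symp_0(\Sigma,\omega)$-invariant homogeneous quasimorphism on $\Ham(\Sigma,\omega)$, is in particular $\hG_0$-invariant, so $[\mu_P]$ is a well-defined element of $Q(\Ham(\Sigma,\omega))^{\hG_0}/Q(\hG_0)$ (the restriction map $Q(\hG_0)\to Q(\Ham(\Sigma,\omega))^{\hG_0}$ being injective because $H^1(\Ham(\Sigma,\omega);\RR)=0$). It then remains to show $[\mu_P]\ne 0$, i.e.\ that $\mu_P$ admits no extension to a homogeneous quasimorphism on $\hG_0$. For this I would revisit the proof of Theorem~\ref{Py nonex}: that argument exhibits an obstruction to extending $\mu_P$ to $\Symp_0(\Sigma,\omega)$ which is realized entirely by the elements $f_0,g_0$ together with Hamiltonian diffeomorphisms of $\Sigma$. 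Since $\hG_0=\langle \Ham(\Sigma,\omega), f_0, g_0\rangle$, the very same computation rules out an extension of $\mu_P$ to $\hG_0$, whence $[\mu_P]\ne 0$. Combining this with the dimension bound, $Q(\Ham(\Sigma,\omega))^{\hG_0}/Q(\hG_0)$ is one-dimensional and spanned by $[\mu_P]$, which is the assertion.

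The hard part is the last step. Non-extendability to the large group $\Symp_0(\Sigma,\omega)$ (Theorem~\ref{Py nonex}) does not formally imply non-extendability to the much smaller group $\hG_0\subset\Symp_0(\Sigma,\omega)$, where there are far fewer constraints; nor can one deduce it from the non-triviality of $\tau_H([\mu_P])\in H^2(H^1(\Sigma;\RR);\RR)$ obtained via Corollary~\ref{symp:embed}, since restricting that class along $\ZZ\langle a,b\rangle\hookrightarrow H^1(\Sigma;\RR)$ would require knowing the precise location of $\tau_H([\mu_P])$, which is left open. So one genuinely has to inspect the mechanism of the proof of Theorem~\ref{Py nonex} and verify that the elements $f_0,g_0$ singled out in Section~\ref{symplectic section} are exactly those realizing the obstruction, so that it already lives inside $\hG_0$.
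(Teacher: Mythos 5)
Your proposal is correct and takes essentially the same approach as the paper: obtain the bound $\dim_\RR\bigl(Q(\Ham(\Sigma,\omega))^{\hG_0}/Q(\hG_0)\bigr)\le 1$ from Theorem~\ref{thm:embed} (the paper routes this through Corollary~\ref{symp:dim}, which is the same computation), and then observe that since $f_0,g_0\in\hG_0$, the argument of Theorem~\ref{Py nonex} via Lemma~\ref{general nonextendability of qm} applies verbatim with $\hG$ replaced by $\hG_0$ to show $[\mu_P]\neq 0$. Your cautionary paragraph correctly pinpoints where the work is and why a formal reduction from Theorem~\ref{Py nonex} would not suffice.
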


When $\hG$ is a small subspace of $\Symp_0(M,\omega)$,
we show that the space of non-extendable quasimorphisms is finite-dimensional.
\begin{cor}\label{symp:dim}
  Let $(M,\omega)$ be a closed symplectic manifold and $H$ be a subgroup of $H^1(M;\RR)/\Gamma_\omega$ which is isomorphic to $\ZZ^N$ as a group.
  We set $\hG=(\flux_\omega)^{-1}(H)\subset\Symp_0(M, \omega)$.
Then,
  \[\mathrm{dim}_{\RR}\left(Q\left(\Ham(M,\omega)\right)^{\hG}/Q(\hG)\right)\leq N(N-1)/2.\]
\end{cor}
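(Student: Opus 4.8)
The plan is to obtain the estimate directly from Theorem \ref{thm:embed} (equivalently, from Corollary \ref{symp:embed} applied to the subgroup $\hG$ in place of $\Symp_0(M,\omega)$), combined with the elementary computation of $H^2(\ZZ^N;\RR)$.

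First I would set up the relevant short exact sequence. Since the descended flux homomorphism $\flux_\omega \colon \Symp_0(M,\omega) \to H^1(M;\RR)/\Gamma_\omega$ is surjective with kernel $\Ham(M,\omega)$, its restriction to $\hG=(\flux_\omega)^{-1}(H)$ is surjective onto $H$ with the same kernel, which gives
\[ 1 \longrightarrow \Ham(M,\omega) \xrightarrow{\ i\ } \hG \xrightarrow{\ \flux_\omega\ } H \longrightarrow 1, \]
and $\Ham(M,\omega)$ is normal in $\hG$ because it is already normal in $\Symp_0(M,\omega)$. To apply Theorem \ref{thm:embed} with $G=\Ham(M,\omega)$ I must verify its two hypotheses: $H^1(\Ham(M,\omega);\RR)=0$ holds because $\Ham(M,\omega)$ is perfect for a closed symplectic manifold (\cite{Ban}), so $H^1(\Ham(M,\omega);\RR)=\mathrm{Hom}(\Ham(M,\omega),\RR)=0$; and $H$ is amenable because $H\cong\ZZ^N$ is abelian. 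Theorem \ref{thm:embed} then supplies an injective homomorphism
\[ \tau_H \colon Q\left(\Ham(M,\omega)\right)^{\hG}/Q(\hG) \hookrightarrow H^2(H;\RR)=H^2(\ZZ^N;\RR). \]

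Next I would compute the target. The $N$-torus $T^N$ is a classifying space for $\ZZ^N$, so $H^\ast(\ZZ^N;\RR)\cong H^\ast(T^N;\RR)$ is the exterior algebra on $N$ generators of degree one; in particular $H^2(\ZZ^N;\RR)\cong\Lambda^2\RR^N$ has dimension $\binom{N}{2}=N(N-1)/2$. Injectivity of $\tau_H$ then forces
\[ \dim_\RR\left(Q\left(\Ham(M,\omega)\right)^{\hG}/Q(\hG)\right)\le\dim_\RR H^2(\ZZ^N;\RR)=\frac{N(N-1)}{2}, \]
which is the claimed bound. There is essentially no obstacle here beyond Theorem \ref{thm:embed} itself, where all the genuine work resides; the only points needing a routine word of justification are the surjectivity of $\flux_\omega|_{\hG}$ onto $H$, the perfectness of $\Ham(M,\omega)$, and the standard identification $H^\ast(\ZZ^N;\RR)\cong\Lambda^\ast\RR^N$.
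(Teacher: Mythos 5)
Your proposal is correct and follows essentially the same route as the paper: restrict the flux homomorphism to $\hG$ to get the short exact sequence with kernel $\Ham(M,\omega)$, verify $H^1(\Ham(M,\omega);\RR)=0$ via perfectness and the amenability of $H\cong\ZZ^N$, apply Theorem \ref{thm:embed} to obtain the injection $\tau_H$ into $H^2(H;\RR)$, and compute $\dim_\RR H^2(\ZZ^N;\RR)=N(N-1)/2$.
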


\subsection{Organization of the paper}

In Section \ref{bavard section}, we give a proof of Theorem \ref{thm:bavard}.
In Section \ref{comparison section}, we prove some results on the comparison of commutator lengths. We give a proof of Proposition \ref{prop:section_hom homog} in Subsection \ref{extend condition subsection} since we use it to prove these results.
In Section \ref{partial_qm_section}, we summarize the definitions and lemmas about partial quasimorphisms and their non-extendability.
In Section \ref{symplectic section}, we prepare notions in symplectic geometry and prove Theorem \ref{Py nonex}.
In Section \ref{C0 section}, we prove a result of non-extendability in $C^0$ setting (Theorem \ref{C0 nonextend}).
In Section \ref{2nd coh section}, we prove Theorem \ref{thm:embed} and Corollary \ref{symp:embed}, \ref{Py unique} and \ref{symp:dim}.



\section{$\hG$-invariant Bavard duality}\label{bavard section}

\subsection{$(\hG,\bG)$-commutator length}

We recall that a $(\hG,\bG)$-commutator is an element $[\hg,\bg]$ with $\hg \in \hG$ and
$\bg \in \bG$.
Let $[\hG,\bG]$ denote the subgroup of $\bG$ generated by $(\hG,\bG)$-commutators.
For $x \in [\hG,\bG]$ we define the $(\hG,\bG)$-commutator length $\cl_{\hG,\bG}(x)$ of $x$ by
the smallest number of $(\hG,\bG)$-commutators whose product is equal to $x$.
Since $\cl_{\hG,\bG}$ is subadditive, the limit
$\scl_{\hG,\bG}(x)=\lim_{n\to\infty} \cl_{\hG,\bG}(x^n)/n$ exists.

\begin{lem}\label{lem:lower_bound}
Let $\phi$ be a $\hG$-invariant homogeneous quasimorphism on $\bG$. For any $x \in [\hG,\bG]$,
\[ \scl_{\hG,\bG} (x) \geq \frac{1}{2}\frac{|\phi(x)|}{D(\phi)}. \]

\end{lem}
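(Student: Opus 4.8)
The plan is to mimic the standard proof of one direction of Bavard duality, adapted to $(\hG,\bG)$-commutators. Fix a $\hG$-invariant homogeneous quasimorphism $\phi$ on $\bG$ and an element $x \in [\hG,\bG]$. First I would take an arbitrary expression $x = \prod_{j=1}^{n} [\hat a_j, b_j]$ with $\hat a_j \in \hG$ and $b_j \in \bG$, where $n = \cl_{\hG,\bG}(x)$. The key point is that for a single $(\hG,\bG)$-commutator the quasimorphism is almost killed: since $\phi$ is a quasimorphism with defect $D(\phi)$ and is homogeneous (hence conjugation-invariant under $\bG$, and here even under all of $\hG$ by hypothesis), one has $|\phi([\hat a, b])| \le D(\phi)$. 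Indeed, $|\phi([\hat a,b]) - \phi(\hat a b \hat a^{-1}) - \phi(b^{-1})| \le D(\phi)$, and $\hG$-invariance gives $\phi(\hat a b \hat a^{-1}) = \phi(b)$ while homogeneity gives $\phi(b^{-1}) = -\phi(b)$, so the first two terms cancel.

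Next I would combine this with subadditivity of the defect estimate across the product: for any elements $g_1,\dots,g_n$ in $\bG$, $|\phi(g_1\cdots g_n) - \sum_j \phi(g_j)| \le (n-1)D(\phi)$, proved by induction. Applying this with $g_j = [\hat a_j, b_j]$ yields
\[
|\phi(x)| \le \sum_{j=1}^n |\phi([\hat a_j,b_j])| + (n-1)D(\phi) \le n D(\phi) + (n-1)D(\phi) = (2n-1)D(\phi).
\]
Hence $|\phi(x)| \le (2\,\cl_{\hG,\bG}(x) - 1)D(\phi)$ for every $x \in [\hG,\bG]$.

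To pass to the stable version, I would apply this inequality to the powers $x^m$, using that $x^m \in [\hG,\bG]$ as well. Homogeneity of $\phi$ gives $|\phi(x^m)| = m|\phi(x)|$, so
\[
m|\phi(x)| \le (2\,\cl_{\hG,\bG}(x^m) - 1)D(\phi).
\]
Dividing by $m$ and letting $m \to \infty$, the $-1$ washes out and $\cl_{\hG,\bG}(x^m)/m \to \scl_{\hG,\bG}(x)$, giving $|\phi(x)| \le 2\,\scl_{\hG,\bG}(x)\,D(\phi)$, which rearranges to the claimed bound $\scl_{\hG,\bG}(x) \ge \frac12 |\phi(x)|/D(\phi)$. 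If $D(\phi) = 0$ then $\phi$ is a homomorphism vanishing on $(\hG,\bG)$-commutators, so $\phi(x) = 0$ and the inequality holds trivially (interpreting the right side as $0$).

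I do not expect a serious obstacle here; this direction of Bavard-type dualities is always the easy one. The only point requiring care — and the place where the $\hG$-invariance hypothesis (as opposed to mere $\bG$-invariance, which every homogeneous quasimorphism on $\bG$ automatically has) is genuinely used — is the estimate $|\phi([\hat a, b])| \le D(\phi)$ for a $(\hG,\bG)$-commutator, since $\hat a$ ranges over $\hG$ rather than $\bG$. Everything else is the routine defect bookkeeping.
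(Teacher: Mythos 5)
Your proposal is correct and follows essentially the same route as the paper's proof: the key estimate $|\phi([\hat a,b])| \le D(\phi)$ via the decomposition $[\hat a,b] = (\hat a b \hat a^{-1})\cdot b^{-1}$ together with $\hG$-invariance and homogeneity, followed by the standard defect bookkeeping on a product of $m$ commutators applied to powers $x^n$ and a passage to the limit. The only differences are cosmetic (notation for the exponent versus the number of commutators, and your explicit treatment of the $D(\phi)=0$ case, which the paper leaves implicit).
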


\begin{proof}

Note that $|\phi([\hg,\bg])|
=|\phi([\hg,\bg]) - \phi(\hg \bg \hg^{-1}) - \phi(g^{-1}) | \leq D(\phi)$
for any $(\hG,\bG)$-commutator $[\hg,\bg] \in [\hG,\bG]$.
If $x^n$ is a product of $(\hG,\bG)$-commutators $c_1,\dots,c_m$, then we obtain an inequality
\[n|\phi(x)|=|\phi(x^n)|\leq (m-1)D(\phi) + \sum_{k=1}^k|\phi(c_k)| < 2m D(\phi).\]
and the lemma follows from it.
\end{proof}

\subsection{Proof of the duality theorem}
Now we give a proof of Theorem \ref{thm:bavard}.
For proving the equality, it is sufficient to prove the inequalities in both directions.
One side follows from Lemma \ref{lem:lower_bound}, thus we prove the other side (Proposition \ref{prop:upper_bound}).
For this purpose, we use the strategy in Calegari--Zhuang's work \cite{CZ} (see also \cite{Ka17}).
Some parts of the proof go through in the same way as the arguments in \cite{Ka17}.
Moreover, some parts are much easier than the ones in \cite{Ka17} because a technical lemma corresponding to \cite[Lemma 2.6]{Ka17} follows immediately in our situation.
Thus, we often omit such parts of the proof.

Set $\Gamma=[\hG,\bG]$ and define a set
\[A_{\Gamma}= \bigsqcup_{k=0}^{\infty}(\Gamma \times \mathbb{R})^k.\]
Let $x_1^{s_1}\cdots x_k^{s_k}$ denote elements of $A_{\Gamma}$,
where $x_1,\ldots, x_k \in \Gamma$ and $s_1,\ldots,s_k \in \mathbb{R}$.
We define a function $\| \cdot \|_{\Gamma} \colon A_{\Gamma} \to \mathbb{R}_{\geq 0}$ by
\[ \| x_1^{s_1}\cdots x_k^{n_k} \|_{\Gamma}= \lim_{n\to \infty} \frac{1}{n} \cl_{\hG,\bG}(x_1^{\lfloor s_1 n \rfloor}\cdots x_k^{\lfloor s_k n \rfloor}), \]
where $\lfloor t \rfloor$ is the integer part of $t \in \mathbb{R}$.
The function $\| \cdot \|_{\Gamma} \colon A_{\Gamma} \to \mathbb{R}_{\geq 0}$ is well-defined
\cite[Proposition 2.1]{Ka17}.

We define some operations on $A_{\Gamma}$.
For elements $\mathsf{ x}=x_1^{s_1}\dots x_k^{s_k}$, $\mathsf{ y}=y_1^{t_1}\dots y_l^{t_l}$ of $A_{\Gamma}$ and a real number $\lambda$,
we define $\mathsf{ x} \star \mathsf{ y}$, $\bar{\mathsf{ x}}$, and $\mathsf{ x}^{(\lambda)}$ by
\[  \mathsf{  x} \star \mathsf{ y}  = x_1^{s_1} \dots x_k^{s_k} y_1^{t_1}\dots y_{l}^{t_l}, \quad   \bar{\mathsf{ x}}  = x_k^{-s_k}\dots x_1^{-s_1},
  \quad {\rm and} \quad   \mathsf{ x}^{(\lambda)} = x_1^{\lambda s_1}\dots x_k^{\lambda s_k}. \]
We define the equivalence relation $ \sim$ on $A_{\Gamma}$ by
$\mathsf{ x} \sim \mathsf{ y}$ if and only if $\| \mathsf{ x}\bar{\mathsf{ y}} \|_{\Gamma} =0$
for $\mathsf{x}, \mathsf{y} \in A_{\Gamma}$.
Let $A$ denote the quotient set $A_{\Gamma} / \sim $.
The function $\| \cdot \|_{\Gamma} \colon A_{\Gamma} \to \mathbb{R}_{\geq 0}$ on $A_{\Gamma}$ induces the function
$\| \cdot \|\colon A \to \mathbb{R}_{\geq 0}$ on $A$.
Let $[\mathsf{ x}] \in A$ denote the equivalence class of  $\mathsf{ x} \in A_{\Gamma}$.
For $\boldsymbol{x}=[\mathsf{ x}]$, $\boldsymbol{y}=[\mathsf{ y}]$ in $A$ and a real number $\lambda$, we define
$\boldsymbol{x}+\boldsymbol{y}$ and $\lambda \boldsymbol{x}$ by
\[  \boldsymbol{x}+\boldsymbol{y}  = [\mathsf{ x} \star \mathsf{y}] \quad {\rm and} \quad   \lambda \boldsymbol{x}  = [ \mathsf{ x}^{(\lambda)}].\]
These operators are well-defined \cite[Proposition 2.2]{Ka17}
 and $(A, \| \cdot \|)$ is a normed vector space \cite[Proposition 2.3]{Ka17}.
By the Hahn-Banach theorem, we obtain the following proposition.

\begin{prop} \label{prop:hahn-banach}
For any $\boldsymbol{x} \in A$,
\[ \| \boldsymbol{x} \| = \sup_{\tilde\phi \in A^*} \frac{\tilde\phi( \boldsymbol{x})}{\| \tilde\phi \|^{\ast}}, \]

where $A^*$ is the dual space of $A$ and $\| \cdot \|^{\ast}$ is the dual norm on $A^*$.
\end{prop}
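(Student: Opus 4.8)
The plan is to obtain Proposition~\ref{prop:hahn-banach} as a direct application of the Hahn–Banach theorem to the normed vector space $(A,\|\cdot\|)$, which has already been established. The statement is the standard duality formula expressing the norm of a vector in terms of the supremum of bounded linear functionals evaluated at it, so almost all of the work has been front-loaded into the construction of $A$; what remains is essentially bookkeeping. Concretely, I would argue as follows.

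\smallskip

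First I would note the trivial inequality $\|\boldsymbol{x}\| \geq \sup_{\tilde\phi \in A^*,\ \tilde\phi \neq 0} \tilde\phi(\boldsymbol{x})/\|\tilde\phi\|^{\ast}$: for any nonzero $\tilde\phi \in A^*$ and any $\boldsymbol{x} \in A$ we have $\tilde\phi(\boldsymbol{x}) \leq |\tilde\phi(\boldsymbol{x})| \leq \|\tilde\phi\|^{\ast}\,\|\boldsymbol{x}\|$ by definition of the dual norm, and dividing by $\|\tilde\phi\|^{\ast}$ gives the bound; taking the supremum over $\tilde\phi$ yields the inequality. (One must also handle the case $\|\boldsymbol{x}\|=0$ separately, where the right-hand side is $0$ as well, and the degenerate case $A=\{0\}$.)

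\smallskip

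For the reverse inequality, fix $\boldsymbol{x} \in A$ with $\|\boldsymbol{x}\| > 0$. Apply the Hahn–Banach theorem in the following form: on the one-dimensional subspace $\RR\boldsymbol{x} \subset A$, define the linear functional $\psi(\lambda\boldsymbol{x}) = \lambda\|\boldsymbol{x}\|$; this is dominated by the seminorm $\|\cdot\|$ since $|\psi(\lambda\boldsymbol{x})| = |\lambda|\,\|\boldsymbol{x}\| = \|\lambda\boldsymbol{x}\|$. Hahn–Banach extends $\psi$ to a linear functional $\tilde\phi$ on all of $A$ with $|\tilde\phi(\boldsymbol{y})| \leq \|\boldsymbol{y}\|$ for all $\boldsymbol{y}$, hence $\tilde\phi \in A^*$ with $\|\tilde\phi\|^{\ast} \leq 1$ (and in fact $=1$ since $\tilde\phi(\boldsymbol{x}) = \|\boldsymbol{x}\| > 0$). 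Then $\tilde\phi(\boldsymbol{x})/\|\tilde\phi\|^{\ast} = \|\boldsymbol{x}\|$, so the supremum on the right-hand side is at least $\|\boldsymbol{x}\|$. Combining the two inequalities gives the equality. Since $A$ is a genuine normed vector space (not merely seminormed) by the cited \cite[Proposition 2.3]{Ka17}, the functional $\|\cdot\|$ is an honest norm and there is no quotient to pass to; the only mild subtlety is that Hahn–Banach as usually stated is for real vector spaces dominated by a sublinear functional, and here the norm serves that role, so no extra care is needed.

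\smallskip

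I do not expect any real obstacle: the content of the proposition is entirely contained in the statement that $(A,\|\cdot\|)$ is a normed vector space, and this has been asserted with a reference. The only thing worth flagging in the write-up is the trivial-case analysis ($\|\boldsymbol{x}\|=0$, where both sides vanish, or $A$ trivial), after which the argument is the textbook proof of the norm–dual-norm duality. So the proof will be two or three lines invoking Hahn–Banach, and the "main obstacle" is really just ensuring the conventions on $A^*$ and $\|\cdot\|^{\ast}$ match those used implicitly here.
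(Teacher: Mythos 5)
Your proposal is correct and takes the same route as the paper: the paper's entire "proof" of this proposition is the phrase preceding its statement, "By the Hahn-Banach theorem, we obtain the following proposition," and your write-up is simply the standard textbook elaboration of that one-line invocation. The details you supply (the trivial direction via the definition of the dual norm, and the reverse via extending a functional from the span of $\boldsymbol{x}$) are exactly what the authors are leaving implicit.
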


On the other hand, we can construct a $\hG$-invariant quasimorphism from an element of $A^*$ in the following way.
\begin{prop} \label{prop:G-inv_hqm}
  For $\tilde\phi \in A^*$, the function $\phi \colon \Gamma \to \mathbb{R}$ defined by $\phi(x)=\tilde\phi([x^1])$ is a $\hG$-invariant homogeneous quasimorphism.
  Moreover, $D(\phi) \leq \frac12 \| \tilde\phi  \|^*$.
\end{prop}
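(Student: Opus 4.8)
The plan is to verify directly, for the function $\phi(x)=\tilde\phi([x^1])$, the three things asserted—homogeneity; the quasimorphism inequality with the defect estimate; and $\hG$-invariance—reusing the computations of \cite{Ka17} for the first two (which never involve $\hG$) and giving a short independent argument for the third, which is where our setting simplifies. For homogeneity, the key observation is that $[(x^n)^1]=n[x^1]$ in $A$ for every $x\in\Gamma$ and $n\in\ZZ$; this holds because $\|(x^n)^1\star\overline{(x^1)^{(n)}}\|_{\Gamma}=\lim_{k\to\infty}\tfrac1k\cl_{\hG,\bG}(x^{nk}x^{-nk})=0$, so the two classes coincide. Applying the linear functional $\tilde\phi$ then gives $\phi(x^n)=\tilde\phi(n[x^1])=n\phi(x)$, exactly as in the corresponding step of \cite{Ka17}.

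Next I would handle the quasimorphism inequality together with the bound on the defect. For $x,y\in\Gamma$ the definitions of the operations on $A$ give
\[\phi(xy)-\phi(x)-\phi(y)=\tilde\phi\bigl([(xy)^1\star x^{-1}\star y^{-1}]\bigr),\]
hence $|\phi(xy)-\phi(x)-\phi(y)|\le\|\tilde\phi\|^{\ast}\cdot\bigl\|[(xy)^1\star x^{-1}\star y^{-1}]\bigr\|$, and it remains to show the last norm is at most $\tfrac12$. By the definition of $\|\cdot\|_\Gamma$ this norm equals $\lim_{n\to\infty}\tfrac1n\cl_{\hG,\bG}\bigl((xy)^nx^{-n}y^{-n}\bigr)$, and here one invokes the classical fact that $(xy)^nx^{-n}y^{-n}$ can be written as a product of roughly $n/2$ commutators whose entries are words in $x$ and $y$; since $x,y\in\Gamma\subset\bG\subset\hG$, each such commutator is a $(\hG,\bG)$-commutator, so indeed $\lim_{n\to\infty}\tfrac1n\cl_{\hG,\bG}\bigl((xy)^nx^{-n}y^{-n}\bigr)\le\tfrac12$. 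This estimate, carried out as in \cite{Ka17} (ultimately following \cite{CZ}), is the one genuinely technical point, even though it does not involve $\hG$ at all; it yields $D(\phi)\le\tfrac12\|\tilde\phi\|^{\ast}$ and in particular that $\phi$ is a homogeneous quasimorphism.

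Finally I would prove $\hG$-invariance, the part that replaces \cite[Lemma 2.6]{Ka17} and is immediate here. First, $\Gamma=[\hG,\bG]$ is normal in $\hG$, since a conjugate of a $(\hG,\bG)$-commutator satisfies $\hg_0[\hg,\bg]\hg_0^{-1}=[\hg_0\hg\hg_0^{-1},\hg_0\bg\hg_0^{-1}]$, again a $(\hG,\bG)$-commutator because $\bG$ is normal in $\hG$; in particular $\hg x\hg^{-1}\in\Gamma$ whenever $x\in\Gamma$ and $\hg\in\hG$. For such $x$ and $\hg$ one has
\[\phi(\hg x\hg^{-1})-\phi(x)=\tilde\phi\bigl([(\hg x\hg^{-1})^1\star x^{-1}]\bigr),\]
and $\bigl\|[(\hg x\hg^{-1})^1\star x^{-1}]\bigr\|=\lim_{n\to\infty}\tfrac1n\cl_{\hG,\bG}\bigl(\hg x^n\hg^{-1}x^{-n}\bigr)=\lim_{n\to\infty}\tfrac1n\cl_{\hG,\bG}\bigl([\hg,x^n]\bigr)=0$, because $x^n\in\bG$ makes $[\hg,x^n]$ a single $(\hG,\bG)$-commutator, so $\cl_{\hG,\bG}([\hg,x^n])\le1$. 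Hence $[(\hg x\hg^{-1})^1]=[x^1]$ in $A$ and $\phi(\hg x\hg^{-1})=\phi(x)$. Assembling the three steps proves the proposition; the only step of real substance is the $\tfrac12$-bound in the second paragraph, and this is where I would lean on \cite{Ka17}.
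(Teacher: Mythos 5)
Your proof is correct and follows essentially the same route as the paper: the same reduction of the defect bound to writing $(xy)^{2n}x^{-2n}y^{-2n}$ as $n$ ordinary commutators (hence $(\hG,\bG)$-commutators since the entries lie in $\bG$), the same homogeneity computation via $(x^n)^1\sim x^n$, and the same observation that $\cl_{\hG,\bG}([\hg,x^n])\le 1$ kills the invariance defect. The only addition is your explicit remark that $\Gamma=[\hG,\bG]$ is normal in $\hG$ so that $\phi(\hg x\hg^{-1})$ is defined; the paper uses this tacitly.
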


\begin{proof}
First, we prove that $\phi$ is a quasimorphism. For any $x, y \in \Gamma$,

\begin{align*}
  & \quad |\phi(xy) - \phi(x) -\phi (y)  | \\
  &= |\tilde\phi([(xy)^1]) - \tilde\phi([x^1]) - \tilde\phi([y^1])| \\
  &= |\tilde\phi([(xy)^1]+(-1)[x^1]+(-1)[y^1])| \\
  & \leq  \| \tilde\phi  \|^* \| (xy)^1 \star x^{-1} \star y^{-1} \|_{\Gamma}   \\
  &=\| \tilde\phi  \|^* \cdot \lim_{n\to \infty} \frac{1}{n}\cl_{\hG,\bG}((xy)^n x^{-n} y^{-n}). \\
\end{align*}

Since $(xy)^{2n}x^{-2n}y^{-2n}$ is a product of $n$ commutators
(see \cite[Lemma 2.24]{Ca} for example),
\[ \lim_{n\to \infty} \frac{1}{n}\cl_{\hG,\bG}((xy)^n x^{-n} y^{-n})=\lim_{n\to \infty} \frac{1}{2n}\cl_{\hG,\bG}((xy)^{2n} x^{-2n} y^{-2n})\leq \frac12.\]
Hence
\[|\phi(xy) - \phi(x) -\phi (y)  | \leq \frac12 \| \tilde\phi  \|^*.\]
Therefore, $\phi$ is a quasimorphism and $D(\phi) \leq \frac12 \| \tilde\phi  \|^*$.

Next, we prove that $\phi$ is homogenous.
Since $(x^n)^1 \sim x^n$ for any $x \in [\hG,\bG]$ and any integer $n$,
\[\phi(x^n)
=\tilde\phi([(x^n)^1])
=\tilde\phi([x^n])
=\tilde\phi(n[x^1])\]
for any $x \in \Gamma$ and any integer $n$.
Since $\tilde\phi \colon A \to \mathbb{R}$ is a linear map,
\[\tilde\phi(n[x^1])
=n\tilde\phi([x^1])
=n \phi(x).\]
for any $x \in \Gamma$ and any integer $n$.
Hence $\phi$ is homogeneous.

Finally, we prove that $\phi$ is $G$-invariant.
For any $\hg \in \hG$ and any $x \in \Gamma\subset G$,

\begin{align*}
  & \quad |\phi(\hg x \hg^{-1}) - \phi(x) | \\
  & = |\tilde\phi([(\hg x \hg^{-1})^1]) - \tilde\phi([x^1])| \\
  & = |\tilde\phi([(\hg x \hg^{-1})^1] +(-1) [x^1])| \\
  & \leq \| \tilde\phi  \|^* \| ( \hg x \hg^{-1})^1 \star x^{-1} \|_{\Gamma}   \\
  & = \| \tilde\phi  \|^* \cdot \lim_{n\to \infty} \frac{1}{n}\cl_{\hG,\bG}((\hg x \hg^{-1})^n  x^{-n}) \\
  & = \| \tilde\phi  \|^* \cdot \lim_{n\to \infty} \frac{1}{n}\cl_{\hG,\bG}([\hg,x^n]) \\
  & = 0.
\end{align*}

Therefore, $\phi$ is $\hG$-invariant.
We complete the proof.
\end{proof}
As a corollary of Propositions \ref{prop:hahn-banach} and \ref{prop:G-inv_hqm}, we have the following proposition.
Theorem \ref{thm:bavard} follows from this proposition and Lemma \ref{lem:lower_bound}.

\begin{prop}\label{prop:upper_bound}
  For any $x \in [\hG,\bG]$,
  \[ \scl_{\hG,\bG}(x) \leq \sup_{\phi \in Q([\hG, \bG])^{\hG}}  \frac12\frac{|\phi(x)|}{D(\phi)}. \]
\end{prop}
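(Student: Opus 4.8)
The plan is to derive Proposition \ref{prop:upper_bound} as a direct consequence of Propositions \ref{prop:hahn-banach} and \ref{prop:G-inv_hqm}, exactly as the text announces. Fix $x \in [\hG,\bG] = \Gamma$ and consider the element $\boldsymbol{x} = [x^1] \in A$. First I would observe that $\|\boldsymbol{x}\| = \|x^1\|_\Gamma = \lim_{n\to\infty}\frac{1}{n}\cl_{\hG,\bG}(x^n) = \scl_{\hG,\bG}(x)$, so it suffices to bound $\|\boldsymbol{x}\|$ from above by the supremum on the right-hand side. Applying Proposition \ref{prop:hahn-banach} to $\boldsymbol{x}$, we get $\|\boldsymbol{x}\| = \sup_{\tilde\phi \in A^*}\frac{\tilde\phi(\boldsymbol{x})}{\|\tilde\phi\|^*}$, where without loss of generality we may restrict to $\tilde\phi$ with $\|\tilde\phi\|^* \neq 0$ (if $\|\boldsymbol{x}\| = 0$ the inequality is trivial since the right-hand supremum is over a set of nonnegative numbers, or is a supremum over an empty set which by convention we may treat as $0$, so assume $\|\boldsymbol{x}\| > 0$).

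Next, given such a $\tilde\phi \in A^*$, I would feed it into Proposition \ref{prop:G-inv_hqm} to produce the homogeneous $\hG$-invariant quasimorphism $\phi \colon \Gamma \to \RR$, $\phi(y) = \tilde\phi([y^1])$, which satisfies $D(\phi) \leq \frac12\|\tilde\phi\|^*$. Then for this $\phi$ we have
\[
\frac12\frac{|\phi(x)|}{D(\phi)} \geq \frac12\frac{|\tilde\phi([x^1])|}{\frac12\|\tilde\phi\|^*} = \frac{|\tilde\phi(\boldsymbol{x})|}{\|\tilde\phi\|^*} \geq \frac{\tilde\phi(\boldsymbol{x})}{\|\tilde\phi\|^*}.
\]
Here I should be slightly careful about the case $D(\phi) = 0$: if $\phi$ is a genuine homomorphism then $\phi(x) = 0$ because $x$ is a product of $(\hG,\bG)$-commutators and every such commutator lies in $[\bG,\bG] \subset \ker\phi$ for a homomorphism $\phi$ (more precisely $\phi([\hg,\bg]) = \phi(\hg\bg\hg^{-1}) - \phi(\bg) = \phi(\bg) - \phi(\bg) = 0$ using $\hG$-invariance), so $\tilde\phi(\boldsymbol{x}) = \phi(x) = 0$ and that term contributes nothing to the supremum; hence we may assume $D(\phi) > 0$ when passing to the ratio. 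Taking the supremum over all admissible $\tilde\phi$ on the left via Proposition \ref{prop:hahn-banach} yields $\scl_{\hG,\bG}(x) = \|\boldsymbol{x}\| \leq \sup_{\phi \in Q(\Gamma)^{\hG}}\frac12\frac{|\phi(x)|}{D(\phi)}$, which is the claim.

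The only genuine subtlety, and the step I would treat most carefully, is the bookkeeping of the degenerate cases (the vanishing of $\|\tilde\phi\|^*$ or of $D(\phi)$) so that the passage between the two suprema is legitimate; everything else is a mechanical chaining of the two cited propositions. Combined with Lemma \ref{lem:lower_bound}, which gives the reverse inequality $\scl_{\hG,\bG}(x) \geq \frac12\frac{|\phi(x)|}{D(\phi)}$ for every $\phi \in Q(G)^{\hG}$ (note that restriction gives $Q(G)^{\hG} \hookrightarrow Q(\Gamma)^{\hG}$, and since $\Gamma = \bG$ under the hypothesis $\bG = [\hG,\bG]$ of Theorem \ref{thm:bavard} the two spaces coincide there), this establishes the equality in Theorem \ref{thm:bavard}.
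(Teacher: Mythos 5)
Your argument is correct and follows the same route as the paper: write $\scl_{\hG,\bG}(x)=\|[x^1]\|$, apply Proposition \ref{prop:hahn-banach}, and for each $\tilde\phi\in A^*$ use Proposition \ref{prop:G-inv_hqm} to produce $\phi\in Q(\Gamma)^{\hG}$ with $\phi(x)=\tilde\phi([x^1])$ and $D(\phi)\le\frac12\|\tilde\phi\|^*$. The only difference is that you spell out the bookkeeping for the degenerate cases $\|\tilde\phi\|^*=0$ and $D(\phi)=0$, which the paper leaves implicit.
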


\begin{proof}
  By Proposition \ref{prop:hahn-banach} and \ref{prop:G-inv_hqm}, since $D(\phi) \leq \frac12 \| \phi \|^*$,
  \[ \scl_{\hG,\bG} (x) = \| x^1 \|= \sup_{\tilde\phi \in A^*} \frac{\tilde\phi( [x^1] )}{\| \tilde\phi \|^*}
\leq \sup_{\phi } \frac{1}{2}\frac{\phi(x)}{D(\phi)}. \qedhere\]
\end{proof}

\section{Comparison of commutator lengths}\label{comparison section}

We compare the $(\hG, \bG)$-commutator length $\cl_{\hG,\bG}$ with
the ordinary commutator lengths $\cl_{\hG}$ of $\hG$ and $\cl_{\bG}$ of $\bG$.
By definition,
$\cl_{\hG} \leq \cl_{\hG,\bG}$
on $[\hG,\bG]$, and
$ \cl_{\hG,\bG} \leq \cl_{\bG}$
on $[\bG,\bG]$.

\subsection{A condition of quasimorphisms to be extended}\label{extend condition subsection}

First, we give a proof of Proposition \ref{prop:section_hom homog}.
It also follows from the result of Shtern \cite[Theorem 3]{Sh}.
However, we provide an estimate of the defect in order to prove Proposition \ref{section and scl comparison}.

\begin{proof}[Proof of Proposition \ref{prop:section_hom homog}]
  Let $\pi \colon \hG \to \hG/\bG $ be the natural projection.
  For $\hg \in \hG$, we set $q_{\hg} = s(\pi(\hg))$ and $\bg_{\hg} = q_{\hg}^{-1}  \hg   \in \bG$ . We define the function
  $\phi^\prime \colon \hG \to \mathbb{R}$
by
$\phi^\prime(\hg)=\phi(\bg_{\hg})$.
Since $s \circ \pi$ is a homomorphism, $q_{\hg_1 \hg_2}= q_{\hg_1} q_{\hg_2}$
for $\hg_1, \hg_2 \in \hG$.
Thus

\begin{align*}
  & |\phi^\prime(\hg_1 \hg_2)-\phi^\prime(\hg_1)-\phi^\prime(\hg_2)| \\
 &=  |\phi(\bg_{\hg_1 \hg_2})-\phi(\bg_{\hg_1})-\phi(\bg_{\hg_2})| \\
 &=  |\phi(q_{\hg_2}^{-1}q_{\hg_1}^{-1} \hg_1 \hg_2)-\phi(q_{\hg_1}^{-1}\hg_1)-\phi(q_{\hg_2}^{-1} \hg_2)| \\
 &=  |\phi(q_{\hg_1}^{-1} \hg_1 \hg_2 q_{\hg_2}^{-1})-\phi(q_{\hg_1}^{-1}\hg_1)-\phi(\hg_2 q_{\hg_2}^{-1})| \\
 & \leq D(\phi).
\end{align*}
Hence, $\phi^\prime$ is a quasimorphism with $D(\phi^\prime)\leq D(\phi)$.
Define the function $\hat\phi \colon \hG \to \RR$, which is called the \emph{homogenization} of $\phi^\prime$, by
$\hat\phi(\hg)=\lim_{n\to \infty} \phi(\hg^n)/n$ for $\hg \in \hG$.
It is known that $\hat\phi$ a homogeneous quasimorphism and
$D(\hat\phi) \leq 2 D(\phi^\prime)$ (\cite{Ca}, Corollary 2.59).
By $\pi\circ s$ is the identity map, $\phi^\prime$ is an extension of $\phi$ to $\hG$.
Since $\phi$ is a homogeneous quasimorphism, $\hat\phi$ is also an extension of $\phi$ to $\hG$.
Hence, we complete the proof.
\end{proof}

\subsection{$\cl_{\hG,\bG}$ vs $\cl_{\hG}$}

Now we prove Proposition \ref{section and scl comparison} which states that
$\scl_{\hG,\bG}$ and $\scl_{\hG}$ are equivalent if there exists a section homomorphism.

\begin{proof}[Proof of Proposition \ref{section and scl comparison}]
The inequality $\scl_{\hG}(x)\leq\scl_{\hG,\bG}(x)$ immediately follows from the definitions of norms.
Thus, we prove $\scl_{\hG,\bG}(x)\leq 2\scl_{\hG}(x)$ below.

  By Theorem \ref{thm:bavard}, for any $\epsilon>0$, there exists a $\hG$-invariant homogeneous quasimorphism $\phi$ such that
  \[\scl_{\hG,\bG} (x) -\epsilon \leq \frac{1}{2}\frac{\phi(x)}{D(\phi)}. \]
By Proposition \ref{prop:section_hom homog}, there exists an extension
$\hat\phi$ of $\phi$ which is homogeneous and $D(\hat\phi) \leq 2 D(\phi^\prime)$ .
Therefore,

  \[\frac{1}{2}\frac{\phi(x)}{D(\phi)} \leq \frac{\hat\phi(x)}{D(\hat\phi)}\leq 2\scl_{\hG}(x).\]
Since $\epsilon$ can be taken arbitrary small, we have finished the proof.
\end{proof}

\subsection{$\cl_{\hG,\bG}$ vs $\cl_{\bG}$}
We give an example of a pair $(\hG,\bG)$ of groups such that
$\scl_{\hG,\bG}$ and $\scl_{\bG}$ are not equivalent even if the quotient group $\hG/\bG$ is a finite group.

Let $B_3$ and $P_3$ denote the braid group and the pure braid group on 3 strands, respectively.
Set $\Delta=\sigma_1 \sigma_2 \sigma_1 =\sigma_2 \sigma_1 \sigma_2 $, where $\sigma_1$ and $\sigma_2$ are the Artin generators.
Note that $\Delta^2$ is the full twist.
Set $x=\sigma_1^2$, $y=\sigma_2^2$ and $z=\Delta^2$.
Then $P_3$ has a presentation
\[ P_3=\langle x,y,z \; | \; xz=zx,yz=zy  \rangle \cong F_2 \times \mathbb{Z}.\]

\begin{prop}\label{clhgbg and clbg}
For $\hG=B_3$ and $\bG=P_3$, there exists an element $\alpha \in [\bG,\bG]$
such that $\scl_{\hG,\bG}(\alpha)=0$ and $\scl_{\bG}(\alpha)>0$.
\end{prop}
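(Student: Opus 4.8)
The plan is to exploit the splitting $P_3 \cong F_2 \times \mathbb{Z}$, where $F_2 = \langle x, y\rangle$ and $\mathbb{Z} = \langle z \rangle$ with $z = \Delta^2$ central in $B_3$. First I would choose a specific element $\alpha \in [F_2, F_2] \subset [P_3,P_3]$ on which the stable commutator length in $F_2$ — hence in $P_3$, since $\mathbb{Z}$ is a direct factor — is known to be positive; the standard choice is $\alpha = [x,y]$, for which $\scl_{F_2}([x,y]) = 1/2$ (Bavard duality together with a quasimorphism, e.g.\ a Brooks quasimorphism or the Rhemtulla--type counting argument, gives this). Since $\scl_{P_3} = \scl_{F_2 \times \mathbb{Z}}$ restricts to $\scl_{F_2}$ on the $F_2$ factor, we get $\scl_{P_3}(\alpha) = 1/2 > 0$. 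This is the easy half.

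The substance is showing $\scl_{B_3, P_3}(\alpha) = 0$. By the $\hat G$-invariant Bavard duality (Theorem \ref{thm:bavard}, whose hypothesis $\bG = [\hG,\bG]$ I would need to verify — note $\Delta \sigma_i \Delta^{-1} = \sigma_{3-i}$, so $[\Delta, \sigma_1^2] = \sigma_2^2 \sigma_1^{-2}$ and similar $(\hG,\bG)$-commutators generate $P_3$, or one cites that $[B_3,P_3] = P_3$ directly), it suffices to prove that every $B_3$-invariant homogeneous quasimorphism $\phi$ on $P_3$ vanishes on $\alpha = [x,y]$. The key point is that conjugation by $\Delta$ acts on $P_3 = F_2 \times \mathbb{Z}$ by swapping $x \leftrightarrow y$ (up to the central factor $z$, which is irrelevant to a homogeneous quasimorphism since $\phi(wz^k) = \phi(w)$ follows from homogeneity and centrality). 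So $B_3$-invariance forces $\phi(x w y) $–type symmetry; concretely $\phi$ must be invariant under the automorphism $\tau$ of $F_2$ exchanging the two generators. I would then argue that $[x,y] = x y x^{-1} y^{-1}$ is sent by $\tau$ to $[y,x] = [x,y]^{-1}$, and hence
\[
\phi([x,y]) = \phi(\tau([x,y])) = \phi([x,y]^{-1}) = -\phi([x,y]),
\]
so $\phi([x,y]) = 0$. Combined with Theorem \ref{thm:bavard} this gives $\scl_{B_3,P_3}(\alpha) = 0$.

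The main obstacle is the bookkeeping around the central $\mathbb{Z} = \langle \Delta^2 \rangle$ and making the "conjugation by $\Delta$ swaps $x$ and $y$" statement precise: $\Delta \sigma_1 \Delta^{-1} = \sigma_2$ and $\Delta \sigma_2 \Delta^{-1} = \sigma_1$, hence $\Delta x \Delta^{-1} = y$ and $\Delta y \Delta^{-1} = x$, so conjugation by $\Delta$ restricts to an \emph{honest} swap of the generators of the $F_2$ factor (no correction by $z$ is even needed for $x,y$ themselves). One should double-check that $\Delta \in B_3$ and that this inner automorphism preserves $P_3$ — it does, since $P_3 \lhd B_3$. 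A secondary point is confirming $\scl_{F_2}([x,y]) > 0$; rather than reprove it I would cite Bavard duality plus the existence of a homogeneous quasimorphism on $F_2$ not killing $[x,y]$ (e.g.\ Calegari's book \cite{Ca}). Everything else — verifying the hypothesis of Theorem \ref{thm:bavard}, and that a homogeneous quasimorphism on $F_2 \times \mathbb{Z}$ ignores the $\mathbb{Z}$ — is routine.
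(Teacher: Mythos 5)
The core of your argument is exactly the paper's: take $\alpha=[x,y]=[\sigma_1^2,\sigma_2^2]$, note $\Delta x\Delta^{-1}=y$ and $\Delta y\Delta^{-1}=x$ so $\Delta\alpha\Delta^{-1}=\alpha^{-1}$, conclude that every $B_3$-invariant homogeneous quasimorphism vanishes on $\alpha$, and get positivity of $\scl_{P_3}(\alpha)$ from the retraction $P_3\cong F_2\times\ZZ\to F_2$. That is all correct and is what the paper does (the paper makes the positivity half explicit with a Brooks counting quasimorphism pulled back through $\mathrm{pr}_1$, but that is the same content as your citation of $\scl_{F_2}([x,y])>0$).

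However, there is a genuine error in the step where you plan to invoke Theorem \ref{thm:bavard}: its hypothesis $\bG=[\hG,\bG]$ is \emph{false} for $\hG=B_3$, $\bG=P_3$, and the suggestion that ``one cites that $[B_3,P_3]=P_3$ directly'' does not hold. To see this, pass to $H_1(P_3)\cong\ZZ^3$ with basis the classes of $A_{12},A_{13},A_{23}$; conjugation by $B_3$ factors through $S_3=B_3/P_3$, which acts by permuting the three coordinates, so the image of $[B_3,P_3]$ in $H_1(P_3)$ is contained in the coordinate-sum-zero sublattice. Hence $P_3/[B_3,P_3]$ surjects onto $\ZZ$ and is not trivial; for instance $\sigma_1^2\notin[B_3,P_3]$. (The fact that $[\Delta,\sigma_1^2]=\sigma_2^2\sigma_1^{-2}$ and its conjugates lie in $[B_3,P_3]$ only gives the sum-zero part.) So Theorem \ref{thm:bavard} cannot be applied as stated. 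The repair is small and is exactly what the paper does: you only need the upper-bound half of the duality, which is Proposition \ref{prop:upper_bound} and holds for any $x\in[\hG,\bG]$ with \emph{no} assumption that $\bG=[\hG,\bG]$. Since $\alpha\in[P_3,P_3]\subset[B_3,P_3]$ and every $B_3$-invariant homogeneous quasimorphism on $[B_3,P_3]$ kills $\alpha$ by the $\Delta$-inversion argument, Proposition \ref{prop:upper_bound} already gives $\scl_{B_3,P_3}(\alpha)=0$. As a minor side remark, the parenthetical ``$\phi(wz^k)=\phi(w)$ follows from homogeneity and centrality'' is also incorrect (one only gets $\phi(wz^k)=\phi(w)+k\phi(z)$), but you correctly observe afterwards that no $z$-correction is needed for $x$ and $y$, so this does not affect the argument.
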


To prove Proposition \ref{clhgbg and clbg}, we use Brooks' \emph{counting quasimorphism} on free groups \cite{Br}.
Let $F_2=\langle x,y \rangle$ be a free group of rank 2
and $w$ a reduced word in $\{x ^{\pm1},y^{\pm1} \}$.
A \emph{counting function} $c_w \colon F_2 \to \ZZ$ is defined as $c_w (g)$ being the maximal number of disjoint copies of  $w$ in the reduced representative of $g\in F_2$.
A \emph{counting quasimorphism} is a function of the form
\[ h_w(g)=c_w (g)-c_{w^{-1}}(g). \]

\begin{proof}[Proof of Proposition \ref{clhgbg and clbg}]

We set $\alpha=[x,y]=[\sigma_1^2, \sigma_2^2]$.
Since $\Delta \alpha \Delta^{-1}= [\sigma_2^2,\sigma_1^2]=\alpha^{-1}$,
$\phi(\alpha)$ is equal to zero for every $\hG$-invariant homogeneous quasimorphism $\phi$ on $[\hG,\bG]$.
Thus, by Proposition \ref{prop:upper_bound}, $\scl_{\hG,\bG}(\alpha)=0$.

On the other hand, we can prove that $\scl_{\bG}(\alpha)>0$ as follows.
Set $\phi= \bar{h}_w \circ {\rm pr}_1$, where $w=xyx^{-1}y^{-1}$
and ${\rm pr}_1 \colon  P_3 \cong F_2 \times \mathbb{Z} \to F_2$ is the first projection homomorphism.
Since $c_w([x,y]^n)=n$ and $c_{w^{-1}}([x,y]^n)=0$,
\[\bar{\phi}(\alpha)=\bar{h}_w([x,y])=1.\]
Therefore, by Theorem \ref{original Bavard},
\[\scl_{\bG} (\alpha) \geq \frac{1}{2}\frac{1}{D(\bar\phi)}>0.\qedhere\]

\end{proof}

\section{Non-extendability of partial quasimorphisms}\label{partial_qm_section}

We prepare some notions on partial quasimorphisms.
Burago, Ivanov and Polterovich defined the notion of conjugation-invariant norm.

\begin{definition}[{\cite{BIP}}]\label{conjugation-invariant norm}
Let $G$ be a group. A function $\nu\colon G\to \mathbb{R}$ is called a \textit{conjugation-invariant norm} on $G$ if $\nu$ satisfies the following axioms:
\begin{enumerate}
  \item $\nu(1)=0$;
  \item $\nu(f)=\nu(f^{-1})$ for every $f\in G$;
  \item $\nu(fg)\leq \nu(f)+\nu(g)$ for every $f,g\in G$;
  \item $\nu(f)=\nu(gfg^{-1})$ for every $f,g\in G$;
  \item $\nu(f)>0$ for every $f\neq 1\in G$.
\end{enumerate}
\end{definition}

\begin{example}\label{trivial norm}
We define a function $\nu_0\colon G\to \mathbb{R}$ by
\begin{equation*}
\nu_0(g)=
\begin{cases}
0 & (g=1), \\
1 & (\text{otherwise}).
\end{cases}
\end{equation*}
Then, $\nu_0$ is a conjugation-invariant norm.
\end{example}

\begin{example}\label{fragmentation norm}
Let $G$ be a group and $H$ a subgroup of $G$.
We define the fragmentation norm $\nu_H$ with respect to $H$ by for an element $f$ of $G$,
 \[\nu_H(f)=\min\{k;\exists g_1\ldots,g_k\in G, \exists h_1,\ldots h_k\in H\text{ such that }f=g_1h_1g_1^{-1}\cdots g_kh_kg_k^{-1}\}.\]
If there is no such decomposition of $f$, we set $\nu_H(f)=+\infty$.
If $\nu(f)<+\infty$ for any $f\in G$, $\nu$ is a conjugation-invariant norm.
\end{example}

In \cite{EP06}, Entov and Polterovich essentially considered a concept of partial quasimorphism (relative quasimorphism, norm-controlled quasimorphism).

\begin{definition}\label{qm rt nu}
Let $G$ be a group and $\nu$ a conjugation-invariant norm on $G$.
A function $\phi\colon G\to\mathbb{R}$ is called a \textit{$\nu$-quasimorphism} (\textit{quasimorphism relative to $\nu$} or \textit{quasimorphism controlled by $\nu$}) if there exists a positive number $C$ such that for any elements $f$, $g$ of $G$,
\[|\phi(fg)-\phi(f)-\phi(g)|<C\,\min\{\nu(f),\nu(g)\}.\]
$\phi$ is called \textit{semi-homogeneous} if $\phi(f^n)=n\phi(f)$ for any element $f$ of $G$ and any non-negative integer $n$.
\end{definition}

We note that any quasimorphism is a $\nu_0$-quasimorphism.

\begin{definition}\label{def of extendability}
Let $G$ be a normal subgroup of a group $\hat{G}$ and $\nu\colon G\to\RR$ a conjugation-invariant norm on $G$.
A semi-homogeneous $\nu$-quasimorphism $\mu$ on $G$ is called \textit{extendable to $\hG$} if there are a conjugation-invariant norm $\hat{\nu}$ on $\hat{G}$ and a semi-homogeneous $\hat{\nu}$-quasimorphism $\hat{\mu}$ on $\hat{G}$ such that $\hat{\mu}(g)=\mu(g)$ for any $g\in G$.
A homogeneous quasimorphism $\mu$ on $G$ is called \textit{non-extendable to $\hG$} otherwise.
\end{definition}

We provide a convenient lemma for proving non-extendability.

\begin{lem}\label{general nonextendability of qm}
Let $\mu$ be a semi-homogeneous $\hG$-invariant $\nu$-quasimorphism on $G$.
Let $f$, $g$ be elements of $\hG$ satisfying
\begin{itemize}
\item
$f(gf^{-1}g^{-1})=(gf^{-1}g^{-1})f$,
\item
$[f,g]\in \bG$,
\item
$\mu([f,g])\neq0$.
\end{itemize}
Then,
$\mu$ is non-extendable to $\hG$.
\end{lem}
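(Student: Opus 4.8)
The plan is to argue by contradiction. Suppose $\mu$ were extendable to $\hG$: then there would exist a conjugation-invariant norm $\hat\nu$ on $\hG$, a semi-homogeneous $\hat\nu$-quasimorphism $\hat\mu$ on $\hG$ with $\hat\mu|_{\bG}=\mu$, and a constant $C>0$ with $|\hat\mu(ab)-\hat\mu(a)-\hat\mu(b)|\le C\min\{\hat\nu(a),\hat\nu(b)\}$ for all $a,b\in\hG$. I would first record the elementary fact that every semi-homogeneous $\hat\nu$-quasimorphism is conjugation-invariant: applying the defining inequality to the products $a\cdot(b^na^{-1})$ and $b^n\cdot a^{-1}$ and using $\hat\nu(a^{-1})=\hat\nu(a)$ yields $|n\hat\mu(aba^{-1})-n\hat\mu(b)-\hat\mu(a)-\hat\mu(a^{-1})|\le 2C\hat\nu(a)$ for every $n\ge 0$ (semi-homogeneity is used at $aba^{-1}$ and at $b$), and dividing by $n$ and letting $n\to\infty$ forces $\hat\mu(aba^{-1})=\hat\mu(b)$.

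The decisive algebraic input is the identity $[f,g]^n=[f^n,g]$, valid for all $n\ge 0$, and this is exactly where the hypothesis $f(gf^{-1}g^{-1})=(gf^{-1}g^{-1})f$ enters: setting $h=gf^{-1}g^{-1}$ one has $[f,g]=fh$ with $f$ and $h$ commuting, hence $[f,g]^n=f^nh^n=f^n(gf^{-n}g^{-1})=[f^n,g]$. Since $[f,g]\in\bG$ and $\bG$ is a subgroup of $\hG$, all powers $[f,g]^n$ lie in $\bG$, so semi-homogeneity of $\mu$ together with $\hat\mu|_{\bG}=\mu$ give $\hat\mu([f^n,g])=\hat\mu([f,g]^n)=\mu([f,g]^n)=n\,\mu([f,g])$, which is unbounded in $n$ because $\mu([f,g])\ne 0$.

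On the other hand I would bound $\hat\mu([f^n,g])$ independently of $n$ by peeling off a single factor $g^{-1}$: writing $[f^n,g]=(f^ngf^{-n})\cdot g^{-1}$, the defining inequality for $\hat\mu$ controls the error by $\hat\nu(g^{-1})=\hat\nu(g)$, and conjugation-invariance of $\hat\mu$ gives $\hat\mu(f^ngf^{-n})=\hat\mu(g)$, so that
\[ |\hat\mu([f^n,g])|\le |\hat\mu(g)|+|\hat\mu(g^{-1})|+C\hat\nu(g) \]
for every $n$. This flatly contradicts the linear growth obtained in the previous paragraph, so $\mu$ cannot be extendable, which proves the lemma. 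The one point requiring care is precisely this last splitting: one must separate out the bounded factor $g^{\pm 1}$ rather than $f^{\pm n}$, since $\hat\nu(f^n)$ may grow linearly in $n$ and would render the estimate useless; the conjugation-invariance established in the first step (which, I emphasize, holds for semi-homogeneous, not merely homogeneous, $\hat\nu$-quasimorphisms) is exactly what makes this choice effective.
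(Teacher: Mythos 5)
Your argument is correct and follows essentially the same route as the paper: establish conjugation-invariance of a semi-homogeneous $\hat\nu$-quasimorphism by a limiting argument, exploit the identity $[f,g]^n=[f^n,g]$ coming from the commutation hypothesis, and then bound $\hat\mu([f^n,g])$ uniformly in $n$ by peeling off the $g^{-1}$ factor (the paper packages the last two steps as Lemmas~\ref{conjinv of qm} and~\ref{extendbility implies vanishing}, concluding $\hat\mu([f,g])=0$ rather than phrasing it as a contradiction, but the content is identical). Your explicit warning that one must split off $g^{\pm1}$ rather than $f^{\pm n}$ is exactly the point the paper's argument relies on.
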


Lemma \ref{general nonextendability of qm} immediately follows from the following lemma.


\begin{lem}\label{extendbility implies vanishing}
Let $\nu$ be a conjugation-invariant norm on a group $\hat{G}$, $\hat{\mu}$ a semi-homogeneous $\nu$-quasimorphism on a group $\hat{G}$ and
$f$, $g$ elements of $\hat{G}$ satisfying
$(fgf^{-1})g^{-1}=g^{-1}(fgf^{-1})$.
Then, $\hat{\mu}([f,g])=0$.
\end{lem}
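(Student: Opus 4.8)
The plan is to use the commutation hypothesis to convert powers of $[f,g]$ into a single commutator whose $\nu$-size is controlled by $\nu(f)$ (and not by $\nu(g)$, whose powers would grow), and then to kill the resulting error term using semi-homogeneity.

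First I would record two elementary facts about a semi-homogeneous $\nu$-quasimorphism $\hat\mu$ on $\hat G$, with controlling constant $C$ as in Definition \ref{qm rt nu}. From $\hat\mu(1)=\hat\mu(1^2)=2\hat\mu(1)$ we get $\hat\mu(1)=0$. More importantly, $\hat\mu$ is conjugation-invariant: for $h,x\in\hat G$ and $n\geq 0$, splitting $hx^nh^{-1}=h\cdot(x^nh^{-1})$ and $x^nh^{-1}=x^n\cdot h^{-1}$ and using $\nu(h^{-1})=\nu(h)$ gives
\[\bigl|\hat\mu(hx^nh^{-1})-\hat\mu(h)-\hat\mu(x^n)-\hat\mu(h^{-1})\bigr|\leq 2C\nu(h);\]
since $\hat\mu(hx^nh^{-1})=\hat\mu((hxh^{-1})^n)=n\hat\mu(hxh^{-1})$ and $\hat\mu(x^n)=n\hat\mu(x)$, dividing by $n$ and letting $n\to\infty$ yields $\hat\mu(hxh^{-1})=\hat\mu(x)$. (One cannot simply quote the homogeneous-quasimorphism statement here, but the same limiting argument works because the error $2C\nu(h)$ does not scale with $n$.)

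Next comes the key algebraic observation: the hypothesis $(fgf^{-1})g^{-1}=g^{-1}(fgf^{-1})$ says exactly that $fgf^{-1}$ and $g$ commute, so for every $n\geq 0$
\[[f,g]^n=\bigl((fgf^{-1})g^{-1}\bigr)^n=(fgf^{-1})^n g^{-n}=fg^nf^{-1}g^{-n}=[f,g^n].\]
Writing $[f,g^n]=f\cdot(g^nf^{-1}g^{-n})$ and using conjugation-invariance of $\nu$ (so $\nu(g^nf^{-1}g^{-n})=\nu(f)$) and of $\hat\mu$ (so $\hat\mu(g^nf^{-1}g^{-n})=\hat\mu(f^{-1})$), the defining inequality of a $\nu$-quasimorphism gives
\[\bigl|\hat\mu([f,g^n])-\hat\mu(f)-\hat\mu(f^{-1})\bigr|\leq C\,\nu(f).\]
On the other hand, semi-homogeneity gives $\hat\mu([f,g^n])=\hat\mu([f,g]^n)=n\,\hat\mu([f,g])$. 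Combining these, $\bigl|n\,\hat\mu([f,g])-\hat\mu(f)-\hat\mu(f^{-1})\bigr|\leq C\,\nu(f)$ for all $n\geq 0$; since the right-hand side is independent of $n$, dividing by $n$ and letting $n\to\infty$ forces $\hat\mu([f,g])=0$.

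The only genuinely subtle step is the conjugation-invariance of $\hat\mu$ in the $\nu$-quasimorphism setting; everything else is a short computation. The identity $[f,g]^n=[f,g^n]$ is precisely what the commutation hypothesis provides, and the decisive point is to bracket off $f$ rather than a power of $g$, so that the $\nu$-error stays bounded uniformly in $n$.
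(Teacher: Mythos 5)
Your proof is correct and uses the same core strategy as the paper: collapse $[f,g]^n$ into a single commutator via the commutation hypothesis, bound $\hat\mu$ of that commutator with an $n$-independent error using conjugation-invariance of $\nu$ and of $\hat\mu$, then let $n\to\infty$ by semi-homogeneity. One small divergence is worth recording. You work directly from the stated hypothesis $(fgf^{-1})g^{-1}=g^{-1}(fgf^{-1})$, so your key identity is $[f,g]^n=[f,g^n]$, and you split $[f,g^n]=f\cdot(g^nf^{-1}g^{-n})$, paying an error $C\nu(f)$. The paper's own proof instead invokes $f(gf^{-1}g^{-1})=(gf^{-1}g^{-1})f$ (the form used in Lemma \ref{general nonextendability of qm}), derives $[f,g]^n=[f^n,g]$, and splits as $(f^ngf^{-n})\cdot g^{-1}$ with error $C\nu(g)$. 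These two commutation conditions are not literally the same (the first says $[f,g]$ commutes with $g$, the second that $[f,g]$ commutes with $f$), and the proof as printed matches the second form rather than the lemma's stated hypothesis; your reading is in fact the more faithful one, and the two variants are interchangeable by swapping the roles of $f$ and $g$. You also reprove the conjugation-invariance of $\hat\mu$ inline rather than quoting Lemma \ref{conjinv of qm}; the argument is the same as the paper's.
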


To prove Lemma \ref{extendbility implies vanishing}, we use the following lemma essentially proved in \cite[Theorem 1.3]{MVZ} and \cite[Lemma 3.17]{KO19}.

\begin{lem}\label{conjinv of qm}
Let $\nu$ be a conjugation-invariant norm on a group $\hat{G}$, $\hat{\mu}$ a semi-homogeneous $\nu$-quasimorphism on a group $\hat{G}$.
Then,
$\hat\mu(gfg^{-1})=\hat\mu(f)$.
\end{lem}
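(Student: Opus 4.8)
The plan is to prove Lemma \ref{conjinv of qm} directly from the two defining properties of a semi-homogeneous $\nu$-quasimorphism: semi-homogeneity on positive powers, and the norm-controlled approximate additivity. The key identity is that for every $n \geq 1$ we have $(gfg^{-1})^n = g f^n g^{-1}$, so the element $g f^n g^{-1}$ differs from $f^n$ only by conjugation by the \emph{fixed} element $g$, and in particular $\nu(g f^n g^{-1} \cdot f^{-n})$ can be controlled by a constant independent of $n$. This is where conjugation-invariance of $\nu$ is essential.

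Concretely, first I would record the chain
\[
|\hat\mu(gf^ng^{-1}) - \hat\mu(f^n)| \leq |\hat\mu(gf^n) - \hat\mu(g) - \hat\mu(f^n)| + |\hat\mu(gf^ng^{-1}) - \hat\mu(gf^n) - \hat\mu(g^{-1})| + |\hat\mu(g)+\hat\mu(g^{-1})|,
\]
and bound the first two terms using the $\nu$-quasimorphism inequality with the $\min$ realized at $\nu(g)$ and $\nu(g^{-1})$ respectively (both finite and independent of $n$), while the last term is a fixed constant (indeed one checks $\hat\mu(g) + \hat\mu(g^{-1})$ is bounded, e.g. applying the inequality to $g \cdot g^{-1}$ and using $\hat\mu(1)=0$). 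Hence there is a constant $K$, independent of $n$, with $|\hat\mu(gf^ng^{-1}) - \hat\mu(f^n)| \leq K$ for all $n \geq 1$. Now apply semi-homogeneity: $\hat\mu(gf^ng^{-1}) = \hat\mu((gfg^{-1})^n) = n\,\hat\mu(gfg^{-1})$ and $\hat\mu(f^n) = n\,\hat\mu(f)$, so $n\,|\hat\mu(gfg^{-1}) - \hat\mu(f)| \leq K$ for every $n$, forcing $\hat\mu(gfg^{-1}) = \hat\mu(f)$.

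The main obstacle — really the only subtle point — is making sure every appeal to the $\nu$-quasimorphism inequality has its error term bounded \emph{uniformly in $n$}; this hinges on always pairing the large element $f^n$ with a bounded-norm element ($g$ or $g^{-1}$) so that $\min\{\nu(\cdot),\nu(\cdot)\}$ is controlled by $\nu(g)$ or $\nu(g^{-1})$, and on using conjugation-invariance of $\nu$ implicitly to know these are legitimate finite quantities. One should also be slightly careful that $\hat\mu$ is only assumed semi-homogeneous (homogeneity on non-negative powers), but since all powers appearing are positive this causes no trouble. I would then note that this lemma, together with Lemma \ref{extendbility implies vanishing} and Lemma \ref{general nonextendability of qm}, feeds into the non-extendability results; but for the statement at hand the argument above is self-contained and short.
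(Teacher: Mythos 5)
Your proof is correct and is essentially the same argument as the paper's: both pair the large element $f^n$ (or $f^k$) with the fixed elements $g$ and $g^{-1}$ in the $\nu$-quasimorphism inequality to get an $n$-independent bound on $|\hat\mu(g f^n g^{-1}) - \hat\mu(f^n)|$, then invoke semi-homogeneity and let $n\to\infty$. The paper phrases this as two one-sided inequalities sandwiching $\hat\mu(g^{-1}f^kg)$ while you phrase it as a telescoping three-term decomposition, but the content and the key observation (pair the unbounded factor with a bounded one so the $\min$ in the defect is controlled) are identical.
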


\begin{proof}
By the definitions of partial quasimorphism and conjugation-invariant norm, for any positive integer $k$,
\begin{align*}
      \hat\mu(f^k)&\leq\hat\mu(g)+\hat\mu(g^{-1}f^kg)+\hat\mu(g^{-1})+C\cdot\nu(g)+C\cdot\nu(g^{-1}), \\
      \hat\mu(g^{-1}f^kg)&\leq\hat\mu(g^{-1})+\hat\mu(f^k)+\mu(g)+C\cdot\nu(g^{-1})+C\cdot\nu(g).
\end{align*}
Thus,
\begin{align*}
      &\hat\mu(f^k)-\hat\mu(g)-\hat\mu(g^{-1})-C\cdot\nu(g)-C\cdot\nu(g^{-1}) \\
      &\leq\hat\mu(g^{-1}f^kg)\leq\mu(f^k)-\hat\mu(g)-\hat\mu(g^{-1})+C\cdot\nu(g)+C\cdot\nu(g^{-1})
\end{align*}
Since $\hat\mu$ is semi-homogeneous, $\hat\mu(f^k)=k\hat\mu(f)$ and $\hat\mu(g^{-1}f^kg)=\hat\mu((g^{-1}fg)^k)=k\hat\mu(g^{-1}fg)$ for any positive integer $k$.
Therefore, by dividing the above inequality by $k$ and passing to the limit as $k\to\infty$, we obtain $\hat\mu(gfg^{-1})=\hat\mu(f)$.
\end{proof}

\begin{proof}[Proof of Lemma \ref{extendbility implies vanishing}]
By $f(gf^{-1}g^{-1})=(gf^{-1}g^{-1})f$, for any integer $n$.,
\begin{equation}\label{commu lem}
[f,g]^n=(f(gf^{-1}g^{-1}))^n=f^n(gf^{-1}g^{-1})^n=f^n(gf^{-n}g^{-1})=[f^n,g].
\end{equation}
Thus, since $\hat{\mu}$ is semi-homogeneous, for any positive integer $n$,
\[n\hat\mu([f,g])=
\hat{\mu}([f,g]^n)
      =\hat{\mu}([f^n,g])
      =\hat{\mu}(f^ngf^{-n}g).
\]

Thus, by Lemma \ref{conjinv of qm}, for any integer $n$,
\begin{align*}
      &-C\cdot\nu(g) \\
      &=\hat{\mu}(g)-\hat{\mu}(g)-C\cdot\nu(g) \\
      & = \hat{\mu}(f^ngf^{-n})-\hat{\mu}(g)-C\cdot\nu(g) \\
      &\leq\hat{\mu}(f^ngf^{-n}g^{-1}) \\
      & \leq \hat{\mu}(f^ngf^{-n})+\hat{\mu}(g^{-1})+C\cdot\nu(g) \\
      &=\hat{\mu}(g)+\hat{\mu}(g^{-1})+C\cdot\nu(g).
\end{align*}

Set
\[R=\max\{|\hat{\mu}(g)+\hat{\mu}(g^{-1})+C\cdot\nu(g)|,|C\cdot\nu(g)|\}.\]
Then, by
$n\hat\mu([f,g]) =\hat{\mu}(f^ngf^{-n}g^{-1})$,
       $|\hat{\mu}([f,g])|=|\mu(f^ngf^{-n}g^{-1})|/n<R/n$ for any positive integer $n$.
Hence, $\hat{\mu}([f,g])=0$.
\end{proof}

\section{Applications to symplectic geometry}\label{symplectic section}
First, we prepare notions in symplectic geometry and the flux homomorphism.
For a more precise description, refer to \cite{Ban97,MS,P01} for example.

Let $(M,\omega)$ be a symplectic manifold.
Let $\Symp(M,\omega)$ denote the group of symplectomorphism with compact support and $\Symp_0(M,\omega)$ denote the identity component of $\Symp(M,\omega)$.
Here, we consider the $C^\infty$-topology on $\Symp(M,\omega)$.

For a Hamiltonian function $H\colon M\to\mathbb{R}$ with compact support, we define the \textit{Hamiltonian vector field} $X_H$ associated with $H$ by
\[\omega(X_H,V)=-dH(V)\text{ for any }V \in \mathcal{X}(M),\]
where $\mathcal{X}(M)$ is the set of smooth vector fields on $M$.

Let $S^1$ denote $\mathbb{R}/\mathbb{Z}$.
For a (time-dependent) Hamiltonian function $H\colon S^1\times M\to\mathbb{R}$ with compact support and for $t \in S^1$, we define a function $H_t\colon M\to\mathbb{R}$ by $H_t(x)=H(t,x)$.
Let $X_H^t$ denote the Hamiltonian vector field associated with $H_t$ by  and let $\{\varphi_H^t\}_{t\in\mathbb{R}}$ denote the isotopy generated by $X_H^t$ such that $\varphi^0=\mathrm{id}$.
Let $\varphi_H$ denote $\varphi_H^1$ and $\varphi_H$ is called \textit{the Hamiltonian diffeomorphism generated by } $H$.
For a symplectic manifold $(M,\omega)$, we define the group of Hamiltonian diffeomorphisms by
\[\Ham(M,\omega)=\{\varphi\in\mathrm{Diff}(M)\;|\;\exists H\in C^\infty(S^1\times M)\text{ such that }\varphi=\varphi_H\}.\]

We note that $\Ham(M,\omega)$ is a normal subgroup of $\Symp_0(M,\omega)$.

Let $X$ be a subset of a symplectic manifold $(M,\omega)$.
$X$ is \textit{displaceable} if there exists a Hamiltonian function $H\colon S^1\times M\to\mathbb{R}$ such that $\varphi_H(X)\cap\bar{X}=\emptyset$, where $\bar{X}$ is the topological closure of $X$.

For an exact symplectic manifold $(M,\omega)$, we recall that the \textit{Calabi homomorphism}
is a function $\mathrm{Cal}_M\colon\Ham(M,\omega)\to\mathbb{R}$ defined by
\[
	\mathrm{Cal}_M(\varphi_F)=\int_0^1\int_M F_t\omega^n\,dt.
\]
The Calabi homomorphism is known to be well-defined and a group homomorphism (see \cite{Cala,Ban,Ban97,MS}).

\begin{definition}\label{definition of Calabi property}
Let $\mu\colon\Ham(M,\omega)\to\mathbb{R}$ be a homogeneous quasimorphism.
An open subset $U$ of $M$ has the Calabi property with respect to $\mu$ if $\omega|_U$ is exact and the restriction of $\mu$ to $\Ham(U,\omega)$ coincides with the Calabi homomorphism $\mathrm{Cal}_U$.
\end{definition}
In terms of subadditive invariants, the Calabi property corresponds to the asymptotically vanishing spectrum condition in \cite[Definition 3.5]{KO19}

\begin{definition}[\cite{EP03,PR}]\label{definition of Calabi qm}
A \textit{Calabi quasimorphism} is a homogeneous quasimorphism $\mu\colon\Ham(M,\omega)\to\mathbb{R}$ such that any displaceable open subset of $M$ has the Calabi property with respect to $U$.
\end{definition}



Here, we introduce the notion of the (volume) flux homomorphism.
Our explanation is rough.
For a more precise description, refer to \cite[Section 3]{Ban97} for example.

Let $M$ be an $n$-dimensional manifold and $\Omega$ a volume form on $M$.
Let $\mathrm{Diff}(M,\Omega)$ denote the group of diffeomorphisms preserving $\Omega$ with compact support,
$\mathrm{Diff}_0(M,\Omega)$ denote the identity component of $\mathrm{Diff}(M,\Omega)$ and
$\widetilde{\mathrm{Diff}}(M,\Omega)$ denote the universal covering of $\mathrm{Diff}(M,\Omega)$.
We define the (volume) flux homomorphism $\flux_\Omega\colon\widetilde{\mathrm{Diff}}(M,\Omega)\to H_c^{n-1}(M;\RR)$ by
\[\flux_\Omega([\{\psi^t\}_{t\in[0,1]}])=\int_0^1[\iota_{X_t}\Omega]dt,\]
where $\{\psi^t\}_{t\in[0,1]}$ is a path in $\mathrm{Diff}_0(M,\Omega)$ with $\psi^0=1$ and $[\{\psi^t\}_{t\in[0,1]}]$ is the element of the universal covering $\widetilde{\mathrm{Diff}}(M,\Omega)$ represented by the path $\{\psi^t\}_{t\in[0,1]}$.
It is known that $\flux_\Omega$ is a well-defined homomorphism.

We also define the (descended) flux homomorphism.
We set $\Gamma_\Omega=\flux_\Omega(\pi_1(\mathrm{Diff}(M,\Omega))$ which is called the \textit{volume flux group}.
Then, we naturally obtain the homomorphism $\flux_\Omega\colon\mathrm{Diff}(M,\Omega)\to H_c^{n-1}(M;\RR)/\Gamma_\Omega$


If $(M,\omega)$ is a symplectic manifold, then we can define the symplectic flux group $\Gamma_\omega$ and the symplectic flux homomorphism $\flux_\omega\colon\Symp_0(M,\omega)\to H_c^{1}(M;\RR)/\Gamma_\omega$ similarly and it is known that $\mathrm{Ker}(\flux_\omega)=\Ham(M,\omega)$ \cite{Ban,Ban97}.


Let $\Sigma$ be a closed orientable surface of positive genus and $\omega$ a symplectic form on $\Sigma$.
In order to prove Theorems \ref{two scl different on Ham}, \ref{Py nonex} and \ref{EP nonex}, we prepare the following elements of $\Symp_0(\Sigma,\omega)$.

Since the genus of $\Sigma$ is positive, we can take a non-separating simple closed curve $C$ in $\Sigma$.
Then, there are a positive number $r$ and a symplectic embedding $\iota\colon(-1,1)\times \RR/r\ZZ\to\Sigma$ such that $\iota(\{0\}\times \RR/r\ZZ)=C$.
Here, the symplectic form on $(-1,1)\times \RR/r\ZZ$ is defined by $dx\wedge dy$, where $(x,y)$ is the coordinate on $(-1,1)\times \RR/r\ZZ$.

Let $\epsilon\in(0,1)$ and $\chi\colon (-1,1)\to[0,1]$ be a function satisfying the following conditions.
\begin{itemize}
\item
$\chi(x)=0$ for any $x\in(-1,-1+\epsilon)\cup(1-\epsilon,1)$,
\item
$\chi(x)+\chi(1+x)=1$ for any $x\in(-1,0)$.
\end{itemize}
By the above conditions, we see that $\chi(x)=1$ for any $x\in(-\epsilon,\epsilon)$.
Define a function $F\colon \Sigma\to\RR$ by
\begin{equation*}
F(z)=
\begin{cases}
\chi(x) & (\text{if }z=\iota(x,y)\text{ for some }(x,y)\in(-1,1)\times \RR/r\ZZ), \\
0 & (\text{if }z\notin\mathrm{Im}(\iota)).
\end{cases}
\end{equation*}
Since $C$ is non-separating, $\Sigma\setminus\mathrm{Im}(\iota)$ is path-connected.
Thus, there exists $g_0\in\Symp_0(\Sigma,\omega)$ such that $g_0(\iota(x,y))=\iota(x+1,y)$ for any $(x,y)\in(-1,0)\times \RR/r\ZZ$.

Define a map $f_0\colon\Sigma\to\Sigma$ by
\begin{equation*}
f_0(z)=
\begin{cases}
\varphi_F(z) & (\text{if }z\in\iota((-1,0)\times \RR/r\ZZ)), \\
z & (\text{otherwise}).
\end{cases}
\end{equation*}
Since $f_0(z)=z$ for any $z\in\iota((-1,-1+\epsilon)\cup(-\epsilon,\epsilon))\times \RR/r\ZZ)$, $f_0$ is well-defined and $f_0\in\Symp_0(\Sigma,\omega)$.
Since $\chi(x)+\chi(1+x)=1$ for any $x\in(-1,0)$, by the definition of $g_0$,
\begin{equation*}
g_0f_0^{-1}g_0^{-1}(z)=
\begin{cases}
\varphi_F(z) & (\text{if }z\in\iota((0,1)\times \RR/r\ZZ)), \\
z & (\text{otherwise}).
\end{cases}
\end{equation*}
Thus, we obtain $\varphi_F=f_0g_0f_0^{-1}g_0^{-1}$.
Since $\Supp(f_0)\subset\iota((-1,0)\times \RR/r\ZZ)$ and $\Supp(g_0f_0^{-1}g_0^{-1})\subset\iota((0,1)\times \RR/r\ZZ)$,
$f_0(g_0f_0^{-1}g_0^{-1})=(g_0f_0^{-1}g_0^{-1})f_0$.



To prove Proposition \ref{Py calabi annulus} and Theorem \ref{Py nonex}, we use the following properties of Py's Calabi quasimorphism.

\begin{prop}[\cite{Py06}]\label{Py calabi annulus}
Let $\Sigma$ be a closed orientable surface whose genus is larger than one, $\omega$ a symplectic form on $\Sigma$
and $U$ an open subset of $\Sigma$ which is homeomorphic to an annulus.
Then $U$ has the Calabi property with respect to Py's Calabi quasimorphism $\mu_P$.
\end{prop}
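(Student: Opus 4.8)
The plan is to go back to Py's definition of $\mu_P$ in \cite{Py06} and evaluate it directly on Hamiltonians supported in $U$. Two things must be checked: that $\omega|_U$ is exact, and that $\mu_P$ restricted to $\Ham(U,\omega)$ coincides with $\mathrm{Cal}_U$ (Definition \ref{definition of Calabi property}). The first is immediate: $U$ is homotopy equivalent to a circle, so $H^2(U;\RR)=0$ and $\omega|_U=d\lambda$ for some primitive $\lambda$ on $U$; this is also exactly what makes $\mathrm{Cal}_U$ well defined on Hamiltonians compactly supported in $U$. For the second, recall that $\mu_P$ is the homogenization of a quasi-cocycle on $\Ham(\Sigma,\omega)$ attached to a fixed hyperbolic metric on $\Sigma$ — this is where the hypothesis $\mathrm{genus}>1$ enters — obtained by lifting a Hamiltonian isotopy to the universal cover $\mathbb{H}^2$ and integrating over $\Sigma$ a bounded geometric quantity that records how the lifted isotopy moves points relative to the hyperbolic structure. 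The structural feature I would exploit is that this cocycle has bounded defect and that, on an isotopy whose lift is supported componentwise in $\mathbb{H}^2$ either in a region of bounded hyperbolic diameter or in a strip acted on cocompactly by a cyclic subgroup, its value agrees with the symplectic action integral up to an error bounded independently of the number of iterates; that action integral is precisely the Calabi invariant.

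Next I would fix $\varphi=\varphi_F\in\Ham(U,\omega)$ with $F$ compactly supported in $U$ and split into two cases according to the image of $\pi_1(U)\to\pi_1(\Sigma)$ induced by the inclusion. If the core circle of $U$ is null-homotopic in $\Sigma$, then the cover $p^{-1}(U)\to U$ (where $p\colon\mathbb{H}^2\to\Sigma$) is trivial, so $p^{-1}(U)$ is a disjoint union of $\pi_1(\Sigma)$-translates of a single relatively compact annular region, permuted freely. The lifted orbit of any point stays inside one such bounded region, while the cocycle vanishes on points outside $p^{-1}(\mathrm{Supp}(F))$; integrating over a fundamental domain and applying Py's locality estimate then gives $\mu_P(\varphi)=\mathrm{Cal}_U(\varphi)$. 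This is the same mechanism by which $\mu_P$ has the Calabi property on displaceable subsets, specialized to a topologically trivial support.

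In the remaining, essential case the core of $U$ represents a nontrivial class $\gamma\in\pi_1(\Sigma)$; then $p^{-1}(U)$ is a disjoint union of $\pi_1(\Sigma)$-translates of a single strip $S$ — a bounded-width neighborhood of the translation axis of $\gamma$ — on which $\langle\gamma\rangle$ acts cocompactly. I would lift the isotopy $\{\varphi_F^t\}$ to $\mathbb{H}^2$, supported on this sheet inside $S$, and estimate the defining cocycle on $\varphi^n$: it splits into a part recording how the lifted motion winds along the axis of $\gamma$, which is bounded independently of $n$ because $S/\langle\gamma\rangle$ is compact and the cocycle has bounded defect, plus the action term, which equals $n\,\mathrm{Cal}_U(\varphi)$ once the primitive $\lambda$ is fixed and normalizations are matched. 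Dividing by $n$ and letting $n\to\infty$ yields $\mu_P(\varphi)=\mathrm{Cal}_U(\varphi)$, and combining the two cases proves the proposition.

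The main obstacle is precisely this uniform-in-$n$ bound in the non-contractible case: although $\varphi^n$ need not be displaceable and, lifted to $\mathbb{H}^2$, can carry its support arbitrarily far along the axis of $\gamma$, one must show that the non-action contribution to the cocycle on $\varphi^n$ does not grow with $n$. This forces a combination of the cocompactness of the $\langle\gamma\rangle$-action on the strip $S$, the $\pi_1(\Sigma)$-equivariance of the hyperbolic structure, and the bounded-defect property of Py's cocycle, and is essentially the heart of Py's construction restricted to annular supports. A secondary and routine point is verifying that $\mathrm{Cal}_U(\varphi)$ is independent of the generating Hamiltonian and that its normalization matches the action term in the cocycle; both follow once $\omega|_U$ is known to be exact.
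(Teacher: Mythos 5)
The paper does not prove this proposition; it is stated with the citation \cite{Py06} and used as an external input, so there is no internal argument for you to be compared against. Your proposal is therefore being measured against Py's original theorem rather than against anything in this paper.

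As a reconstruction of Py's argument, your outline is structurally reasonable (exactness of $\omega|_U$ from $H^2(U;\RR)=0$; a case split on whether the core of $U$ is essential in $\pi_1(\Sigma)$; lifting to $\mathbb H^2$ and comparing the defining quasi-cocycle with the action integral), and you correctly locate where all the work lives. But as written it is not a proof: the uniform-in-$n$ bound on the ``non-action'' part of the cocycle on $\varphi^n$ in the essential case is the entire content of the result for annuli, and you explicitly leave it as an obstacle rather than discharging it. Saying that it ``forces a combination of'' cocompactness, equivariance, and bounded defect names the ingredients without supplying the estimate, and each of those three facts is already present in the much easier displaceable case where the conclusion is standard; what is missing is the actual mechanism by which the winding contribution along the axis of $\gamma$ stays bounded even though the lifted support of $\varphi^n$ travels arbitrarily far along that axis. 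Two smaller points: in the null-homotopic case you appeal to the ``same mechanism as for displaceable subsets,'' but a null-homotopic annulus can have arbitrarily large area and need not be displaceable, so what you actually need there is again the bounded-geometry-of-the-lift argument, not displaceability; and your description of Py's cocycle (integrating a quantity over $\Sigma$ that records how lifted orbits move relative to the hyperbolic structure) should be treated as a heuristic paraphrase unless you pin it to the precise object Py defines, since the normalization matching $\mathrm{Cal}_U$ depends on those details. For the purposes of this paper, the honest route is the one the authors take: cite Py's theorem rather than re-derive it.
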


\begin{proof}[Proof of Theorem \ref{two scl different on Ham}]
By the definition of $F$, $\int_{\Sigma}F \omega>0$.
By Proposition \ref{Py calabi annulus}, $\mathrm{Im}(\iota)$ has the Calabi property with respect to $\mu_P$.
Since $\varphi_F=f_0g_0f_0^{-1}g_0^{-1}$ and $\mathrm{Supp}(F)\subset\mathrm{Im}(\iota)$,
\[\mu_P([f_0,g_0])=\mu_P(\varphi_F)=\int_{\Sigma}F \omega>0.\]
Thus, by Theorem \ref{thm:bavard},
$\scl_{\hG,\bG}([f_0,g_0])>0$.

On the other hand, by $f_0(g_0f_0^{-1}g_0^{-1})=(g_0f_0^{-1}g_0^{-1})f_0$ and a similar calculation as \eqref{commu lem}, $[f_0,g_0]^n=[f_0^n,g_0]$ for any integer $n$.
Thus,
\[\mathrm{cl}_{\hG}([f_0,g_0]^n)=\mathrm{cl}_{\hG}([f_0^n,g_0])\leq1\]
 for any integer $n$.
Hence, $\scl_{\hG}([f_0,g_0])=0$.
\end{proof}

\begin{proof}[Proof of Corollary \ref{nonexistence of vector field}]
To prove by contradiction, we suppose there exist vector fields $X_1,\ldots,X_{2g}$ satisfying the conditions.

Let $\varphi_i^t$ denote the time-$t$ map of the flow generated by $X_i$.
Set $\alpha_i=[\iota_{X_i}\omega]\in H^1(\Sigma;\RR)$ for $i=1,\ldots,2g$.
Define a map $s\colon H^1(\Sigma;\RR)\to\Symp_0(\Sigma;\RR)$ by
\[s(t_1\alpha_1+t_2\alpha_2+\cdots+t_{2g}\alpha_{2g})=\varphi_1^{t_1}\circ\varphi_2^{t_2}\circ\cdots\circ\varphi_{2g}^{t_{2g}}.\]
Since $\mathcal{L}_{X_i}\omega=0$ for any $i$, $\varphi_1^{t_1}\circ\varphi_2^{t_2}\circ\cdots\circ\varphi_{2g}^{t_{2g}}\in\Symp_0(\Sigma;\RR)$.
Since $\alpha_i$ is a basis, $s$ is well-defined.
Since $[X_i,X_j]=0$ for any $i,j$, $s$ is a homomorphism.
By the definition of the flux homomorphism, $s$ is a section.
It contradicts Corollary \ref{section of flux}.
\end{proof}


\begin{proof}[Proof of Theorem \ref{Py nonex}]
As we showed in the proof of Theorem \ref{two scl different on Ham},
$\mu_P([f_0,g_0])>0$.
Thus, by Lemma \ref{general nonextendability of qm}, $\mu_P$ is non-extendable to $\hG$.
Since any quasimorphism is a $\nu_0$-quasimorphism, we complete the proof of Theorem \ref{Py nonex}.
\end{proof}

To prove Theorem \ref{EP nonex}, we introduce the following property of Entov--Polterovich's partial Calabi quasimorphism $\mu_{EP}$.
This is a corollary of ``heaviness'' of $C$ in the sense of \cite{EP09}.

\begin{prop}[{\cite[Example 1.18]{EP09}}]\label{meridian heavy}
For the above Hamiltonian function $F\colon\Sigma\to\RR$,
\[\mu_{EP}(\varphi_F)=\int_\Sigma F\omega-\int_\Sigma\omega.\]
\end{prop}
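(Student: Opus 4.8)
The plan is to read off the formula from the \emph{heaviness} of the circle $C$ together with the elementary axioms of the partial symplectic quasi-state attached to $\mu_{EP}$. Recall that the Entov--Polterovich construction provides, alongside the partial quasimorphism $\mu_{EP}$ on $\Ham(\Sigma,\omega)$, a partial symplectic quasi-state $\zeta\colon C^\infty(\Sigma)\to\RR$ which is monotone (if $H_1\leq H_2$ then $\zeta(H_1)\leq\zeta(H_2)$), normalized ($\zeta(1)=1$), vanishes on functions supported in displaceable subsets, and is related to $\mu_{EP}$ on autonomous Hamiltonians by
\[
\mu_{EP}(\varphi_H)=\int_\Sigma H\omega-\Bigl(\int_\Sigma\omega\Bigr)\zeta(H).
\]
For autonomous $H$ this relation is immediate from $\varphi_H^n=\varphi_{nH}$ and the homogeneity of $\mu_{EP}$; one checks the normalization against the Calabi property (for $H$ supported in a displaceable set the right-hand side equals $\int_\Sigma H\omega=\mathrm{Cal}(\varphi_H)$) and against constant $H$ (both sides vanish since $\varphi_H=\mathrm{id}$). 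Finally recall that a closed subset $X\subset\Sigma$ is \emph{$\mu_{EP}$-heavy} if $\zeta(H)\geq\inf_X H$ for every $H\in C^\infty(\Sigma)$.

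First I would invoke \cite[Example 1.18]{EP09}: the non-separating simple closed curve $C$ is $\mu_{EP}$-heavy. Then I would apply this to the Hamiltonian $F$ constructed above. By construction $F$ takes values in $[0,1]$, so $F\leq 1$ on all of $\Sigma$, while $F\equiv 1$ on the neighborhood $\iota\bigl((-\epsilon,\epsilon)\times\RR/r\ZZ\bigr)$ of $C$, whence $\inf_C F=1$. Heaviness of $C$ then gives $\zeta(F)\geq\inf_C F=1$, while monotonicity and $\zeta(1)=1$ give $\zeta(F)\leq 1$. Hence $\zeta(F)=1$, and substituting into the displayed relation yields
\[
\mu_{EP}(\varphi_F)=\int_\Sigma F\omega-\Bigl(\int_\Sigma\omega\Bigr)\cdot 1=\int_\Sigma F\omega-\int_\Sigma\omega,
\]
which is the assertion.

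The genuine content here is the heaviness of $C$, which is imported from \cite{EP09}; the rest is short bookkeeping. Accordingly, the point I would treat most carefully is fixing the precise normalization conventions for $\mu_{EP}$ and $\zeta$ — the overall sign, whether one works on $\Ham$ or its universal cover, and whether Hamiltonians are normalized — so that the relation between $\mu_{EP}$ and $\zeta$ reads exactly as displayed above; the Calabi property on displaceable subsets is the cleanest sanity check for pinning these down.
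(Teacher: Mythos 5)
Your proposal is correct and follows exactly the route the paper alludes to: the paper gives no written proof of this proposition, merely citing \cite[Example 1.18]{EP09} and remarking that it is a corollary of the heaviness of $C$, and your derivation (heaviness plus monotonicity and normalization of the associated partial symplectic quasi-state $\zeta$, giving $\zeta(F)=1$, then substitution into $\mu_{EP}(\varphi_F)=\int_\Sigma F\omega-\bigl(\int_\Sigma\omega\bigr)\zeta(F)$) is precisely the bookkeeping the citation is meant to supply. Your attention to the normalization conventions is well placed, and the sanity checks you name (Calabi property on displaceable supports and constant Hamiltonians) are the right ones to pin down the sign and the $\int_\Sigma\omega$ factor.
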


\begin{proof}[Proof of Theorem \ref{EP nonex}]
Set $\hG=\Symp_0(\Sigma,\omega)$ and $\bG=\Ham(\Sigma,\omega)$.
By Proposition \ref{meridian heavy},

\[\mu_{EP}(\varphi_F)=\int_\Sigma F\omega-\int_\Sigma\omega<0.\]
By $[f_0,g_0]=\varphi_F$ and Lemma \ref{general nonextendability of qm}, $\mu_{EP}$ is non-extendable to $\hG$.
\end{proof}

\section{On $C^0$-symplectic topology}\label{C0 section}

As a generalization of the (volume) flux homomorphism, Fathi \cite{Fa} considered the \textit{mass flow homomorphism} for measure-preserving homeomorphisms.
Here, for simplicity, we explain only a special case.

Let $\Sigma$ be a closed orientable surface whose genus is larger than one and $\omega$ a symplectic form on $\Sigma$.
Let $\mathrm{Sympeo}(\Sigma,\omega)$ denote the $C^0$-closure of $\Symp(\Sigma,\omega)$ in the group of homeomorphisms and
$\mathrm{Sympeo}_0(\Sigma,\omega)$ denote the identity component of $\mathrm{Sympeo}(\Sigma,\omega)$.

Then, there is a homomorphism $\theta_\omega\colon\mathrm{Sympeo}_0(\Sigma,\omega)\to H^1(\Sigma;\RR)$ called the \textit{mass flow homomorphism} of $(\Sigma,\omega)$.
We note that
\[\mathrm{Ker}(\theta_\omega)=\overline{\Ham(\Sigma,\omega)}^{C^0},\]
where $\overline{\Ham(\Sigma,\omega)}^{C^0}$ is the $C^0$-closure of the group of $\Ham(\Sigma,\omega)$ in the group of homeomorphisms \cite{CGHS}

\begin{remark}
In Fathi's original paper, the domain of the mass flow homomorphism looks different from the above one.
Oh and M$\ddot{\mathrm{u}}$ller proved that his original domain and our domain correspond \cite{OM}.
\end{remark}

We have the following theorem which is a $C^0$-version of Corollary \ref{section of flux}.

\begin{cor}\label{c0 non-section}
Let $\Sigma$ be a closed orientable surface whose genus is larger than one and $\omega$ a symplectic form on $\Sigma$.
There is no section homomorphism of the mass flow homomorphism $\theta_\omega\colon\mathrm{Sympeo}_0(\Sigma,\omega)\to H^1(\Sigma;\RR)$.
\end{cor}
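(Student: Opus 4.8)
The plan is to run the same argument used for Corollary \ref{section of flux}, but in the $C^0$ category, using the $C^0$-version of the non-extendability statement (Theorem \ref{C0 nonextend}) in place of Theorem \ref{Py nonex}. Concretely, set $\hG=\mathrm{Sympeo}_0(\Sigma,\omega)$ and $\bG=\overline{\Ham(\Sigma,\omega)}^{C^0}$, so that by \cite{CGHS} we have the exact sequence $1\to\bG\to\hG\xrightarrow{\theta_\omega}H^1(\Sigma;\RR)\to 1$, since the image $\theta_\omega(\hG)=H^1(\Sigma;\RR)$ (the mass flow homomorphism is surjective onto the full cohomology). If a section homomorphism $s\colon H^1(\Sigma;\RR)\to\hG$ of $\theta_\omega$ existed, then $H^1(\Sigma;\RR)$ is abelian, hence amenable, so by the $C^0$-analogue of Proposition \ref{prop:section_hom homog} (which applies to partial quasimorphisms as well, since the construction $\phi'(\hg)=\phi(\bg_{\hg})$ and its homogenization only use the section and do not change the controlling norm essentially) every $\hG$-invariant semi-homogeneous partial quasimorphism on $\bG$ would be extendable to $\hG$.

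Next I would exhibit a concrete $\hG$-invariant partial quasimorphism on $\bG$ that fails to extend, namely the $C^0$-extension of Py's Calabi quasimorphism (or Brandenbursky's Calabi quasimorphism) furnished by Theorem \ref{C0 nonextend}. The key input is that the elements $f_0,g_0\in\Symp_0(\Sigma,\omega)\subset\hG$ constructed in Section \ref{symplectic section} satisfy $f_0(g_0f_0^{-1}g_0^{-1})=(g_0f_0^{-1}g_0^{-1})f_0$, that $[f_0,g_0]=\varphi_F\in\bG$, and that the relevant partial quasimorphism evaluates nontrivially on $\varphi_F$ (by the Calabi property applied to the annular neighborhood $\mathrm{Im}(\iota)$, exactly as in the proof of Theorem \ref{two scl different on Ham} or Theorem \ref{EP nonex}). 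Then Lemma \ref{general nonextendability of qm} gives non-extendability of that partial quasimorphism to $\hG$, contradicting the conclusion of the previous paragraph. Hence no section homomorphism of $\theta_\omega$ exists.

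The main obstacle is a bookkeeping one: I must make sure that Theorem \ref{C0 nonextend} actually produces an $\hG$-invariant semi-homogeneous $\nu$-quasimorphism on $\bG$ (for some conjugation-invariant norm $\nu$ on $\bG$, e.g.\ a fragmentation norm $\nu_{\overline{\Ham(U)}^{C^0}}$ for displaceable $U$), and that $f_0,g_0$ lie in $\hG=\mathrm{Sympeo}_0(\Sigma,\omega)$ — but these are smooth symplectomorphisms in the identity component, hence lie in $\mathrm{Sympeo}_0$, so this is immediate. One also needs the $C^0$-version of Proposition \ref{prop:section_hom homog}; since partial quasimorphisms are extendable in the sense of Definition \ref{def of extendability} precisely when such a construction works, and the section argument there only conjugates by the fixed elements $q_{\hg}$ whose controlling-norm contribution is absorbed into a new conjugation-invariant norm on $\hG$, this goes through verbatim. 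Once these routine checks are in place, the contradiction with Lemma \ref{general nonextendability of qm} closes the argument.
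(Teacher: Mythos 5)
Your proposal has the right skeleton (reduce to Theorem \ref{C0 nonextend} plus Proposition \ref{prop:section_hom homog} via $\mathrm{Ker}(\theta_\omega)=\overline{\Ham(\Sigma,\omega)}^{C^0}$), but it contains two substantive errors. First, you invoke a ``$C^0$-analogue of Proposition \ref{prop:section_hom homog} for partial quasimorphisms,'' asserting that the construction $\phi'(\hg)=\phi(\bg_{\hg})$ goes through for $\nu$-quasimorphisms. The paper's own Theorem \ref{EP nonex} together with Example \ref{torus section} is precisely a counterexample to this claim: for $T^2$, the flux homomorphism admits a section, yet Entov--Polterovich's partial Calabi quasimorphism $\mu_{EP}$ does not extend to $\Symp_0(T^2,\omega)$. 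The paper even remarks after Theorem \ref{EP nonex} that the analogue of Proposition \ref{prop:section_hom homog} fails for partial quasimorphisms. Your heuristic (``the controlling-norm contribution is absorbed into a new conjugation-invariant norm'') does not yield the required $\nu$-control on the extension, so this step would not survive scrutiny.

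Second, you describe the object produced by Theorem \ref{C0 nonextend} as ``the $C^0$-extension of Py's Calabi quasimorphism (or Brandenbursky's Calabi quasimorphism).'' That is not what the theorem provides: it extends the \emph{difference} $\mu_P-\mu_B$, not $\mu_P$ or $\mu_B$ alone. Neither of the latter satisfies condition (1) of the Entov--Polterovich--Py criterion (Theorem \ref{mfh property}) because each is a Calabi quasimorphism and thus restricts to the nonzero Calabi homomorphism on small discs; only the difference kills this term. Moreover, $\mu_{PB}$ is a genuine homogeneous quasimorphism, not a partial one, so the whole detour through partial quasimorphisms is both unnecessary and incorrect. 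The intended argument simply applies Proposition \ref{prop:section_hom homog} as stated to the homogeneous quasimorphism $\mu_{PB}$ on $\bG=\overline{\Ham(\Sigma,\omega)}^{C^0}$ and contradicts Theorem \ref{C0 nonextend}; no partial-quasimorphism variant is needed or available.
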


To prove Corollary \ref{c0 non-section}, we use both of Py's Calabi quasimorphism and Brandenbursky's Calabi quasimorphism.

In order to prove Corollary \ref{c0 non-section}, we construct a non-extendable quasimorphism and apply Proposition \ref{prop:section_hom homog}.
To construct a non-extendable quasimorphism, we use the following theorem by Entov, Polterovich, and Py.

\begin{thm}[\cite{EPP}]\label{mfh property}
Let $\Sigma$ be a closed orientable surface and $\omega$ a symplectic form on $\Sigma$.
Let $\mu$ be a homogeneous quasimorphism on $\Ham(\Sigma,\omega)$.
Suppose the following conditions.
\begin{enumerate}
  \item There exists a positive number $A$ such that for any disc $D\subset \Sigma$ of area less than $A$, the restriction of $\mu$ to $\Ham(D,\omega)$ vanishes.
  \item The restriction of $\mu$ to each one-parameter subgroup of $\Ham(\Sigma,\omega)$ is linear.
\end{enumerate}
Then, there exists a homogeneous quasimorphism $\bar{\mu}$ on $\overline{\Ham(\Sigma,\omega)}^{C^0}$ such that $\bar{\mu}|_{\Ham(\Sigma,\omega)}=\mu$.
\end{thm}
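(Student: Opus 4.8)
The plan is to extend $\mu$ to its $C^0$-closure $G:=\overline{\Ham(\Sigma,\omega)}^{C^0}$ (which contains $\Ham(\Sigma,\omega)$ as a dense subgroup) in two stages: first build an inhomogeneous quasimorphism on $G$ whose restriction to $\Ham(\Sigma,\omega)$ agrees with $\mu$ up to a bounded error, then homogenize. Everything hinges on one analytic statement, which I will call the \emph{Key Lemma}: there are constants $\delta>0$ and $K>0$, depending only on $(\Sigma,\omega,\mu)$, such that $|\mu(\phi)|\le K$ whenever $\phi\in\Ham(\Sigma,\omega)$ and $d_{C^0}(\phi,\mathrm{id})<\delta$. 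Hypotheses (1) and (2) enter precisely here, and establishing the Key Lemma is the main obstacle.

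To prove the Key Lemma I would use a quantitative $C^0$-fragmentation of a $C^0$-small $\phi$. Fix once and for all a finite cover of $\Sigma$ by open discs of area less than $A$, with positive Lebesgue number $\rho$, and take $\delta<\rho$. The goal is to write any $\phi$ with $d_{C^0}(\phi,\mathrm{id})<\delta$ as a product of a number of factors bounded independently of $\phi$, each of which is either (i) a Hamiltonian diffeomorphism supported in a finite disjoint union of discs of area less than $A$, or (ii) a short-time piece of a one-parameter subgroup drawn from a fixed finite family. On a factor of type (i) we have $\mu=0$: its components, supported in the pairwise disjoint discs, commute; a homogeneous quasimorphism is additive on commuting elements; and $\mu$ vanishes on each $\Ham(D,\omega)$ with $\mathrm{area}(D)<A$ by hypothesis (1). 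On a factor of type (ii), $|\mu|$ is bounded by a constant depending only on the fixed family, by the linearity hypothesis (2). The quasimorphism inequality for $\mu$ then bounds $|\mu(\phi)|$. The genuinely nontrivial point --- and this is where one reproduces the analysis of \cite{EPP} --- is the $C^0$-fragmentation itself: plain fragmentation into small discs is useless, because a $C^0$-small Hamiltonian diffeomorphism can have full support and require arbitrarily many disc-supported factors, so one must use (2) to absorb its ``spread-out but uniformly small'' part.

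Granting the Key Lemma, the rest is formal, and I would proceed as follows. Define $q\colon G\to\RR$ by
\[q(\bar\phi)=\liminf\{\mu(\phi_n)\;:\;\phi_n\in\Ham(\Sigma,\omega),\ \phi_n\xrightarrow{C^0}\bar\phi\}.\]
Choosing $\phi_0\in\Ham(\Sigma,\omega)$ with $d_{C^0}(\phi_0,\bar\phi)<\delta/2$, any sequence $\phi_n\to\bar\phi$ satisfies $d_{C^0}(\phi_n\phi_0^{-1},\mathrm{id})<\delta$ for $n$ large, so the Key Lemma keeps $\mu(\phi_n)$ within $K+D(\mu)$ of $\mu(\phi_0)$; hence $q$ is real-valued. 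A direct computation with the quasimorphism inequality for $\mu$ and the properties of $\liminf$ gives $|q(\bar\phi\bar\psi)-q(\bar\phi)-q(\bar\psi)|\le D(\mu)$, so $q$ is a quasimorphism on $G$ with $D(q)\le D(\mu)$; the same estimate shows $|q(\phi)-\mu(\phi)|\le K+D(\mu)$ for every $\phi\in\Ham(\Sigma,\omega)$. Let $\bar\mu$ be the homogenization of $q$; it is a homogeneous quasimorphism on $G$ with $D(\bar\mu)\le 2D(q)$, and for $\phi\in\Ham(\Sigma,\omega)$ one has
\[\bar\mu(\phi)=\lim_{k\to\infty}\frac{q(\phi^k)}{k}=\lim_{k\to\infty}\frac{\mu(\phi^k)}{k}=\mu(\phi),\]
since $|q(\phi^k)-k\mu(\phi)|\le K+D(\mu)$. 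Thus $\bar\mu|_{\Ham(\Sigma,\omega)}=\mu$, as desired. In summary, the only real work is the Key Lemma; the passage to $G$ and the homogenization are routine.
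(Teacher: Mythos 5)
The paper offers no proof of this statement: it is quoted directly as a theorem of Entov--Polterovich--Py \cite{EPP}, so there is no in-paper argument to compare against. Your proposal correctly isolates the logical skeleton of such a proof. The passage from your Key Lemma (boundedness of $\mu$ on a $C^0$-neighborhood of the identity inside $\Ham$) to the extension is sound: the $\liminf$ construction does yield a real-valued function $q$ with $D(q)\le D(\mu)$ (for the lower bound $q(\bar\phi)+q(\bar\psi)\le q(\bar\phi\bar\psi)+D(\mu)$ one picks a sequence $\chi_n\to\bar\phi\bar\psi$ realizing $q(\bar\phi\bar\psi)$, a sequence $\psi_n\to\bar\psi$ realizing $q(\bar\psi)$, and uses $\chi_n\psi_n^{-1}\to\bar\phi$, passing to a common subsequence on which both $\mu(\chi_n)$ and $\mu(\psi_n)$ converge); the estimate $|q(\phi)-\mu(\phi)|\le K+D(\mu)$ on $\Ham$ is correct; and homogenization then recovers $\mu$ on $\Ham$ by homogeneity. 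So the formal part holds.

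The substantive gap is exactly where you flag it: the Key Lemma is the entire mathematical content of \cite{EPP}, and your sketch of its proof is a heuristic, not an argument. The issue is quantitative: you need to factor an \emph{arbitrary} $C^0$-small $\phi\in\Ham(\Sigma,\omega)$ (which may have full support and arbitrarily large $C^1$-norm and Hofer energy) into a \emph{uniformly bounded} number of pieces each controlled by hypothesis (1) or (2). It is not enough to say ``one must use (2) to absorb the spread-out part''; one has to exhibit the one-parameter subgroups, show the leftover is genuinely supported in small discs, and control the number of factors independently of $\phi$. That construction is the theorem. Hypothesis (2) is also used in a slightly subtler way than ``$|\mu|$ is bounded on a fixed finite family of short flows'': since the flows needed may depend on $\phi$, one uses linearity on each one-parameter subgroup to make the contribution proportional to time and hence small, not merely bounded. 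As a blind reconstruction your outline is a reasonable reverse-engineering of the statement's hypotheses, and you are commendably explicit that the hard step is deferred to \cite{EPP}; but as a proof it reduces the theorem to a lemma of essentially the same depth. Since the paper itself treats this theorem as an external input, that is an acceptable outcome here.
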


As we explained in Subsection \ref{extension subsec},
Brandenbursky proved that there are infinitely many Calabi quasimorphisms in $\mathrm{Im}(\Gamma_2)$.
We take one of them and set $\mu_B\in Q(\Ham(\Sigma,\omega))^{\Symp_0(\Sigma,\omega)}$.
Then,
\begin{prop}
The quasimorphism $\mu_P-\mu_B$ satisfies the conditions of Theorem \ref{mfh property}.
Therefore, there exists a homogeneous quasimorphism $\mu_{PB}$ on $\overline{\Ham(\Sigma,\omega)}^{C^0}$ such that $\mu_{PB}|_{\Ham(\Sigma,\omega)}=\mu_P-\mu_B$.
\end{prop}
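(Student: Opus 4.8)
The plan is to verify the two hypotheses of Theorem~\ref{mfh property} for the homogeneous quasimorphism $\mu=\mu_P-\mu_B$ and then apply that theorem verbatim. Both hypotheses are checked by reducing to the corresponding properties of $\mu_P$ and of $\mu_B$ separately, and the reason for working with the difference $\mu_P-\mu_B$ rather than with $\mu_P$ alone is precisely that the two Calabi homomorphisms cancel on small discs.

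For hypothesis~(1), I would first fix a positive number $A$ small enough that every embedded open disc $D\subset\Sigma$ with $\int_D\omega<A$ is displaceable in $(\Sigma,\omega)$; such an $A$ exists by the standard fact that a disc of sufficiently small area in a closed surface can be pushed off itself by a Hamiltonian isotopy. For such a $D$, displaceability together with the fact that $\mu_P$ and $\mu_B$ are Calabi quasimorphisms in the sense of Definition~\ref{definition of Calabi qm} shows that $D$ has the Calabi property with respect to each of them, so $\mu_P|_{\Ham(D,\omega)}=\mathrm{Cal}_D=\mu_B|_{\Ham(D,\omega)}$, whence $\mu|_{\Ham(D,\omega)}=0$. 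This is hypothesis~(1), and it is immediate once both quasimorphisms are recognized as Calabi quasimorphisms.

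For hypothesis~(2), I would use that the restriction of each of $\mu_P$ and $\mu_B$ to a one-parameter subgroup $\{\varphi_H^t\}_{t\in\RR}$ generated by an autonomous Hamiltonian $H$ is linear. For any homogeneous quasimorphism this restriction is at least a group homomorphism $\RR\to\RR$; linearity is the extra information, and it follows from the Hofer- (respectively $L^p$-) Lipschitz continuity of these quasimorphisms, established in \cite{Py06} for $\mu_P$ and inherited from the Gambaudo--Ghys construction used to define $\mu_B$ (see \cite{Bra}), via the estimate that the Hofer length of $\varphi_H^t(\varphi_H^s)^{-1}=\varphi_H^{t-s}$ is at most $|t-s|\,(\max H-\min H)$. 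A difference of two linear functions is linear, so $\mu$ satisfies hypothesis~(2), and Theorem~\ref{mfh property} then produces the desired homogeneous quasimorphism $\mu_{PB}$ on $\overline{\Ham(\Sigma,\omega)}^{C^0}$ with $\mu_{PB}|_{\Ham(\Sigma,\omega)}=\mu_P-\mu_B$. I expect hypothesis~(2) to be the only point requiring genuine input from outside this paper, since linearity on one-parameter subgroups is not automatic for a general homogeneous quasimorphism and one must quote the continuity properties of $\mu_P$ and $\mu_B$ from the literature.
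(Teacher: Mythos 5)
Your proposal is correct and follows essentially the same two-step structure as the paper: for hypothesis (1) cancel the two Calabi homomorphisms on small displaceable discs, and for hypothesis (2) combine homogeneity (giving $\QQ$-linearity on one-parameter subgroups) with continuity to get $\RR$-linearity. The only cosmetic difference is that for the continuity of $t\mapsto\mu_P(\varphi_H^t)$ the paper cites Rosenberg's Theorem 8.6 rather than Py's original paper, and the paper states the plain continuity of $t\mapsto\mu_B(\varphi_H^t)$ from Brandenbursky's Theorem 2.12 instead of invoking an explicit Hofer/$L^p$-Lipschitz bound.
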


\begin{proof}
Note that any disc $D\subset \Sigma$ of area less than $\frac{1}{2}\int_\Sigma\omega$ is displaceable.
Thus, since $\mu_P$ and $\mu_B$ are Calabi quasimorphisms, $\mu_P-\mu_B$ satisfies the first condition of Theorem \ref{mfh property}.

For a Hamiltonian function $H\colon \Sigma\to\RR$, let $f^P_H,f^B_H\colon\RR\to\RR$ be functions defined by
\[f^P_H(t)=\mu_P(\varphi_H^t), f^B_H=\mu_B(\varphi_H^t).\]
Rosenberg \cite[Theorem 8.6]{R} and Brandenbursky \cite[Theorem 2.12]{Bra} implicitly proved that $f^P_H$ and $f^B_H$ are continuous functions, respectively.
Thus, since $\mu_P$ and $\mu_B$ are homogeneous quasimorphisms, $\mu_P-\mu_B$ satisfies the second condition of Theorem \ref{mfh property}.
\end{proof}

\begin{thm}\label{C0 nonextend}
The homogeneous quasimorphism $\mu_{PB}\colon\overline{\Ham(\Sigma,\omega)}^{C^0}\to\RR$ is non-extendable to $\mathrm{Sympeo}_0(\Sigma,\omega)$.
\end{thm}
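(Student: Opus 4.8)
The plan is to argue exactly as in the proofs of Theorems \ref{Py nonex} and \ref{EP nonex}, using the elements $f_0,g_0$ constructed in Section \ref{symplectic section}. These lie in $\Symp_0(\Sigma,\omega)$, hence in $\hG:=\mathrm{Sympeo}_0(\Sigma,\omega)$; they satisfy $f_0(g_0f_0^{-1}g_0^{-1})=(g_0f_0^{-1}g_0^{-1})f_0$; and $[f_0,g_0]=\varphi_F$ lies in $\Ham(\Sigma,\omega)$, hence in $G:=\overline{\Ham(\Sigma,\omega)}^{C^0}$, which is the domain of $\mu_{PB}$ and a normal subgroup of $\hG$ (it is $\mathrm{Ker}(\theta_\omega)$). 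So it suffices to show $\mu_{PB}(\varphi_F)\neq 0$; then Lemma \ref{general nonextendability of qm}, applied with this $\hG$, this $G$, $\mu=\mu_{PB}$, $\nu=\nu_0$, $f=f_0$, $g=g_0$, yields that $\mu_{PB}$ is non-extendable to $\hG$.

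To evaluate $\mu_{PB}(\varphi_F)=\mu_P(\varphi_F)-\mu_B(\varphi_F)$ I would recycle both computations from the proof of Theorem \ref{two scl different on Ham}. On the one hand $\mu_P(\varphi_F)=\int_\Sigma F\omega>0$, by Py's annulus Calabi property (Proposition \ref{Py calabi annulus}) and $\Supp(F)\subset\mathrm{Im}(\iota)$. On the other hand $\mu_B(\varphi_F)=0$: since $\mu_B\in\mathrm{Im}(\Gamma_2)$ and $\Gamma_2=i_Q^\ast\circ\bar\Gamma_2$, the quasimorphism $\mu_B$ is the restriction to $\Ham(\Sigma,\omega)$ of a homogeneous quasimorphism $\bar\mu_B$ on $\Symp_0(\Sigma,\omega)$; and $\varphi_F^n=[f_0^n,g_0]$ for every $n$ by \eqref{commu lem}, so $|\bar\mu_B(\varphi_F)|=\frac1n|\bar\mu_B([f_0^n,g_0])|\leq\frac1n D(\bar\mu_B)$ for all $n\geq 1$, whence $\bar\mu_B(\varphi_F)=0$ and $\mu_B(\varphi_F)=\bar\mu_B(\varphi_F)=0$. (Equivalently: $\scl_{\Symp_0(\Sigma,\omega)}(\varphi_F)=0$ by the proof of Theorem \ref{two scl different on Ham}, so every homogeneous quasimorphism on $\Symp_0(\Sigma,\omega)$ vanishes on $\varphi_F$ by Theorem \ref{original Bavard}.) Hence $\mu_{PB}(\varphi_F)=\int_\Sigma F\omega\neq 0$, as wanted.

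Two points require care. First, to quote Lemma \ref{general nonextendability of qm} verbatim one needs $\mu_{PB}$ to be $\hG$-invariant. Since $\mathrm{Sympeo}_0(\Sigma,\omega)=\overline{\Ham(\Sigma,\omega)}^{C^0}\cdot\Symp_0(\Sigma,\omega)$ — every class of $\mathrm{Im}(\theta_\omega)$ is realized already in $\Symp_0(\Sigma,\omega)$ — this reduces to invariance of $\mu_{PB}$ under conjugation by $\overline{\Ham(\Sigma,\omega)}^{C^0}$ (automatic, $\mu_{PB}$ being a homogeneous quasimorphism) and by $\Symp_0(\Sigma,\omega)$, which follows from the $\Symp_0(\Sigma,\omega)$-invariance of $\mu_P-\mu_B$ on $\Ham(\Sigma,\omega)$ and uniqueness of the Entov--Polterovich--Py extension of Theorem \ref{mfh property}. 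Alternatively — and this is the cleaner route — one sidesteps invariance altogether: a hypothetical extension of $\mu_{PB}$ is a semi-homogeneous $\hat\nu$-quasimorphism $\hat\mu$ on $\mathrm{Sympeo}_0(\Sigma,\omega)$, and Lemma \ref{extendbility implies vanishing} (whose hypothesis holds for $f=g_0$, $g=f_0$, and which uses only Lemma \ref{conjinv of qm} for the automatic conjugation-invariance of $\hat\mu$) forces $\hat\mu([g_0,f_0])=0$, i.e. $\mu_{PB}(\varphi_F^{-1})=0$; as $\mu_{PB}$ is a genuine homogeneous quasimorphism this reads $-\int_\Sigma F\omega=0$, a contradiction.

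The only step that is not bookkeeping is the identity $\mu_B(\varphi_F)=0$, and it is worth stressing why it does not simply follow from $\mu_B$ being a Calabi quasimorphism: $\mathrm{Im}(\iota)$ is a neighborhood of the heavy curve $C$, hence not displaceable, so the displaceable Calabi property says nothing about $\mu_B(\varphi_F)$; the vanishing instead comes from $\mathrm{Im}(\Gamma_2)$ being extendable to $\Symp_0(\Sigma,\omega)$ combined with $\varphi_F$ being a "universal commutator" with $\scl_{\Symp_0(\Sigma,\omega)}(\varphi_F)=0$. This is exactly why the $C^0$-extension is built for $\mu_P-\mu_B$ rather than for $\mu_P$: alone, $\mu_P$ violates hypothesis (1) of Theorem \ref{mfh property} (it restricts to the nonzero Calabi homomorphism on small discs), while $\mu_P-\mu_B$ satisfies it and still detects $\varphi_F$ because $\mu_P$ — unlike $\mu_B$ — is not extendable to $\Symp_0(\Sigma,\omega)$ (Theorem \ref{Py nonex}).
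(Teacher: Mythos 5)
Your proof is correct and follows essentially the same route as the paper: show $\mu_{PB}([f_0,g_0])=\mu_P([f_0,g_0])-\mu_B([f_0,g_0])>0$ by recycling the computation from Theorem \ref{two scl different on Ham} and the extendability of $\mathrm{Im}(\Gamma_2)$ to $\Symp_0(\Sigma,\omega)$ together with Lemma \ref{extendbility implies vanishing}, then conclude via Lemma \ref{general nonextendability of qm}. Your worry about $\hG$-invariance in quoting Lemma \ref{general nonextendability of qm} is well-spotted but moot, since — as your "cleaner route" observes — that lemma's proof never uses the invariance hypothesis: it reduces immediately to Lemma \ref{extendbility implies vanishing} applied to the hypothetical extension $\hat\mu$, whose conjugation-invariance is automatic by Lemma \ref{conjinv of qm}.
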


\begin{proof}
Since $[f_0,g_0]\in\Ham(\Sigma,\omega)$,
\[\mu_{PB}([f_0,g_0])=(\mu_P-\mu_B)([f_0,g_0])=\mu_P([f_0,g_0])-\mu_B([f_0,g_0]).\]
As we showed in the proof of Theorem \ref{two scl different on Ham}, $\mu_P([f_0,g_0])=\int_{\Sigma}G \omega>0$.
As explained in Subsection \ref{extension subsec}, each element of $\mathrm{Im}(\Gamma_2)$ is extendable to $\mathrm{Symp}_0(\Sigma,\omega)$.
Thus, by Lemma \ref{extendbility implies vanishing}, $\mu_B([f_0,g_0])=0$.
Hence, $\mu_{PB}([f_0,g_0])>0$.
Therefore,  by Lemma \ref{general nonextendability of qm}, $\mu_P$ is non-extendable to $\mathrm{Sympeo}_0(\Sigma,\omega)$.
\end{proof}
Since $\mathrm{Ker}(\theta_\omega)=\overline{\Ham(\Sigma,\omega)}^{C^0}$,
Corollary \ref{c0 non-section} immediately follows from Theorem \ref{C0 nonextend} and Proposition \ref{prop:section_hom homog}.

\section{The space of non-extendable quasimorphisms}\label{2nd coh section}
For a group $G$, let $H^{\bullet}_b(G; \RR)$ denote the bounded cohomology of $G$. 
Let $\delta_G \colon Q(G) \to H_b^2(G;\RR)$ denote the differential and $c_G \colon H_b^2(G;\RR) \to H^2(G;\RR)$ the comparison map.
For a group $\hG$ and its normal subgroup $G$, let $i \colon G \to \hG$ denote the inclusion. The map $i$ induces maps $i^\ast \colon H^2(\hG;\RR) \to H^2(G;\RR)$,
$i_Q^\ast \colon Q(\hG) \to Q(G)$ and $i_b^\ast \colon H_b^2(\hG;\RR)\to H_b^2(G;\RR)$.
For more precise descriptions on the bounded cohomology, see \cite{Ca,Fr}.

To prove Theorem \ref{thm:embed}, we use the following well-known facts, see  \cite[Section 2.4]{Ca} for example.

\begin{lem}\label{lem:exact}
Let $G$ be a group.
Then, there is an exact sequence
\[ 0 \to H^1(G;\RR) \to Q(G) \xrightarrow{\delta_G} H_b^2(G;\RR) \xrightarrow{c_G} H^2(G;\RR).\]
\end{lem}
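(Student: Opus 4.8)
The plan is to realize all four spaces and the three maps at the level of the (bounded) inhomogeneous bar complex of $G$ and then unwind the definitions. Recall the standard facts I will use: $H^1(G;\RR)$ is canonically $\mathrm{Hom}(G,\RR)$ and consists precisely of the homogeneous quasimorphisms of defect $0$; the first map in the sequence is the inclusion $\mathrm{Hom}(G,\RR)\hookrightarrow Q(G)$; and $\delta_G$ sends a homogeneous quasimorphism $\phi$ to the class of the cocycle $d\phi(g,h)=\phi(g)-\phi(gh)+\phi(h)$, which is bounded (by $D(\phi)$), hence defines an element of $H_b^2(G;\RR)$. I will also use that the homogenization $\bar\psi$ of a quasimorphism $\psi$ exists, differs from $\psi$ by a bounded function, and equals $\psi$ when $\psi$ is already a homomorphism.

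First I would treat exactness at $H^1(G;\RR)$ and at $Q(G)$. Injectivity of $\mathrm{Hom}(G,\RR)\to Q(G)$ is immediate since a nonzero homomorphism is a nonzero function. A homomorphism $\phi$ satisfies $d\phi=0$, so $\delta_G(\phi)=0$; this gives $\mathrm{Im}(H^1\to Q(G))\subset\ker\delta_G$. Conversely, if $\phi\in Q(G)$ and $\delta_G(\phi)=0$, then $d\phi=d\beta$ for a bounded $1$-cochain $\beta$, hence $h:=\phi-\beta$ is a $1$-cocycle, i.e.\ a homomorphism $G\to\RR$. Writing $\phi=h+\beta$ and passing to homogenizations, $\phi=\bar\phi$, $\bar h=h$, and $\overline{h+\beta}(g)=h(g)+\lim_n\beta(g^n)/n=h(g)$ because $\beta$ is bounded; therefore $\phi=h\in H^1(G;\RR)$. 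This proves exactness at $Q(G)$.

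Next I would prove exactness at $H_b^2(G;\RR)$. The composite $c_G\circ\delta_G$ vanishes because $d\phi$ is by construction a coboundary in the ordinary (unbounded) bar complex, so it represents $0$ in $H^2(G;\RR)$; thus $\mathrm{Im}(\delta_G)\subset\ker(c_G)$. For the reverse inclusion, take $\alpha\in\ker(c_G)$ and a bounded cocycle $z$ representing $\alpha$. Since $[z]=0$ in $H^2(G;\RR)$, we can write $z=d\psi$ for some a priori unbounded $1$-cochain $\psi\colon G\to\RR$; boundedness of $d\psi=z$ is exactly the statement that $\psi$ is a quasimorphism. Its homogenization $\bar\psi$ differs from $\psi$ by a bounded function, so $d\bar\psi$ and $z=d\psi$ differ by the coboundary of a bounded $1$-cochain and therefore represent the same class in $H_b^2(G;\RR)$; hence $\delta_G(\bar\psi)=\alpha$, giving $\ker(c_G)\subset\mathrm{Im}(\delta_G)$.

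The only genuinely delicate point is the bookkeeping around homogenization: one needs to know that the homogenization of a quasimorphism exists, lies within bounded distance of the original, fixes homomorphisms, and is compatible with passing between the bounded and unbounded complexes — all of which are standard (see \cite[Section 2.4]{Ca}). Beyond that, every step is a direct manipulation of cocycles and coboundaries, so I expect no serious obstacle.
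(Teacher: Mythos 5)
Your proof is correct, and it is the standard argument. Note, though, that the paper does not actually prove Lemma~\ref{lem:exact}: it states it as a well-known fact and cites \cite[Section 2.4]{Ca} for the details, so there is no ``paper's own proof'' to compare against. What you have written is essentially the argument found in Calegari's book (and Frigerio's): identify $H^1(G;\RR)$ with $\mathrm{Hom}(G,\RR)$, work in the inhomogeneous bar complex, and use homogenization of quasimorphisms to pass back and forth between bounded $2$-coboundaries and homogeneous quasimorphisms. Your handling of the two nontrivial inclusions (recovering a homomorphism $h=\phi-\beta$ when $\delta_G(\phi)=0$, and producing a homogeneous primitive $\bar\psi$ of a bounded cocycle that dies in ordinary cohomology) is exactly right, including the observation that $\overline{h+\beta}=h$ because $\lim_n\beta(g^n)/n=0$ for bounded $\beta$. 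No gaps.
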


\begin{thm}[\cite{Gr}]\label{lem:amenable}
  Let $1 \to G  \xrightarrow{i} \hG \xrightarrow{q} H \to 1$ be an exact sequence of groups.
If $H$ is amenable,
then the natural homomorphisms $i_b^\ast\colon H^\bullet_b(\hG;\RR) \to H^\bullet_b(G;\RR)^{\hG}$ are isomorphisms in each dimension.
\end{thm}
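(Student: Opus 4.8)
This is Gromov's mapping theorem in the amenable case, and the plan is to reconstruct the standard homological-algebra proof, in the framework of Ivanov and Monod. Two facts do all the work. First, for a discrete group $\Gamma$ and a dual Banach $\Gamma$-module $V$, the bounded cohomology $H^\bullet_b(\Gamma;V)$ is computed, functorially, by the $\Gamma$-invariants of \emph{any} strong resolution of $V$ by relatively injective $\Gamma$-modules --- in particular by the homogeneous $\ell^\infty$-bar complex $\ell^\infty(\Gamma^{\bullet+1})$. Second, the Gromov--Ivanov amenability criterion, which is the only place the hypothesis on $H$ enters: since $H$ is amenable, $H^p_b(H;W)=0$ for all $p\geq 1$ and all dual Banach $H$-modules $W$, while $H^0_b(H;W)=W^{H}$.

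With these in hand I would build the Lyndon--Hochschild--Serre spectral sequence for the extension $1\to G\to\hG\to H\to 1$ in bounded cohomology, following the standard construction (see \cite{Fr}). One forms the double complex whose $(p,q)$-term combines the homogeneous bar resolution of $H$ in the $p$-direction with the homogeneous bar resolution of $\hG$ restricted to $G$ in the $q$-direction, and takes $\hG$-invariants. Running the $q$-differential first uses that $\ell^\infty(\hG^{\bullet+1})$ is a strong relatively injective $G$-resolution of $\RR$, so each column has cohomology $H^q_b(G;\RR)$, carrying a residual conjugation action of $\hG$; since inner automorphisms act trivially on bounded cohomology (a standard fact, cf.\ \cite{Ca,Fr}), this action factors through $H$, and the reduced space $H^q_b(G;\RR)$ is then a dual Banach $H$-module. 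The associated first-quadrant spectral sequence has $E_2^{p,q}=H^p_b\big(H;\,H^q_b(G;\RR)\big)$ and converges to $H^{p+q}_b(\hG;\RR)$.

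Now the amenability criterion applies: $E_2^{p,q}=0$ for every $p\geq 1$, so the spectral sequence degenerates at $E_2$ and the edge homomorphism gives isomorphisms $H^n_b(\hG;\RR)\xrightarrow{\ \cong\ }E_2^{0,n}=H^n_b(G;\RR)^{H}=H^n_b(G;\RR)^{\hG}$ for all $n$. The last point is that this edge isomorphism is the restriction map $i_b^\ast$, which is immediate from the construction: the edge map $H^n_b(\hG;\RR)\to E_\infty^{0,n}\hookrightarrow H^n_b(G;\RR)$ is induced by the inclusion of $\hG$-invariant cochains into $G$-invariant cochains, i.e.\ by restriction along $i$. (That $i_b^\ast$ lands in $H^\bullet_b(G;\RR)^{\hG}$ at all is automatic: conjugation by $\hat g\in\hG$ on $G$ extends to conjugation by $\hat g$ on $\hG$, which acts trivially on $H^\bullet_b(\hG;\RR)$.)

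The main obstacle is functional-analytic bookkeeping rather than anything conceptual: one must check that the reduced modules $H^q_b(G;\RR)$ are genuine dual Banach $H$-modules so that the vanishing in the second fact applies, verify that the two filtrations of the double complex produce the asserted $E_2$-page, and handle the passage between reduced and unreduced bounded cohomology. As a fallback, I would mention the spectral-sequence-free route: using a left-invariant mean on $H$ to average $\hG$-cochains down to $G$-invariant cochains and back, one obtains explicit chain maps that are mutually homotopy-inverse, and the norm estimates there carry essentially the same content as the degeneration argument, unpacked by hand.
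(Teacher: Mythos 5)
You should note at the outset that the paper does not prove Theorem \ref{lem:amenable} at all: it is imported from the literature (Gromov; see also Monod's and Frigerio's monographs), so there is no internal argument to compare yours with, and your proposal must stand on its own. As it stands, the main route has a genuine gap at the decisive step. You set $E_2^{p,q}=H^p_b\bigl(H;H^q_b(G;\RR)\bigr)$ and kill $p\geq 1$ by amenability of $H$. But the vanishing theorem for amenable groups applies to \emph{dual} Banach $H$-modules, and $H^q_b(G;\RR)$ is in general not one: for $q\geq 3$ the canonical seminorm need not be a norm (so the space is not even Banach), and the reduced quotient you invoke is not naturally a dual module (the closure of the coboundaries need not be weak-$\ast$ closed). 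Moreover, identifying the $E_2$-page of a bounded Lyndon--Hochschild--Serre spectral sequence in this naive form is precisely the delicate point of Monod's treatment and does not follow from ``running the $q$-differential first'': cohomology does not commute with $\ell^\infty(H^{p+1};-)$ without uniform control of primitives. So calling this ``functional-analytic bookkeeping'' understates it; as formulated, the degeneration argument is not available.

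The repair is to apply amenability one level down, to honest dual modules. Since $\ell^\infty(\hG^{q+1})$ is also a strong, relatively injective resolution of $\RR$ over $G$, the complex $C^q=\ell^\infty(\hG^{q+1})^{G}=\ell^\infty\bigl(G\backslash\hG^{q+1}\bigr)$ computes $H^\bullet_b(G;\RR)$ compatibly with the $\hG$-action; each $C^q$ genuinely is a dual Banach $H$-module ($\ell^\infty$ of a free $H$-set), and $(C^\bullet)^{H}=\ell^\infty(\hG^{\bullet+1})^{\hG}$ computes $H^\bullet_b(\hG;\RR)$. Feeding these $C^q$, rather than $H^q_b(G;\RR)$, into the double-complex/averaging argument (or simply citing Monod's coamenability results or Noskov) yields the statement, with the edge map visibly equal to $i_b^\ast$. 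Note also that your fallback, as stated, easily gives split injectivity of $i_b^\ast$ (average a $G$-invariant cochain on $\hG^{\bullet+1}$ over $H$ with an invariant mean), but surjectivity onto $H^\bullet_b(G;\RR)^{\hG}$ is not immediate from averaging: for a cocycle whose class is merely $H$-invariant, the primitives of $h\cdot c-c$ need not be uniformly bounded in $h$, so one cannot average them; this is exactly where the resolution-level argument (or the cited theorem) is really needed.
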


Here $H^\bullet_b(G;\RR)^{\hG}$ is the invariant part of $H^\bullet_b(G;\RR)$ under the action of $\hG$ on $G$ by outer automorphisms.
Similarly, $H^\bullet (G;\RR)^{\hG}$ is the $\hG$-invariant part of $H^\bullet (G;\RR)$.

By considering a
$\hG$-invariant cochain complex $C^\bullet(G;\RR)^{\hG}$ instead of $C^\bullet(G;\RR)$,
we can obtain a $\hG$-invariant version of Lemma \ref{lem:exact}.

\begin{lem}\label{lem:G^inv_exact}
Let $\hG$ be a group and $G$ its normal subgroup.
Then, there is an exact sequence
\[ 0 \to H^1(G;\RR)^{\hG} \to Q(G)^{\hG} \xrightarrow{\delta_G} H_b^2(G;\RR)^{\hG} \xrightarrow{c_G} H^2(G;\RR)^{\hG}.\]
\end{lem}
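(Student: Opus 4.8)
The plan is to mimic the proof of the classical exact sequence (Lemma \ref{lem:exact}) verbatim, but working throughout inside the $\hG$-invariant subcomplex $C^\bullet(G;\RR)^{\hG}$ rather than $C^\bullet(G;\RR)$. Recall that the classical sequence is obtained by splicing together two facts: first, a homogeneous quasimorphism $\phi$ on $G$ has a coboundary $\delta\phi$ that is a bounded $2$-cocycle, giving the map $\delta_G$; second, $\ker(\delta_G)$ consists exactly of the homogeneous homomorphisms, i.e. $H^1(G;\RR)$, and the image of $\delta_G$ in $H^2_b$ maps to zero in $H^2$ precisely because $\delta\phi$ is already an (ordinary, unbounded) coboundary, namely of the $1$-cochain $\phi$ itself. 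So the content to check is that each of these maps and identifications restricts correctly to the $\hG$-invariant parts.

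First I would make precise what $\hG$-invariance means at the cochain level: $\hG$ acts on $G$ by conjugation, hence on $C^\bullet(G;\RR)$, and $C^\bullet(G;\RR)^{\hG}$ is the subcomplex of cochains fixed by this action; since $G$ acts trivially on its own cohomology (inner automorphisms act trivially), this action factors through $\hG/G$, and one has $H^\bullet(C^\bullet(G;\RR)^{\hG}) = H^\bullet(G;\RR)^{\hG}$ and likewise in the bounded setting — this is the standard averaging/invariance argument and I would cite it as in Theorem \ref{lem:amenable}'s setup. Next I would observe that a homogeneous quasimorphism $\phi$ is $\hG$-invariant, i.e. $\phi \in Q(G)^{\hG}$, if and only if $\phi$ as a $1$-cochain lies in $C^1(G;\RR)^{\hG}$ — this is literally the definition $\phi(\hg g \hg^{-1}) = \phi(g)$. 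Consequently $\delta_G$ carries $Q(G)^{\hG}$ into the $\hG$-invariant bounded $2$-cocycles, inducing $\delta_G \colon Q(G)^{\hG} \to H^2_b(G;\RR)^{\hG}$, and the composite $c_G \circ \delta_G$ vanishes exactly as before (the witnessing $1$-cochain $\phi$ is now itself $\hG$-invariant, so the nullhomotopy lives in the invariant complex). Injectivity of $H^1(G;\RR)^{\hG} \to Q(G)^{\hG}$ and exactness at $Q(G)^{\hG}$ both follow because $\ker \delta_G \cap Q(G)^{\hG}$ = ($\hG$-invariant homogeneous homomorphisms) = $\mathrm{Hom}(G,\RR)^{\hG} = H^1(G;\RR)^{\hG}$, using that $\delta\phi = 0$ forces $\phi$ to be a homomorphism.

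The one point requiring genuine care — and the step I expect to be the main obstacle — is exactness at $H^2_b(G;\RR)^{\hG}$, i.e. showing that if an $\hG$-invariant bounded $2$-cocycle $\beta$ becomes a coboundary in $H^2(G;\RR)^{\hG}$, then $\beta = \delta_G\phi$ for some $\hG$-invariant homogeneous quasimorphism $\phi$. The naive approach writes $\beta = \delta\alpha$ for some (possibly non-invariant) $1$-cochain $\alpha$; then $\alpha$ is a quasimorphism because $\beta$ is bounded, and one homogenizes. The subtlety is that a priori $\alpha$ need not be $\hG$-invariant even though $\beta$ is. However, because $\beta$ is an $\hG$-invariant coboundary in the invariant complex, one may choose $\alpha$ inside $C^1(G;\RR)^{\hG}$ from the outset (this is exactly the statement that the cohomology of the invariant complex computes $H^\bullet(G;\RR)^{\hG}$, so an invariant cocycle that is a coboundary is the coboundary of an invariant cochain); then $\alpha \in C^1(G;\RR)^{\hG}$ is a quasimorphism, its homogenization $\bar\alpha$ is again a homogeneous quasimorphism, it is still $\hG$-invariant since homogenization commutes with the (linear, conjugation-induced) $\hG$-action, and $\delta_G \bar\alpha = \delta_G \alpha = \beta$ in $H^2_b(G;\RR)^{\hG}$ because $\alpha - \bar\alpha$ is a bounded $1$-cochain. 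I would spell this averaging/homogenization compatibility out explicitly, since it is the only place the argument is not a formal transcription of the non-invariant case; everything else is the standard diagram chase of Lemma \ref{lem:exact} carried out one functor down.
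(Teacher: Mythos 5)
Your proposal is essentially a careful expansion of the paper's one-sentence proof (``By considering a $\hG$-invariant cochain complex $C^\bullet(G;\RR)^{\hG}$ instead of $C^\bullet(G;\RR)$, we can obtain a $\hG$-invariant version of Lemma~\ref{lem:exact}''), and you correctly single out exactness at $H^2_b(G;\RR)^{\hG}$ as the only non-formal step. However, the justification you offer for that step does not hold up, so the proof as written has a genuine gap.

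The problematic sentence is the parenthetical: ``this is exactly the statement that the cohomology of the invariant complex computes $H^\bullet(G;\RR)^{\hG}$, so an invariant cocycle that is a coboundary is the coboundary of an invariant cochain.'' Neither half of this is true in general. The cohomology $H^\bullet\!\left(C^\bullet(G;\RR)^{\hG}\right)$ of the naive invariant subcomplex and the $\hG$-invariant part $H^\bullet(G;\RR)^{\hG}$ of the cohomology are different objects: taking $\hG$-invariants (equivalently $(\hG/G)$-invariants, since inner automorphisms act trivially) is only left exact, and the failure is governed by higher group cohomology of $\hG/G$. Concretely, for your key step: starting from $[\beta]\in H^2_b(G;\RR)^{\hG}$ with $c_G[\beta]=0$, Lemma~\ref{lem:exact} gives some $\phi_0\in Q(G)$ with $\delta_G\phi_0=[\beta]$, and for each $\hg\in\hG$ one gets $\hg\cdot\phi_0-\phi_0\in\ker\delta_G=H^1(G;\RR)$; the map $\hg\mapsto \hg\cdot\phi_0-\phi_0$ is a $1$-cocycle on $\hG$ (factoring through $\hG/G$) with values in $H^1(G;\RR)$, and an $\hG$-invariant $\phi$ with $\delta_G\phi=[\beta]$ exists if and only if this cocycle is a coboundary. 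The obstruction therefore lives in $H^1\!\left(\hG/G;H^1(G;\RR)\right)$, which need not vanish, and your argument supplies no reason for it to vanish here. (The same issue already arises one step earlier, in assuming one may represent $[\beta]$ by an $\hG$-invariant bounded $2$-cocycle.) The ``averaging'' you allude to from the setup of Theorem~\ref{lem:amenable} does not apply: that theorem concerns the map $H^\bullet_b(\hG)\to H^\bullet_b(G)^{\hG}$, not the comparison between the cohomology of an invariant subcomplex and the invariants of cohomology, and even amenability of $\hG/G$ does not kill $H^1(\hG/G;H^1(G;\RR))$ when $H^1(G;\RR)$ is nonzero.

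To be fair, the paper's own proof is equally terse and does not address this point; but in the only place the lemma is used, the proof of Theorem~\ref{thm:embed} also assumes $H^1(G;\RR)=0$, under which the obstruction group above is zero and the exactness at $H^2_b(G;\RR)^{\hG}$ is immediate (the $\phi_0$ produced by Lemma~\ref{lem:exact} is then automatically $\hG$-invariant). A clean fix for your write-up is either to add the hypothesis $H^1(G;\RR)=0$ (harmless for the paper's applications) or to prove exactness at $H^2_b(G;\RR)^{\hG}$ only modulo the image of $H^1\!\left(\hG/G;H^1(G;\RR)\right)$; but you cannot deduce it from ``the invariant complex computes invariant cohomology,'' which is the step that fails.
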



\begin{proof}[Proof of Theorem \ref{thm:embed}]
  We consider the following commutative diagram. By Lemma \ref{lem:exact} and \ref{lem:G^inv_exact}, each horizontal sequences are exact.

  \begin{tikzpicture}[auto]
  \node (11) at (2, 1.5) {$Q(\hG)$}; \node (21) at (5, 1.5) {$H_b^2(\hG;\RR)$};
  \node (31) at (8, 1.5) {$H^2(\hG;\RR)$};
  \node (00) at (0, 0) {$H^1(G;\RR)^{\hG}$}; \node (10) at (2, 0) {$Q(G)^{\hG}$};
  \node (20) at (5, 0) {$H^2_b(G;\RR)^{\hG}$};  \node (30) at (8, 0) {$H^2(G;\RR)^{\hG}$};
  \node (32) at (8, 3) {$H^2(H;\RR)$};
  \draw[->] (11) to node {$\delta_{\hG}$} (21);
  \draw[->] (21) to node {$c_{\hG}$} (31);
  \draw[->] (20) to node {$c_{G}$} (30);
  \draw[->] (00) to node {} (10);
  \draw[->] (10) to node {$\delta_{G}$} (20);
  \draw[->] (11) to node {$i_Q^\ast$} (10);
  \draw[->] (21) to node {$i_b^\ast$} (20);
  \draw[->] (31) to node {$i^\ast$} (30);
  \draw[->] (32) to node {$q^\ast$} (31);
  \end{tikzpicture}

Since $H$ is amenable, by Theorem \ref{lem:amenable}, there is the inverse $(i_b^\ast)^{-1}\colon H^2(G;\RR)^{\hG} \to H^2(\hG;\RR)$ of $i_b^\ast \colon H^2(\hG;\RR) \to H^2(G;\RR)^{\hG}$.
Thus, define a map $\alpha \colon Q(G)^{\hG} \to H^2(\hG;\RR)$ by
\[\alpha(\phi) = c_{\hG} \circ (i_b^\ast)^{-1} \circ \delta_G(\phi).\]
for $\phi \in Q(G)^{\hG}$.

To construct the map $\tau_{\hG} \colon Q(G)^{\hG}/Q(\hG) \to \mathrm{Im}(c_{\hG})\cap\mathrm{Im}(q^\ast)$ from $\alpha$,
it is sufficient to prove $\alpha\circ i_Q^\ast(Q(\hG))=\{0\}$ and $\mathrm{Im}(\alpha)\subset\mathrm{Im}(c_{\hG})\cap\mathrm{Im}(q^\ast)$.
For any $\hat\phi \in Q(\hG)$, $\alpha(i^\ast_Q(\hat\phi))= c_{\hG} \circ \delta_{\hG}(\hat\phi)=0$ and hence  $\alpha\circ i_Q^\ast(Q(\hG))=\{0\}$.
Since $(i_b^\ast)^{-1}\circ\delta_G(\phi)\in H_b^2(\hG;\RR)$ for any $\phi \in Q(G)^{\hG}$, $\mathrm{Im}(\alpha)\subset\mathrm{Im}(c_{\hG})$.

To prove $\mathrm{Im}(\alpha)\subset\mathrm{Im}(q^\ast)$, we use the following exact sequence.
  The exact sequence $1 \to G \xrightarrow{i} \hG \xrightarrow{q} H \to 1$ gives rise to the 7-term exact sequence
  \begin{equation*}
  \begin{split}
    0 \to H^1(H;\RR) \to H^1(\hG;\RR) \to H^1(G;\RR)^{\hG} \to H^2(H;\RR) \\
    \xrightarrow{q^\ast} {\rm Ker}(i^\ast)   \to H^1(H;H^1(G;\RR)) \to H^3(H;\RR)
  \end{split}
  \end{equation*}
(see \cite{DHW} for example).  Since $H^1(G;\RR)=0$, the map
  \[q^\ast \colon H^2(H;\RR) \to {\rm Ker}(i^\ast)\]
   is an isomorphism.
      Since
   \[i^\ast \circ \alpha (\phi)= c_{G} \circ \delta_{G}(\phi)=0\]
  for $\phi \in Q(G)^{\hG}$, ${\rm Im} (\alpha) \subset {\rm Ker}(i^\ast)=\mathrm{Im}(q^\ast)$.
 Hence we prove $\mathrm{Im}(\alpha)\subset\mathrm{Im}(c_{\hG})\cap\mathrm{Im}(q^\ast)$ and $\alpha$ induces the map $\tau_{\hG} \colon Q(G)^{\hG}/Q(\hG) \to \mathrm{Im}(c_{\hG})\cap\mathrm{Im}(q^\ast)$.

Now, we prove that $\tau_{\hG}$ is injective.
Take $\phi\in Q(G)^{\hG}$ such that $\tau_{\hG}([\phi])=\alpha(\phi)=0$.
Then, $(i_b^\ast)^{-1} \circ \delta_G(\phi)\in\rm{Ker}(c_{\hG})=\rm{Im}(\delta_{\hG})$.
Thus, there exists $\hat\phi \in Q(\hG)$ such that
\[(i_b^\ast)^{-1} \circ \delta_G(\phi)=\delta_{\hG}(\hat\phi).\]
Thus
\[\delta_G(\phi)=i_b^\ast \circ\delta_{\hG}(\hat\phi)= \delta_{G} \circ i^\ast_Q (\hat\phi).\]
Since $H^1(G;\RR)=0$, $\delta_{G}$ is injective and hence $\phi = i^\ast_Q (\hat\phi)$.
Therefore, $\tau_{\hG}$ is injective.

  Now, we prove that $\tau_{\hG}$ is surjective.
  Take $y\in\mathrm{Im}(c_{\hG})\cap\mathrm{Im}(q^\ast)$.
  Since $y\in\mathrm{Im}(c_{\hG})$, there is $x\in H_b^2(\hG;\RR)$ such that $c_{\hG}(x)=y$.
  Since $y\in\mathrm{Im}(q^\ast)=\mathrm{Ker}(i^\ast)$, $i^\ast(y)=0$.
  Thus,
  \[c_G\circ i_b^\ast(x)=i^\ast\circ c_{\hG}(x)=i^\ast(y)=0.\]
  Hence, there is $\phi\in Q(G)^{\hG}$ such that $\delta_G(\phi)=i_b^\ast(x)$.
  Then, by $\delta_G(\phi)=i_b^\ast(x)$ and $c_{\hG}(x)=y$,
  \[\tau_{\hG}([\phi])=\alpha(\phi)=c_{\hG} \circ (i_b^\ast)^{-1} \circ \delta_G(\phi)=c_{\hG} \circ (i_b^\ast)^{-1} \circ i_b^\ast(x)=c_{\hG}(x)=y.\]
  Therefore, $\tau_{\hG}$ is surjective.

    Since $q^\ast \colon H^2(H;\RR) \to {\rm Ker}(i^\ast)=\mathrm{Im}(q^\ast)$ is an isomorphism, we define $\tau_H$ as $\tau_H = {(q^\ast)}^{-1} \circ \tau_{\hG}\colon Q(G)^{\hG}/Q(\hG) \to H^2(H;\RR)$.
Since $\tau_{\hG}$ is injective,  $\tau_H$ is also injective.
\end{proof}

\begin{proof}[Proof of Corollary \ref{symp:embed}]
Recall that every commutative group is amenable  \cite{vN} (see also \cite{Ca,Fr}).
Since $H^1(M; \RR)/\Gamma_\omega$ is a commutative group, $H^1(M; \RR)/\Gamma_\omega$ is amenable.
By Banyaga's theorem \cite{Ban, Ban97}, $\Ham(M, \omega)$ is known to be perfect, in  particular, $H^1\left(\Ham(M, \omega);\RR\right)=0$.
Thus, Corollary \ref{symp:embed} follows from Theorem \ref{thm:embed} and the exact sequence
\[1 \to \Ham(M, \omega)  \xrightarrow{i} \Symp_0(M, \omega) \xrightarrow{\flux_\omega} H^1(M; \RR)/\Gamma_\omega \to 1.\]
\end{proof}

\begin{proof}[Proof of Corollary \ref{symp:dim}]
We use the following exact sequence
\[1 \to \Ham(M, \omega)  \xrightarrow{i} \hG \xrightarrow{\flux_\omega|_{\hG}} H \to 1.\]
By a similar argument as the proof of Corollary \ref{symp:embed},  $H^1\left(\Ham(M, \omega);\RR\right)=0$ and $H$ is amenable.
Thus, by Theorem \ref{thm:embed}, there is an injective homomorphism from $Q\left(\Ham(M,\omega)\right)^{\hG}/Q(\hG)$ to $H^2(H;\RR)$.
Since $H$ is isomorphic to $\ZZ^N$,
\[\mathrm{dim}_{\RR}\left(H^2(H;\RR)\right)=\mathrm{dim}_{\RR}\left(H^2(\ZZ^N;\RR)\right)=N(N-1)/2.\]
Thus,
  \[\mathrm{dim}_{\RR}\left(Q\left(\Ham(M,\omega)\right)^{\hG}/Q(\hG)\right)\leq\mathrm{dim}_{\RR}\left(H^2(H;\RR)\right)= N(N-1)/2.\]
\end{proof}

\begin{proof}[Proof of Corollary \ref{Py unique}]
We use the following exact sequence
\[1 \to \Ham(\Sigma, \omega)  \xrightarrow{i} \hG_0 \xrightarrow{\flux_\omega|_{\hG_0}} \ZZ\langle a,b\rangle \to 1.\]
Since $\ZZ\langle a,b\rangle$ is isomorphic to $\ZZ^2$, by Corollary \ref{symp:dim},
\[\mathrm{dim}_{\RR}\left(Q\left(\Ham(M,\omega)\right)^{\hG_0}/Q(\hG_0)\right)\leq2(2-1)/2=1.\]
Since $f_0,g_0\in\hG_0$, by a similar argument as the proof of Theorem \ref{Py nonex}, we see that $[\mu_P]$ is a non-trivial element of $Q\left(\Ham(\Sigma,\omega)\right)^{\hG_0}/Q(\hG_0)$.
Hence, we complete the proof of Corollary \ref{Py unique}.
\end{proof}

\section*{Acknowledgment}
The authors would like to thank Shuhei Maruyama for telling them the 7-term exact sequence and Takahiro Matsushita for fruitful discussions on Proposition \ref{prop:section_hom homog} and \ref{clhgbg and clbg}.
They also thank Kazuhiko Fukui, Tomohiko Ishida, Jarek K\c{e}dra, Atsuhide Mori, Ryuma Orita, Kaoru Ono and Leonid Polterovich  for nice advices and comments.
This work has been supported by JSPS KAKENHI Grant Number JP18J00765.


\begin{thebibliography}{9}


\bibitem[Ban78]{Ban} A.~Banyaga,
	\textit{Sur la structure du groupe des diff\'eomorphismes qui pr\'eservent une forme symplectique},
	Comment.\ Math.\ Helv.\ \textbf{53} (1978), no.~2, 174--227.

\bibitem[Ban97]{Ban97} A.~Banyaga,
	\textit{The structure of classical diffeomorphism groups},
        Mathematics and its Applications, 400. Kluwer Academic Publishers Group, Dordrecht,  (1997).

  \bibitem[Bav]{Bav} C.~Bavard,
  \textit{Longueur stable des commutateurs},
  Enseign. Math., \textbf{37} (1991), 109--150.


\bibitem[Bra]{Bra} M.~Brandenbursky,
	\textit{Bi-invariant metrics and quasi-morphisms on groups of Hamiltonian diffeomorphisms of surfaces},
	Internat.\ J. Math. \textbf{26} (2015) no.~9 1550066.



\bibitem[BKS]{BKS} M.~Brandenbursky, J.~K\c{e}dra and E.~Shelukhin,
	\textit{On the autonomous norm on the group of Hamiltonian diffeomorphisms of the torus},
	Commun.\ Contemp.\ Math.\ \textbf{20} (2018), no.~2, 1750042.

\bibitem[BM]{BM}   M. Brandenbursky and M. Marcinkowski,
\textit{Aut-invariant norms and Aut-invariant quasimorphisms on free and surface group},
Comment.\ Math.\ Helv.\, \textbf{94} (2019), 661-–687.

  \bibitem[Bro]{Br} R.~Brooks,
  \textit{Some remarks on bounded cohomology},
  Ann. of Math. Stud., \textbf{97} (1981) 53--63.

  \bibitem[BIP]{BIP} D.~Burago, S.~Ivanov and L.~Polterovich, \textit{Conjugation-invariant norms on groups of geometric origin},
  Adv. Stud. Pure. Math., \textbf{52} (2008), 221--250.

  \bibitem[Cala]{Cala} E.~Calabi,
\textit{On the group of automorphisms of a symplectic manifold}, Problem in Analysis (Lectures at the Sympos. in Honor of Salomon Bochner, Princeton University, Princeton, NJ, 1969), Princeton University Press, Princeton, NJ, 1970, 1–-26.

  \bibitem[Cale]{Ca} D.~Calegari,
  \textit{scl}, MSJ Memoirs20, Math. Soc. Japan, Tokyo, (2009).

  \bibitem[CZ]{CZ} D.~Calegari and D.~Zhuang, \textit{Stable $W$-lengths}, Contemp. Math, \textbf{560} (2011), 145--169.



\bibitem[CGHS]{CGHS}
D.~Cristofaro-Gardiner, V.~Humili\`ere and S.~Seyfaddini,
\textit{Proof of the simplicity conjecture},
arXiv:2001.01792v1.


  \bibitem[CHH]{CHH} T.~D.~Cochran, S.~Harvey and P.~D.~Horn, \textit{Higher-order signature cocycles for subgroups of mapping class groups and homology cylinders}, Int. Math. Res. Not. IMRN, \textbf{14} (2012), 3311--3373.

  \bibitem[DHW]{DHW} K. Dekimpe, M. Hartl, and S. Wauters,
  \emph{A seven-term exact sequence for the cohomology of a group extension},
  J. Algebra, Volume 369, (2012), 70--95.

  \bibitem[E]{E} M. Entov,
	\textit{Quasi-morphisms and quasi-states in symplectic topology},
	The Proceedings of the International Congress of Mathematicians (Seoul, 2014).

\bibitem[EP03]{EP03} M. Entov and L. Polterovich,
\textit{Calabi quasimorphism and quantum homology},
Int.\ Math.\ Res.\ Not. \textbf{30} (2003), 1635--1676.

  \bibitem[EP06]{EP06} M.~Entov and L.~Polterovich,
  \textit{Quasi-states and symplectic intersections},
  Comment. Math. Helv., \textbf{81} (1) (2006), 75--99.

\bibitem[EP09]{EP09} M. Entov and L. Polterovich,
	\textit{Rigid subsets of symplectic manifolds},
	Compos.\ Math.\ \textbf{145} (2009), no.~3, 773--826.


\bibitem[EPP]{EPP} M. Entov, L. Polterovich and P. Py,
	\textit{On continuity of quasimorphisms for symplectic maps},
	in Perspectives in Analysis, Geometry, and Topology, eds.\ I. Itenberg, B. J\"oricke and M. Passare (Birkh\"auser/Springer, 2012), 169--197.

\bibitem[Fa]{Fa}
{A. Fathi},
\textit{Structure of the group of homeomorphisms preserving a good measure on a compact manifold},
{Ann. Sci. $\acute{\mathrm{E}}$cole Norm. Sup. (4) },
 \textbf{13}(1)  (1980), 45--93.

 \bibitem[Fr]{Fr}
 {R. Frigerio},
 \textit{Bounded Cohomology of Discrete Groups},
 {Mathematical Surveys and Monographs 227}, Amer. Math. Soc., (2017).


	
\bibitem[FOOO]{FOOO}
K. Fukaya, Y.-G. Oh, H. Ohta and K. Ono,
\textit{Spectral invariants with bulk, quasimorphisms and Lagrangian Floer theory},
Mem.\ Amer.\ Math.\ Soc. \textbf{260} (2019) no.~1254.



  \bibitem[GG]{GG} J-M. Gambaudo and E. Ghys,
  \textit{Commutators and diffeomorphisms of surfaces},
  Ergodic Theory Dynam. Systems, \textbf{24}(5) (2004), 1591--1617.

  \bibitem[GL]{GL} E. A. Gorin, V. Ya. Lin,
  \textit{Algebraic equations with continuous coefficients and some problems of the algebraic theory of braids},
  Math. USSR Sbornik, \textbf{7} No.4 (1969), 569--596

  \bibitem[Gr]{Gr} M. Gromov,
  \textit{Volume and bounded cohomology}, Inst. Hautes Études Sci. Publ.Math., (56):5–99 (1983), 1982.

  \bibitem[I]{I} T.~Ishida,\
  \textit{Quasi-morphisms on the group of area-preserving diffeomorphisms of the 2-disk via braid groups},
  Proc. Amer. Math. Soc. Ser. B, \textbf{1} (2014), 43--51.

  \bibitem[Ka17]{Ka17} M.~Kawasaki,
  \textit{Bavard's duality theorem on conjugation-invariant norms},
  Pacific J. Math. \textbf{288} (2017), 157--170.

  \bibitem[Ka18]{Ka18} M.~Kawasaki,
  \textit{Extension problem of subset-controlled quasimorphism},
  Proc. Amer. Math. Soc. Ser. B, \textbf{5} (2018), 1--5.

  \bibitem[KO]{KO19} M. Kawasaki and R. Orita,
	\textit{Disjoint superheavy subsets and fragmentation norms},
	J. Topol.\ Anal., Online Ready, doi:10.1142/S179352532050017X.

  \bibitem[KM]{KM} D, Kotschick and S. Morita,
  \textit{Characteristic classes of foliated surface bundles with area-preserving holonomy},
  J. Differential Geom. 75 (2007), no. 2, 273--302.

\bibitem[MT]{MT} K. ~Mann and B.~Tshishiku, \textit{Realization problems for diffeomorphism groups},
Breadth in contemporary topology, Proc. Sympos. Pure Math., \textbf{102} (2019), 131--156.


  \bibitem[MS]{MS}
{D. McDuff and D. Salamon},
\textit{Introduction to symplectic topology},
{Oxford Mathematical Monographs. The Clarendon Press, Oxford University Press, New York},
 (1998).


\bibitem[MVZ]{MVZ}
{A. Monzner, N. Vichery and F. Zapolsky},
	\textit{Partial quasimorphisms and quasistates on cotangent bundles, and symplectic homogenization},
	J. Mod.\ Dyn., \textbf{6} (2012) no.~2 205--249.


 \bibitem[OM]{OM}
{Y. -G. Oh and S. M$\ddot{\mathrm{u}}$ller},
{\em  The group of Hamiltonian homeomorphisms and $C\sp 0$-symplectic topology},
{ J. Symplectic Geom. },
 \textbf{5} (2) (2007),  167--219.



  \bibitem[P]{P01}
{L. Polterovich},
\textit{The geometry of the group of symplectic diffeomorphisms},
{Lectures in Mathematics ETH Z$\ddot{\mathrm{u}}$rich. Birk$\ddot{\mathrm{a}}$user Verlag, Basel},
 (2001).

  \bibitem[PR]{PR}
{L. Polterovich and D. Rosen},
\textit{Function theory on symplectic manifolds},
{CRM Monograph Series, 34. American Mathematical Society, Providence, RI},
 (2014).


\bibitem[Py]{Py06}
{P. Py},
	\textit{Quasi-morphismes et invariant de Calabi},
	Ann.\ Sci.\ \'Ecole Norm.\ Sup. \textbf{4} (2006) no.~1 177--195.


\bibitem[R]{R}
{M. Rosenberg},
\textit{Py-Calabi quasi-morphisms and quasi-states on orientable surfaces of higher genus},
Isr.\ J. Math., \textbf{180} (2010) 163--188.

  \bibitem[Sh]{Sh} {A.~I.~Shtern}, \textit{Extension of pseudocharacters from normal subgroups}, Proc. Jangjeon Math. Soc., \textbf{18} (2015), 427--433.


\bibitem[vN]{vN}
{J. von Neumann},
\textit{Zur allgemeinen theorie des masses},
Fund. Math.
 \textbf{13} (1929) 73--116.




\end{thebibliography}
\end{document}